\newcommand{\Proj}{\operatorname{Proj}}
\newcommand{\bA}{\mathbb{A}}
\newcommand{\bG}{\mathbb{G}}
\newcommand{\bL}{\mathbb{L}}
\newcommand{\bP}{\mathbb{P}}
\newcommand{\bQ}{\mathbb{Q}}
\newcommand{\bZ}{\mathbb{Z}}
\newcommand{\oH}{\operatorname{H}}
\newcommand{\sF}{\mathscr{F}}
\newcommand{\cB}{\mathcal{B}}
\newcommand{\cE}{\mathcal{E}}
\newcommand{\cG}{\mathcal{G}}
\newcommand{\cI}{\mathcal{I}}
\newcommand{\cL}{\mathcal{L}}
\newcommand{\cM}{\mathcal{M}}
\newcommand{\cO}{\mathcal{O}}
\newcommand{\cP}{\mathcal{P}}
\newcommand{\cS}{\mathcal{S}}
\newcommand{\cU}{\mathcal{U}}
\newcommand{\cW}{\mathcal{W}}
\newcommand{\cX}{\mathcal{X}}
\newcommand{\spec}{\operatorname{Spec}}
\newcommand{\Sym}{\operatorname{Sym}}
\newcommand{\GL}{\operatorname{GL}}
\newcommand{\PGL}{\operatorname{PGL}}
\newcommand{\SL}{\operatorname{SL}}
\newcommand{\im}{\operatorname{im}}
\newcommand{\bmu}{\bm{\mu}}
\newcommand{\ch}{\operatorname{CH}}
\newcommand{\pr}{{\rm pr}}
\newtheorem{theorem}{Theorem}[section]
\newtheorem*{theorem-no-num}{Theorem}
\newtheorem{Lemma}[theorem]{Lemma}
\newtheorem*{Oss'}{Observation}
\newtheorem{Cor}[theorem]{Corollary}
\newtheorem{Prop}[theorem]{Proposition}
\theoremstyle{definition}
\newtheorem{Def}[theorem]{Definition}
\newtheorem{Remark}[theorem]{Remark}
\title{The integral Chow rings of moduli of Weierstrass fibrations}
\author[S. Canning]{Samir Canning}
    \address[S. Canning]{University of California San Diego}
    \email{srcannin@ucsd.edu}
\author[A. Di Lorenzo]{Andrea Di Lorenzo}
	\address[A. Di Lorenzo]{Humboldt Universit\"{a}t zu Berlin, Germany}
	\email{andrea.dilorenzo@hu-berlin.de}
\author[G. Inchiostro]{Giovanni Inchiostro}
    \address[G. Inchiostro]{University of Washington}
    \email{ginchios@uw.edu}
\begin{document}
\begin{abstract}
 We compute the Chow rings with integral coefficients of moduli stacks of minimal Weierstrass fibrations over the projective line. For each integer $N\geq 1$, there is a moduli stack $\cW^{\mathrm{min}}_N$ parametrizing minimal Weierstrass fibrations with fundamental invariant $N$. Following work of Miranda and Park--Schmitt, we give a quotient stack presentation for each $\cW^{\mathrm{min}}_N$. Using these presentations and equivariant intersection theory, we determine a complete set of generators and relations for each of the Chow rings. For the cases $N=1$ (respectively, $N=2$), parametrizing rational (respectively, K3) elliptic surfaces, we give a more explicit computation of the relations.  
\end{abstract}

\maketitle

\section{Introduction}
The study of the Chow rings of moduli spaces has played a central role in algebraic geometry ever since Mumford's introduction of an intersection product for the moduli space of curves $\cM_g$ and its compactification by stable curves. Mumford's intersection product requires the use of \emph{rational coefficients}, but Totaro \cite{Tot} and Edidin--Graham \cite{EG} developed an intersection theory for quotient stacks that works with \emph{integral coefficients}. Many moduli stacks of interest in algebraic geometry, including the moduli stacks of curves, are quotient stacks. 

Chow rings with integral coefficients are often quite difficult to compute, but in turn they have a much richer structure than their rational counterparts: for instance, rational Chow rings of moduli of hyperelliptic curves are trivial, but the integral ones are not (see \cites{Vis, EdFu, DL}). 

Only a few examples have been computed in full for moduli stacks of curves $\cM_{g,n}$ and $\overline{\cM}_{g,n}$ with $g$ and $n$ small (see \cites{DLFV, Lar, DLV, DLPV, Inc}), and even less is known for moduli stacks parametrizing higher dimensional varieties.

In this paper, we study \emph{integral} Chow rings of certain moduli stacks of surfaces that we denote $\cW^{\min}_N$, indexed by an integer $N\geq 1$. The stacks $\cW^{\min}_N$ parametrize surfaces called \emph{minimal Weierstrass fibrations} over $\bP^1$. The arithmetic and geometry of moduli spaces of minimal Weierstrass fibrations over $\bP^1$ has already been the subject of investigation of several works (see for instance \cites{Mir, HP, PS, CK}). Moreover, the stack $\cW^{\min}_2$ is of particular interest, as it can be regarded as the moduli stack of elliptic K3 surfaces with a section (equivalently, K3 surfaces polarized by a hyperbolic lattice).

Minimal Weierstrass fibrations over $\bP^1$ are flat, proper morphisms $p:X\rightarrow \bP^1$ together with a section $s:\bP^1\rightarrow X$ satisfying the following conditions:
\begin{enumerate}
    \item $X$ is normal, irreducible, with at most ADE singularities;
    \item every fiber of $p$ is isomorphic to an elliptic curve, a rational curve with a node, or a rational curve with a cusp;
    \item the section does not intersect the singular point of any of the fibers.
\end{enumerate}
These fibrations arise naturally from contracting the components of the fibers of a smooth elliptic surface over $\bP^1$ that do not meet the section.
Associated to a minimal Weierstrass fibration is a fundamental invariant $N=\deg (R^1p_*\cO_X)^{\vee}\geq 0$. For each $N\geq 1$, we consider moduli stacks $\cW^{\min}_N$ parametrizing minimal Weierstrass fibrations with fundamental invariant $N$. 

Our main result is the following. For a more precise formulation, see \Cref{thm:main}.
\begin{theorem}\label{main}
Suppose that the ground field has characteristic $\neq 2,3$ and let $N\geq 1$ be an integer. Then
\begin{enumerate}
    \item for $N$ odd, we have
    \[ \ch^*(\cW^{\min}_N)\simeq \bZ[c_1,c_2]/I_N \]
    where the generators are Chern classes of a certain rank two vector bundle $\cE_N$ and the ideal of relations $I_N$ is generated by $\binom{N+2}{2}$ relations, of which one has degree $8N+1$ and the others have degree $9k+m$ for $1\leq k\leq N$ and $0\leq m\leq k$. Explicit formulas for these relations are given in (\ref{eq:relations D2 for GL2}).\\
    \item for $N$ even, we have
    \[ \ch^*(\cW^{\min}_N)\simeq \bZ[\tau_1,c_2,c_3]/(2c_3,I_N) \]
    where the generators are Chern classes of certain vector bundles $\cL_N$ and $\cE_N$ and the ideal of relations $I_N$ is generated by $\binom{N+2}{2}$ relations, of which one has degree $8N+1$ and the others have degree $9k+m$ for $1\leq k\leq N$ and $0\leq m\leq k$. Explicit formulas for these relations are given in (\ref{eq:relations D2 for PGL2 k even}) and (\ref{eq:relations D2 for PGL2 k odd}).
\end{enumerate}
\end{theorem}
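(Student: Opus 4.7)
The plan is to combine the quotient stack presentation $\cW^{\min}_N\simeq [U_N/G_N]$ of Miranda and Park--Schmitt \cites{Mir,PS} with Edidin--Graham equivariant intersection theory. Here the structure group $G_N$ is $\GL_2$ when $N$ is odd and a group with $\ch^*(BG_N)=\bZ[\tau_1,c_2,c_3]/(2c_3)$ when $N$ is even, and $U_N$ is the open $G_N$-invariant subset, of the affine representation
\[ A_N := H^0(\bP^1,\sO(4N))\oplus H^0(\bP^1,\sO(6N)), \]
consisting of Weierstrass data $(a,b)$ that define a minimal Weierstrass fibration with generically nonvanishing discriminant. Because $A_N$ is equivariantly contractible, $\ch^*_{G_N}(A_N)=\ch^*(BG_N)$ is exactly the free polynomial ring in the generators predicted by the theorem; equivariant excision then gives
\[
\ch^*(\cW^{\min}_N)=\ch^*_{G_N}(U_N)=\ch^*(BG_N)/I_N,
\]
where $I_N$ is generated by the equivariant fundamental classes of the irreducible components of the complement $Z_N:=A_N\setminus U_N$.

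The next step is to describe $Z_N$ explicitly. It decomposes into two types of components. The first is the discriminant locus
\[ Z^\Delta := \{(a,b)\in A_N : 4a^3+27b^2\equiv 0\text{ on }\bP^1\}, \]
which is the generically-injective image of the $G_N$-equivariant map $H^0(\bP^1,\sO(2N))\to A_N$, $c\mapsto (-3c^2,2c^3)$, and therefore has codimension $(10N+2)-(2N+1)=8N+1$. The other components $Z_{k,m}$ are indexed by pairs $(k,m)$ with $1\leq k\leq N$ and $0\leq m\leq k$: geometrically $Z_{k,m}$ is the closure of the locus of pairs $(a,b)$ non-minimal along an effective divisor of degree $k$ on $\bP^1$, together with $m$ units of higher-order vanishing of $a$ or $b$ along this divisor. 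A dimension count via the natural parameterization of $Z_{k,m}$ by a bundle over $\Sym^k\bP^1$ yields codimension $9k+m$, so altogether $1+\sum_{k=1}^N(k+1)=\binom{N+2}{2}$ generators of $I_N$, matching the predicted count.

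Finally, each of these fundamental classes has to be computed as an explicit polynomial in $c_1,c_2$ (respectively $\tau_1,c_2,c_3$). The class $[Z^\Delta]$ is handled by a direct pushforward along the Veronese-type parameterization above. For each $[Z_{k,m}]$, the projection formula, applied to the equivariant parameterization, reduces the computation to a combination of Segre-class calculations on $\Sym^k\bP^1=\bP(\Sym^k V^*)$ together with the equivariant Euler class of the residual Weierstrass bundle of Weierstrass data lying outside the prescribed non-minimality divisor. The main obstacle is organizing the combinatorics of these pushforwards as $(k,m)$ varies so that the resulting polynomials take the compact closed form asserted in the theorem; this is where the splitting principle and the explicit $G_N$-module structure of $\Sym^k V^*$ and its subrepresentations enter, and where the parity of $N$ forces the distinction between the $\GL_2$ presentation and the presentation featuring the class $\tau_1$ and the relation $2c_3=0$. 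For the special cases $N=1$ (rational elliptic surfaces) and $N=2$ (K3 surfaces), the number of strata is small enough that all pushforwards can be carried out in closed form, accounting for the more explicit descriptions promised in those cases.
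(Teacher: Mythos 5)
Your overall framework---quotient presentation, $\ch^*_{G_N}(A_N)=\ch^*(\cB G_N)$, excision against the complement---matches the paper, and your identification of the discriminant component $Z^\Delta$ with its codimension $8N+1$ and the Veronese-type envelope $c\mapsto(-3c^2,2c^3)$ is exactly right. But there is a genuine gap at the heart of the argument: you assert that $I_N$ is generated by the equivariant fundamental classes of the \emph{irreducible components} of $Z_N$, and you then manufacture $\binom{N+2}{2}-1$ components $Z_{k,m}$ of codimension $9k+m$ to account for the count. Neither claim is correct. The non-minimality locus $\Delta^2_N$ is a single irreducible closed subset of codimension $9$ (the closure of the locus where non-minimality occurs at one point), so $Z_N$ has only two irreducible components, and their two fundamental classes do not generate $I_N$. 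The loci you describe as ``$m$ units of higher-order vanishing'' are not what produces the degree $9k+m$ relations: the nested strata $\Delta^2_{N,k}$ (non-minimality along a degree-$k$ divisor) have codimensions $9, 18, \ldots, 9N$ only, and there is no geometric stratum of codimension $9k+m$ for $0<m\le k$.

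The correct mechanism, which your proposal is missing, is that the image of $\ch_*(\Delta^2_N)\to\ch_*(A_N)$ is generated by pushforwards of \emph{all} classes supported on $\Delta^2_N$, not just its fundamental class. One stratifies $\Delta^2_N$ by the strata $\Delta^2_{N,k}$ and builds for each an equivariant Chow envelope $Z_k$, a vector subbundle of $A_N\times\bP V_k$ over $\bP V_k$ of codimension $10k$; then the image of $p_{k*}$ is the ideal generated by $\pr_{1*}([Z_k]\cdot\pr_2^*\eta)$ as $\eta$ runs over the $k+1$ module generators of $\ch^*([\bP V_k/G_N])$ over $\ch^*(\cB G_N)$. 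It is the degree of $\eta$ (from $0$ to $k$), added to $10k-k=9k$, that produces the degrees $9k+m$, and the count $1+\sum_{k=1}^N(k+1)=\binom{N+2}{2}$ comes from these module generators, not from components. Relatedly, your appeal to ``Segre-class calculations on $\bP(\Sym^kV^*)$'' tacitly assumes the projective bundle formula, which fails for $N$ even and $k$ odd because the $\PGL_2$-action on $\bP V_k$ does not lift to $V_k$; that case requires a separate computation of $\ch^*([\bP V_k/\PGL_2])$ and of the pushforward to $\cB\PGL_2$ (and, similarly, the class of $Z^\Delta$ for $N$ even cannot be obtained by a naive localization because $\ch^*(\cB(\PGL_2\times\bG_m))$ has zero divisors). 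Without these ingredients the proof does not go through.
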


Perhaps the most interesting cases are when $N=1$ and $N=2$. Minimal Weierstrass fibrations with fundamental invariant $1$ are rational. They arise from the elliptic fibrations obtained by blowing up the base points of a pencil of cubics in $\bP^2$. Moduli spaces of rational elliptic fibrations are well studied, and in particular are closely related to several other interesting moduli problems \cite{Vakil}.

Minimal Weierstrass fibrations with fundamental invariant $2$ come from elliptic K3 surfaces. The intersection theory with rational coefficients of moduli spaces of K3 surfaces has been the subject of much recent research and is expected to behave analogously to that of the moduli space of curves \cite{CK, MOP, PY}. 

Specializing our Theorem \ref{main} to $N=1,2$, we obtain the following completely explicit result.
\begin{theorem}\label{explicit}
Suppose that the ground field has characteristic $\neq 2,3$. Then
\begin{enumerate}
    \item the integral Chow ring of the moduli stack $\cW^{\min}_1$ of rational elliptic surfaces with a section is
    \[ \bZ[c_1,c_2]/(6c_1c_2r_6,c_1^3r_6,c_1^2c_2r_6) \]
    where the generators are Chern classes of a certain rank two vector bundle $\cE_1$ and $$r_6=576(30c_1^6+151c_1^4c_2+196c_1^2c_2^2+64c_2^3);$$
    \item the integral Chow ring of the moduli stack $\cW^{\min}_2$ of elliptic K3 surfaces with a section is
    \[ \bZ[\tau_1,c_2,c_3]/(2c_3,r_{9},r_{10},r_{18},r_{19}) \]
    where the generators are Chern classes of certain vector bundles $\cL_2$ and $\cE_2$ and
   \begin{align*}
    r_9=&1152 (691 c_2^4 \tau_1 - 38005 c_2^3 \tau_1^3 + 309568 c_2^2 \tau_1^5 - 
   497520 c_2 \tau_1^7 + 124416 \tau_1^9), \\
   r_{10}=&1152 (30 c_2^5 - 6811 c_2^4 \tau_1^2 + 133495 c_2^3 \tau_1^4 - 
   481528 c_2^2 \tau_1^6 + 327600 c_2 \tau_1^8 - 20736 \tau_1^{10}), \\
   r_{18}=&1152 c_2^5 (108314154642930 c_2^4 + 1045672 c_2^3 \tau_1^2 - 
   89483 c_2^2 \tau_1^4 + 35 c_2 \tau_1^6 - 4 \tau_1^8),\\
   r_{19}=&2304 c_2^6 \tau_1 (118203201 c_2^3 + 180502 c_2^2 \tau_1^2 - 7 c_2 \tau_1^4 + 4 \tau_1^6).
\end{align*}
\end{enumerate}
\end{theorem}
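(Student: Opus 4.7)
The plan is to deduce both parts as explicit specializations of Theorem~\ref{main}, whose general relation formulas are given by (\ref{eq:relations D2 for GL2}) for odd $N$ and by (\ref{eq:relations D2 for PGL2 k even}) and (\ref{eq:relations D2 for PGL2 k odd}) for even $N$. For $N=1$ these produce $\binom{3}{2}=3$ relations of degrees $9, 9, 10$ in $\bZ[c_1,c_2]$; for $N=2$ they produce $\binom{4}{2}=6$ relations of degrees $9, 10, 17, 18, 19, 20$ in $\bZ[\tau_1, c_2, c_3]$. Both parts of Theorem~\ref{explicit} will then follow by direct substitution into these formulas and algebraic simplification of the resulting ideals.

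For part~(1), I would compute the three polynomials of degrees $9, 9, 10$ produced by (\ref{eq:relations D2 for GL2}) with $N=1$, and observe that each of them is divisible by the common sextic factor $r_6 = 576(30c_1^6 + 151c_1^4 c_2 + 196c_1^2 c_2^2 + 64 c_2^3)$. Pulling this factor out, the three generators of $I_1$ reduce to $6c_1 c_2\, r_6$, $c_1^3\, r_6$, and $c_1^2 c_2\, r_6$, yielding the stated presentation.

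For part~(2), substituting $N=2$ into (\ref{eq:relations D2 for PGL2 k even}) and (\ref{eq:relations D2 for PGL2 k odd}) produces six explicit polynomials in $\tau_1, c_2, c_3$. The four of degrees $9, 10, 18, 19$, after reducing modulo $2c_3$, should coincide (up to signs) with the polynomials $r_9, r_{10}, r_{18}, r_{19}$ in the statement. The remaining task, and the main obstacle, is to show that the relations of degrees $17$ and $20$ are redundant, i.e.\ that they lie in the ideal $(2c_3, r_9, r_{10}, r_{18}, r_{19})$ of $\bZ[\tau_1,c_2,c_3]$. The size of the integer coefficients involved (for example the factor $108314154642930$ appearing in $r_{18}$) suggests that the syzygies witnessing this will not be small, so the verification is best carried out as a Gr\"obner basis computation in $\bZ[\tau_1, c_2, c_3]/(2c_3)$ using a computer algebra system such as Macaulay2, which both confirms the ideal equality and produces explicit expressions for the degree~$17$ and degree~$20$ relations in terms of the kept generators.
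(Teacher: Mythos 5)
Your overall strategy---specialize Theorem \ref{main} to $N=1,2$, evaluate the relation formulas explicitly, and simplify the resulting ideal (by hand for $N=1$, by computer for $N=2$)---is the paper's strategy. But there is a concrete gap in how you account for the relations. The formulas (\ref{eq:relations D2 for GL2}), (\ref{eq:relations D2 for PGL2 k even}) and (\ref{eq:relations D2 for PGL2 k odd}) produce only $\binom{N+2}{2}-1$ relations, namely the classes $f_{k,m}$ (resp.\ $g_{k,m'}$) of degrees $9k+m$ coming from the excision of $\Delta^2_N$. The remaining relation, of degree $8N+1$, is the fundamental class $[\Delta^1_N]_{G_N}$ of the locus $4A^3+27B^2=0$, and it is computed by a completely different method, namely equivariant localization along the envelope $P\mapsto(-3P^2,2P^3)$ (Propositions \ref{prop:class D1 GL2} and \ref{prop:class D1 PGL}). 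For $N=1$ this is not mere bookkeeping: the two relations that (\ref{eq:relations D2 for GL2}) actually gives are $f_{1,0}=-c_1^3 r_6$ and $f_{1,1}=-c_1^2c_2 r_6$, and the third generator $6c_1c_2 r_6$ in your (correct) final answer is precisely $[\Delta_1^1]_{\GL_2}=-3456\,c_1c_2(30c_1^6+\cdots)$; it does not lie in the ideal $(c_1^3r_6,\,c_1^2c_2r_6)$ (in degree $9$ that ideal contains only integer multiples of $c_1^3r_6$), so evaluating only the displayed formulas yields a strictly smaller ideal.

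For $N=2$ the same omission hides the hardest step. The degree-$17$ relation you propose to discard by a Gr\"obner computation is $[\Delta_2^1]_{\PGL_2\times\bG_m}$, and before one can check that it is redundant one must compute it; this cannot be done by naively applying the localization formula over $\cB(\PGL_2\times\bG_m)$ because of the $2$-torsion in that Chow ring. The paper (Lemma \ref{lemma:Delta12}) first determines the class modulo $c_3$ by pulling back to $\cB(\SL_2\times\bG_m)$, and then eliminates the residual ambiguity of the form $c_3\eta$ by a separate mod-$2$ argument with the $\GL_3\times\bG_m$-counterpart bundles $\overline{W}$ on $\bP^5$, showing that the relevant quotient of top Chern classes is divisible by the hyperplane class $h$ and hence restricts to zero. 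Only after this does a Mathematica check show that the degree-$17$ class lies in the ideal generated by $g_{1,0}$ and $g_{1,1}$, while the degree-$20$ relation is redundant for the trivial reason that $g_{2,2}=-c_2\,g_{2,0}$. A smaller inaccuracy: $g_{2,0}$ and $g_{2,1}$ do not coincide with $r_{18}$ and $r_{19}$ up to sign; they agree only after reduction modulo the ideal generated by $r_9$ and $r_{10}$, which your proposed Gr\"obner computation would of course detect.
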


When $N\geq 2$, the rational Chow rings $\ch^*(\cW^{\min}_N)\otimes \bQ$ have been computed by the first author and Kong \cite{CK}: in particular, they proved that only $r_{9}$ and $r_{10}$ are needed in order to generate the ideal of relations \emph{with rational coefficients}. This implies that there are many torsion classes in $\ch^*(\cW^{\min}_2)$, so the theory with integral coefficients is genuinely different from that with rational coefficients and contains much more information.

\subsection*{Structure of the paper}
In Section \ref{moduli}, we construct the moduli stacks $\cW^{\min}_N$ as quotient stacks, following the work of Miranda \cite{Mir} who constructed coarse spaces for $\cW^{\min}_N$ and Park--Schmitt \cite{PS}, who constructed $\cW^{\min}_N$ as quotients of a weighted projective stack by $\PGL_2$. Our approach is slightly different from that of Park--Schmitt, but we will show that the two approaches coincide. 

In Section \ref{equivint}, we discuss the equivariant intersection theory of projective spaces, which is a key tool in the proof of Theorem \ref{main}. In particular, we obtain generators for $\ch^*(\cW^{\min}_N)$. 

In Sections \ref{Delta1} and \ref{Delta2}, we compute relations among the generators of the Chow ring that result from excising the locus of non-minimal Weierstrass fibrations, finishing the proof of Theorem \ref{main}. 

Finally, in Section \ref{N12} we make explicit calculations of the relations in the cases $N=1$ and $N=2$, proving Theorem \ref{explicit}.

\subsection*{Acknowledgments}
We are thankful to Jarod Alper, Elham Izadi, Bochao Kong, Hannah Larson, Johannes Schmitt, and Angelo Vistoli for helpful conversations. We especially thank the referee for the many comments and corrections improving the exposition. Part of this material is based upon work supported by the Swedish Research Council under grant no. 2016-06596 while the first and the second named authors were in residence at Institut Mittag-Leffler in Djursholm, Sweden during the fall of 2021. S.C. was partially supported by NSF--RTG grant DMS-1502651. 

\section{Moduli of minimal Weierstrass fibrations}\label{moduli}
In this Section, after recalling some basic definitions, we give in \Cref{thm:presentation WN full} a presentation as a quotient stack of $\cW^{\min}_N$, the moduli stack of minimal Weierstrass fibrations with fundamental invariant $N$. We also introduce some vector bundles $\cL_N$ and $\cE_N$ (see \Cref{def:vec N odd} and \Cref{def:vec N even}) which will be relevant for our computations.

We adopt the following convention: given a locally free sheaf $\cE$ over a scheme $X$, the associated vector bundle $E$ is given by $\underline{\spec}_{\cO_X}(\underline{\Sym}(\cE^\vee))$ and the projectivization of this vector bundle $\bP(E)$ is therefore $\underline{\Proj}_{\cO_X}(\underline{\Sym}(\cE^\vee))$. Observe that with this convention, given $\pi:\bP(E)\to X$, we have $\pi_*\cO_{\bP(E)}(1)\simeq \cE^\vee$. 

\subsection{Basic notation}\label{subsec:basic}
We collect here some basic notation that is used throughout the paper; in this section, we work over $\spec(\bZ)$.
Let $G_N:=\operatorname{Aut}(\mathbb{P}^1,\mathcal{O}(N))$ be the automorphism group of the pair $(\mathbb{P}^1,\mathcal{O}(N))$. More precisely, an automorphism $\phi=(\phi_0,\phi_1)$ consists of a pair where $\phi_0:\mathbb{P}^1\overset{\sim}{\longrightarrow}\mathbb{P}^1$ is an automorphism of $\mathbb{P}^1$ and $\phi_1:\mathcal{O}(N)\overset{\sim}{\longrightarrow}\phi_0^*\mathcal{O}(N)$ is an automorphism of line bundles.

We make the group $G_N$ act linearly on the $\bZ$-module $V^N_N:=\oH^0(\mathbb{P}^1,\mathcal{O}(N))$ as follows: given a global section $\sigma:\mathcal{O}\to\mathcal{O}(N)$, we define $\phi\cdot\sigma=(\phi_0,\phi_1)\cdot\sigma$ to be the composition
\[\cO \overset{{\rm can}}{\longrightarrow} \phi_0^*\cO \overset{\phi_0^*\sigma}{\longrightarrow} \phi_0^*\cO(N) \overset{\phi_1^{-1}}{\longrightarrow} \cO(N).  \]
Given any positive integer $r$, we have a surjective homomorphism
\[ p_{Nr}^{N}:G_N \longrightarrow G_{Nr},\quad \phi=(\phi_0,\phi_1) \longmapsto (\phi_0,\phi_1^{\otimes r}).\]
We can use this homomorphism to define an action of $G_N$ on $V^{Nr}_{Nr}$ as
\[ \sigma  \longmapsto (\phi_0,\phi_1^{\otimes r}) \cdot \sigma. \]
In what follows, we will denote this $G_N$-module as $V_{Nr}^{N}$. We will also use the notation $V^{N}_{Nr_1,Nr_2}$ to denote $V^N_{Nr_1}\oplus V^N_{Nr_2}$.

The $G_N$-modules that we just introduced can be made more explicit as follows. First, observe that $G_1\simeq \GL_2$, and that there is a surjective morphism
\[G_1 \simeq \GL_2 \longrightarrow G_N,\quad (\phi_0,\phi_1) \longmapsto (\phi_0,\phi_1^{\otimes N}) \]
whose kernel consists of the subgroup of roots of unity $\bmu_N\subset \GL_2$, embedded diagonally. The action of $G_N$ on $V^N_N$, after identifying $G_N$ with $\GL_2/\bmu_N$, can then be written as 
\[ [A] \cdot f((x,y)):=f(A^{-1}(x,y)).\]
Moreover, by \cite[Proposition 4.4]{AV04} we have isomorphisms 
\begin{align*} 
&\GL_2 \overset{\simeq}{\longrightarrow} \GL_2/\bmu_N,\quad A \longmapsto  [\det(A)^{-\frac{N-1}{2N}} A],& \text{for }N\text{ odd}, \\
&\bG_m\times\PGL_2 \overset{\simeq}{\longrightarrow} \GL_2/\bmu_N, \quad (\alpha,A) \longmapsto [\alpha^\frac{1}{N}\det(A)^{-\frac{1}{2}}A],& \text{for }N\text{ even}.
\end{align*}
We can use these isomorphisms to describe the $G_N$-modules $V^N_{Nr}$ as $\GL_2$-modules (resp. $\bG_m\times\PGL_2$) for $N$ odd (resp. for $N$ even). Denoting by $E$ the standard $\GL_2$-module, and by $L$ the standard $\bG_m$-module with trivial $\PGL_2$-action, we obtain:
\begin{align*}
    &V_{Nr}^{N} \simeq \det(E)^{r\frac{N-1}{2}} \otimes \Sym^{N}(E^{\vee}), &  \text{for }N\text{ odd}, \\
    &V_{Nr}^{N} \simeq  L^{-r}\otimes V_{Nr}^{Nr} \simeq L^{-r}\otimes \oH^0(\bP^1,\cO(Nr)), &  \text{for }N\text{ even},
\end{align*}
where in the last line we are endowing $\oH^0(\bP^1,\cO(Nr))$ with the trivial $\bG_m$-action and the obvious $\PGL_2$-action.



\subsection{Stacks of conics with sections}
In this section, we keep working over $\spec(\bZ)$. We start by introducing a stack, denoted $\sF^{\min}_N$, which admits a natural presentation as a quotient stack (\Cref{thm:presentation WN}). The reason for introducing this stack is that, once restricted to $\spec(\bZ[\frac{1}{6}])$, it will turn out to be isomorphic to $\cW^{\min}_N$, the moduli stack of minimal Weierstrass fibrations.
\begin{Def}\label{def Fmin}
We denote by $\sF^{\min}_N$ the following fibered category over $\mathfrak{S}ch/\bZ$, the category of schemes over $\spec(\bZ)$.

\begin{bf} Objects:\end{bf} The objects over a scheme $T$ are tuples $(f:\cP\to T, \cL, A, B)$ consisting of a flat, proper morphism of finite presentation $\cP\to T$ with geometric fibers isomorphic to $\bP^1$, a line bundle $\cL$ over $\cP$ of degree $N$ along each fiber of $f$, and two sections $A,B$ of $\oH^0(\cP,\cL^{\otimes 4})$ and $\oH^0(\cP,\cL^{\otimes 6})$ respectively. We require the sections $A$  and $B$ to satisfy the following two conditions:
\begin{enumerate}
    \item for each geometric point $s$ of $T$, the global section $4A^3+27B^2$ of $\cL^{\otimes 12}$ is not zero once restricted to $\cP_s$, and
    \item for each geometric point $s$ of $T$, there is no point $p$ of $\bP^1\simeq\cP_s$ such that $A_s$ (resp. $B_s$) vanishes in $p$ with order $\geq 4$ (resp. with order $\geq 6$).
\end{enumerate}

\begin{bf} Morphisms:\end{bf} A morphism $(f:\cP\to T, \cL, A, B)\to (f':\cP'\to T', \cL', A', B')$ consists of a morphism $T\to T'$, together with two isomorphisms $\phi:\cP\to \cP'\times_{T'}T$ and $\psi:\cL\to\phi^*\cL'$, such that $\psi$ sends $A$ (resp. $B$) to $A'$ (resp. $B'$).
\end{Def}
\begin{Def}\label{nonmimimal}
We define the $G_N$-invariant closed subscheme $\Delta_N$ in $V^N_{4N,6N}$ as the union of $\Delta^1_N$ and $\Delta^2_N$, where
\begin{itemize}
    \item the subscheme $\Delta_N^1$ is the locus of pairs $(A,B)$ such that $4A^3 + 27B^2 = 0$, and
    \item the subscheme $\Delta^2_N$ is the locus of pairs $(A,B)$ such that there exists a point $p\in\bP^1$ such that $A$ (resp. $B$) vanishes in $p$ with order $\geq 4$ (resp. with order $\geq 6$).
\end{itemize}
\end{Def}
The following Proposition gives a presentation of $\sF^{\min}_N$ as a quotient stack.
\begin{Prop}\label{thm:presentation WN}
There is an isomorphism $\sF^{\min}_N \cong [(V^N_{4N,6N}\smallsetminus \Delta_N)/G_N]$.
\end{Prop}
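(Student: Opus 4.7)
The plan is to establish the equivalence by constructing mutually quasi-inverse pseudo-functors and appealing to étale/fppf descent. I would begin with the standard two-step strategy for identifying a moduli stack with a quotient stack $[X/G]$: exhibit a $G$-torsor over the moduli stack whose total space parametrizes $X$-valued data, or equivalently, show that objects are fppf-locally trivial with automorphism group $G$ and sections varying in $X$.

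Concretely, given an object $(f:\cP\to T,\cL,A,B)$ of $\sF^{\min}_N$, I would form the sheaf $P_T$ over $T$ whose sections on $T'\to T$ are pairs $(\phi,\psi)$ where $\phi:\bP^1_{T'}\xrightarrow{\sim}\cP\times_T T'$ is an isomorphism of $T'$-schemes and $\psi:\phi^*\cL\xrightarrow{\sim}\cO_{\bP^1}(N)\otimes\cO_{T'}$ is an isomorphism of line bundles on $\bP^1_{T'}$. The group $G_N=\Aut(\bP^1,\cO(N))$ acts on the right on $P_T$ by precomposition, and this action is free and transitive on geometric fibers. The key technical step is to verify that $P_T$ is an fppf $G_N$-torsor: since $\cP\to T$ is flat and proper with geometric fibers $\bP^1$, it is a Brauer-Severi scheme of relative dimension one, and hence étale-locally isomorphic to $\bP^1_T$. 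After such a base change, $\cL$ restricts to a line bundle on $\bP^1_T$ of relative degree $N$; by cohomology and base change applied to $f_*(\cL\otimes f^*\cO_{\bP^1}(-N))$, we obtain $\cL\cong\cO_{\bP^1}(N)\boxtimes M$ for some line bundle $M$ on $T$, and $M$ can be trivialized on a further fppf cover, producing the required local trivialization.

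Once $P_T$ is identified as a torsor, the sections $A,B$ pull back through any trivialization to an element of $\oH^0(\bP^1,\cO(4N))\oplus\oH^0(\bP^1,\cO(6N))= V^N_{4N,6N}$, and this assignment is $G_N$-equivariant, yielding a morphism $P_T\to V^N_{4N,6N}$. Conditions $(1)$ and $(2)$ of \Cref{def Fmin} translate, pointwise on geometric fibers of $T$, exactly into the statement that this morphism avoids $\Delta^1_N$ and $\Delta^2_N$ respectively (\Cref{nonmimimal}), producing the desired object of the quotient stack. For the reverse functor, given a $G_N$-torsor $P\to T$ together with an equivariant map $P\to V^N_{4N,6N}\smallsetminus\Delta_N$, I would form the contracted products $\cP:=P\times^{G_N}\bP^1$ and $\cL:=P\times^{G_N}\cO(N)$ and descend the equivariant map to sections $A\in\oH^0(\cP,\cL^{\otimes 4})$ and $B\in\oH^0(\cP,\cL^{\otimes 6})$; avoidance of $\Delta_N$ gives back conditions $(1)$ and $(2)$. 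Morphisms correspond on both sides, since a morphism in $\sF^{\min}_N$ is precisely a compatible pair $(\phi,\psi)$, equivalently an isomorphism of $G_N$-torsors intertwining the maps to $V^N_{4N,6N}\smallsetminus\Delta_N$.

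I expect the main obstacle to be the fppf-local triviality argument for the pair $(\cP,\cL)$, particularly the descent step that reduces the general case to the trivial object $(\bP^1_T,\cO_{\bP^1}(N))$; this requires combining the étale-local triviality of Brauer-Severi curves with a separate descent for the line bundle, and requires some care when $N$ is even. Once triviality is in hand, matching the two formulations of the non-degeneracy conditions (relative over $T$ versus pointwise on $\bP^1$-fibers) is routine, and the remainder of the argument is formal $2$-categorical bookkeeping.
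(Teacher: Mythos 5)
Your proposal is correct and follows essentially the same route as the paper: both arguments reduce to showing that the pair $(\cP,\cL)$ is étale-locally isomorphic to $(\bP^1_T,\cO_{\bP^1}(N))$ (Severi--Brauer triviality plus cohomology and base change for the line bundle) and that $G_N$ represents $\underline{\Aut}_T(\bP^1_T,\cO_{\bP^1_T}(N))$, with the conditions of \Cref{def Fmin} matching the complement of $\Delta_N$ fiberwise. The only cosmetic difference is that you package the argument as the torsor of trivializations $P_T$ with contracted products, whereas the paper directly shows that $V^N_{4N,6N}\smallsetminus\Delta_N\to\sF^{\min}_N$ is a $G_N$-torsor; these are two sides of the same correspondence.
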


\begin{proof}
Our argument follows \cite[Theorem 4.1]{AV04}. It suffices to construct a map $(V^N_{4N,6N}\smallsetminus\Delta_N)\to \sF^{\min}_N$ which is a $G_N$-torsor.

The data of a map $T\to V^N_{4N,6N}$ is equivalent to a section of the projection $\pi_2:V^N_{4N,6N}\times T\to T$. Let  $p:\bP^1\times T\to T$ be the second projection. Since $\pi_2$ is affine, a section of $\pi_2$ induces a morphism
\[ \operatorname{Sym}_{\cO_T}^{\bullet}(p_*(\cO_{\bP^1\times T}(4N)\oplus \cO_{\bP^1\times T}(6N))^\vee) \to  \cO_T.\]
This is the same as a map $p_*(\cO_{\bP^1\times T}(4N)\oplus \cO_{\bP^1\times T}(6N))^\vee \to \cO_T$ that in turn is equivalent to a map $\cO_T\to p_*(\cO_{\bP^1\times T}(4N)\oplus \cO_{\bP^1\times T}(6N))$, namely a choice of a pair of sections $A\in \oH^0(T,p_*\cO_{\bP^1\times T}(4N))$ and $B\in \oH^0(T,p_*\cO_{\bP^1\times T}(6N))$, equivalently a pair of sections of $\oH^0(\bP^1\times T,\cO_{\bP^1\times T}(4N))$ and $ \oH^0(\bP^1\times T,\cO_{\bP^1\times T}(6N))$.

In particular, the data of a morphism $T\to V^N_{4N,6N}\smallsetminus\Delta_N$ is equivalent to the data of two sections $(A,B)$ of $\oH^0(\bP^1\times T,\cO_{\bP^1\times T}(4N))$ and $ \oH^0(\bP^1\times T,\cO_{\bP^1\times T}(6N))$ such that for each geometric point $s$ of $T$, the restrictions of $A$ and $B$ over $\bP^1_s$ do not verify the two conditions given in \Cref{nonmimimal}. 

There is a natural transformation $\Phi: (V^N_{4N,6N}\smallsetminus\Delta_N)\to \sF^{\min}_N$, that on objects is defined as follows. Given a map $T\to V^N_{4N,6N}\smallsetminus\Delta_N$, which corresponds to two sections $A,B$ as above, we can associate the object of $\sF^{\min}_N$ given by $(\bP^1\times T\to T, \cO_{\bP^1\times T}(N), A,B)$.

Let $\sigma:G_N\times (V^N_{4N,6N}\smallsetminus\Delta_N)\to (V^N_{4N,6N}\smallsetminus\Delta_N)$ be the map that defines the action of $G_N$ on $V^N_{4N,6N}\smallsetminus\Delta_N$, and denote $\pr_2:G_N\times (V^N_{4N,6N}\smallsetminus\Delta_N)\to (V^N_{4N,6N}\smallsetminus\Delta_N)$ the projection on the second factor.

We claim that $\Phi$ is a $G_N$-torsor. We need to show that:
\begin{enumerate}
    \item The two arrows $\Phi\circ \sigma$ and $ \Phi \circ \pr_2$ are isomorphic,
    \item For every scheme $T$ and every object $(f:\cP\to T, \cL, A, B)$ of $\sF^{\min}_N(T)$, there is an \'etale cover $T'\to T$ such that the pull-back $(f':\cP'\to T', \cL', A', B')$ is isomorphic to an object of $(V^N_{4N,6N}\smallsetminus\Delta_N)(T)$ (i.e. it is in the essential image of $\Phi$), and
    \item If $\alpha:= (f':\cP'\to T', \cL', A', B')$ is in the essential image of $\Phi$, the action of $G_N$ on its essential fiber (i.e. the pairs $(\beta, \gamma)$ consisting of an element $ \beta \in (V^N_{4N,6N}\smallsetminus\Delta_N)(T)$ and an isomorphism $\Phi(\beta)\to \alpha$) is simply transitive.
\end{enumerate}
To check point (1), we construct explicitly the isomorphism: for every $(\phi,A,B)$ in $G_N(T)\times (V_{4N,6N}^n\smallsetminus\Delta_N)$ we define the 2-morphism
\begin{align*}
    (\Phi\circ\pr_2)((\phi, A, B)) = (\bP^1_T\to T,\cO(N), A, B) \longmapsto (\bP^1_T \to T, \cO(N),\phi\cdot A, \phi\cdot B)= (\Phi\circ\sigma)((\phi, A, B)).
\end{align*} 
to be exactly $(\phi_0,\phi_1)$, where $\phi\cdot A$ (resp. $\phi\cdot B$) is the action introduced in \Cref{subsec:basic}.
To check (2), observe that $f:\cP\to T$ is a Severi-Brauer scheme \cite[Corollaire 8.3]{GroBr}, hence there is an \'etale cover $T'\to T$ and an isomorphism $\cP\times_T T' \cong \bP^1\times T'$ over $T'$. Then if we denote by $\cL'$ the pull-back of $\cL$ to $\bP^1\times T'$ we have two line bundles, $\cL'$ and $\cO_{\bP^1\times T'}(N)$ that are isomorphic along each fiber: in particular, for every point $s$ of $T$ we have $\oH^1(\bP^1\times\{s\},\cL'(-N)_s)=0$, hence \cite[Theorem III.12.11]{Har13} we obtain that $\cG:=\pr_{2*}(\cL'\otimes \cO_{\bP^1\times T'}(-N))$ is a line bundle, from which we immediately deduce that, up to replacing $T'$ with a covering that trivializes $\cG$, we can assume that $\cL'\cong \cO_{\bP^1\times T'}(N)$. This proves point (2).

To check point (3) it suffices to recall that the functor sending a scheme $T$ to $\underline{\operatorname{Aut}}_T(\bP^1_T,\cO_{\bP^1_T}(N))$ is represented by $G_N$ (see \cite[Proof of Theorem 4.1]{AV04}). Indeed, we need to check that:
\begin{itemize}
    \item The action of $G_N$ is transitive on the fibers of $(V^N_{4N,6N}\smallsetminus\Delta_N) \to \sF^{\min}_N$, and
    \item The action is simply transitive (this is analogous to the representability of $[(V^N_{4N,6N}\smallsetminus\Delta_N)/G_N]\to \sF^{\min}_N$).
\end{itemize}
To check the first bullet point, we need to check that if two objects of $\sF^{\min}_N(T)$ that belong to the image of $\Phi(T)$ are isomorphic, then there is an element of $G_N(T)$ which sends the first one to the second one. To check the second bullet point, we need to check that such an element is unique.
Both bullet points follow since $\underline{\operatorname{Aut}}_T(\bP^1_T,\cO_{\bP^1_T}(N))$ is represented by $G_N$. 
\end{proof}

\subsection{Moduli of Weierstrass fibrations}
In this section, we work over $\spec(\bZ[\frac{1}{6}])$. Let $\cW_N^{\min}$ be the moduli stack of minimal Weierstrass fibrations, as defined in \cite{PS}*{Section 4.2}. We will prove in \Cref{prop:WN iso FN} that $\cW^{\min}_N$ is isomorphic to the stack $\sF^{\min}_N$ that we introduced before. We start by recalling the relevant definitions from \emph{loc. cit.}.
\begin{Def}
    
\end{Def}
\begin{itemize}
    \item A \emph{Weierstrass fibration} over an algebraically closed field $k$ is a proper, flat morphism $f:X\to \bP^1_k$ with geometrically integral fibers from an integral scheme $X$ together with a section $s:\bP^1_k \to X$ such that every geometric fiber is either an elliptic curve, a rational curve with a node or a rational curve with a cusp, the generic fiber is smooth and the section $s(\bP^1_k)$ does not contain any singular point of the fibers.
    \item A Weierstrass fibration has \emph{degree} $N$ if the line bundle $(R^1f_*\cO_X)^{\vee}$ has degree $N$.
    \item A Weierstrass fibration is \emph{minimal} if it is a Weierstrass model of a smooth elliptic surface over $\bP^1_k$ with a section (see \cite[Section 1]{Miranda} for more details).
\end{itemize}
\begin{Def}
A \emph{family of minimal Weierstrass fibrations} of degree $N$ over a scheme $T$ is the data of:
\begin{enumerate}
    \item a flat, proper morphism of finite presentation $\cP\to T$ with geometric fibers isomorphic to $\bP^1$, and
    \item a flat, proper morphism of finite presentation $f:\cX\to \cP$ with a section $\cS\subseteq \cX$.
\end{enumerate}
We require that for every geometric point $p\in T$, the fiber $(\cX_p,\cS_p)\to \cP_p$ is a minimal Weierstrass fibration of degree $N$, and we refer the reader to \cite{Miranda} for a more detailed exposition on Weierstrass fibrations.
\end{Def}
Given two families $((\cX,\cS)\to \cP\to T)$ and $((\cX',\cS')\to \cP'\to T')$, a morphism from the latter to the former consists of a morphism $g:T'\to T$ and isomorphisms $\cX'\cong \cX\times_T T'$ and $\cP'\cong \cP\times_T T'$ which preserve the section and make the obvious square commutative.

It is shown in \cite[Theorem 1.2]{PS} that there is an algebraic stack, which we denote by $\cW^{\min}_N$, that parametrizes families of minimal Weierstrass fibrations. Our goal is to prove that $\cW^{\min}_N\cong \sF^{\min}_N$. We need the following preparatory Lemma, which is proved in \cite{Miranda}*{pages 22-24}.
\begin{Lemma}\label{lemma_miranda}
Let $((\cX,\cS)\xrightarrow{f} \cP\to T)$ be a family of minimal Weierstrass fibrations over $T$. Then:
\begin{itemize}
    \item $R^1f_*\cO_{\cX}$ is a line bundle, the dual of which will be denoted by $\cL$, 
    \item the inclusion $\cO_{\cX}(-S)\subset \cO_{\cX}$ induces an isomorphism $f_*\cO_{\cX}(\cS)\cong f_*\cO_{\cX}$, and
    \item for every $n\geq 2$ we have an exact sequence \[0\to f_*\cO_{\cX}((n-1)\cS)\to f_*\cO_{\cX}(n\cS) \to f_*\cO_{\cS}(n\cS) \to 0.\]
    Moreover, the sequence above splits and we have $f_*\cO_{\cX}(n\cS) = \cO_\cP\oplus \cL^{\otimes -2}\oplus ... \oplus \cL^{\otimes -n}$.
\end{itemize}
\end{Lemma}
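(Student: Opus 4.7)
The strategy is to reduce everything to fiberwise cohomology on Weierstrass cubics and then apply Grauert's theorem on cohomology and base change, together with relative duality and adjunction along the section $\cS$.

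On each geometric fiber $\cX_p$ — an irreducible Gorenstein curve of arithmetic genus one with trivial dualizing sheaf — Riemann--Roch and Serre duality yield $h^0(\cO_{\cX_p})=h^1(\cO_{\cX_p})=1$ and, for every $n\ge 1$, $h^0(\cO_{\cX_p}(n\, s(p)))=n$ while $h^1=0$, because $s(p)$ is a smooth point of $\cX_p$ and hence $\cO(n\, s(p))$ is a line bundle of positive degree. Since these dimensions are locally constant on $T$, Grauert's theorem implies that $R^1 f_*\cO_\cX$ is a line bundle on $\cP$, so the definition $\cL:=(R^1 f_*\cO_\cX)^\vee$ makes sense, and moreover $f_*\cO_\cX\simeq \cO_\cP$, $f_*\cO_\cX(n\cS)$ is locally free of rank $n$, and $R^1 f_*\cO_\cX(n\cS)=0$ for all $n\ge 1$. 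This settles the first bullet. For the second bullet, pushing forward the natural inclusion $\cO_\cX\hookrightarrow \cO_\cX(\cS)$ produces a map of line bundles on $\cP$ that is fiberwise nonzero (both fibers are the one-dimensional $\oH^0$), hence an isomorphism.

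For the exact sequence in the third bullet, the plan is to push forward the short exact sequence $0\to \cO_\cX((n-1)\cS)\to \cO_\cX(n\cS)\to \cO_\cS(n\cS)\to 0$; because $R^1 f_*\cO_\cX((n-1)\cS)=0$ for $n\ge 2$ by the previous step, the long exact sequence truncates to exactly the four-term sequence claimed. To identify $f_*\cO_\cS(n\cS)$ with $\cL^{-n}$, relative duality for the flat proper genus-one morphism $f$ gives $f_*\omega_{\cX/\cP}\simeq (R^1 f_*\cO_\cX)^\vee=\cL$. Since $\omega_{\cX_p}$ is trivial on each fiber, the evaluation map $s^*f_*\omega_{\cX/\cP}\xrightarrow{\sim} s^*\omega_{\cX/\cP}$ is an isomorphism, so $s^*\omega_{\cX/\cP}\simeq \cL$. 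Adjunction for $\cS\hookrightarrow \cX$ over $\cP$ then reads $\cO_\cS\simeq \omega_{\cS/\cP}\simeq s^*\omega_{\cX/\cP}\otimes \cO_\cS(\cS)$, which gives $\cO_\cS(\cS)\simeq \cL^{-1}$, and consequently $f_*\cO_\cS(n\cS)\simeq \cL^{-n}$ after identifying $\cS$ with $\cP$ via $f|_\cS$.

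The main obstacle is the splitting and the resulting decomposition, since the extension class in $\mathrm{Ext}^1(\cL^{-n}, f_*\cO_\cX((n-1)\cS))\cong \oH^1(\cP, f_*\cO_\cX((n-1)\cS)\otimes \cL^n)$ is not forced to vanish for purely formal reasons. I would handle it by constructing Weierstrass coordinates in families: the vanishing $R^1 f_*\cO_\cX(m\cS)=0$ for $m\ge 1$ implies that fiberwise generators of the quotients $\cL^{-2}$ and $\cL^{-3}$ can be lifted, locally on $\cP$, to sections $x\in f_*\cO_\cX(2\cS)$ and $y\in f_*\cO_\cX(3\cS)$. The monomials $1,x,y,x^2,xy,x^3,\ldots$ of the appropriate weighted degree then exhibit $f_*\cO_\cX(n\cS)$ as the direct sum $\cO_\cP\oplus \cL^{-2}\oplus\cdots\oplus \cL^{-n}$. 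To globalize, I would exploit that any two such local lifts of $x$ (respectively $y$) differ by a section of $\cO_\cP$ (respectively of $\cO_\cP\oplus \cL^{-2}$), i.e. by filtration-preserving automorphisms, so the local direct-sum decompositions patch into a global one. This gluing is the delicate part of the argument, but it is essentially bookkeeping built around the existence of a Weierstrass equation for $f$.
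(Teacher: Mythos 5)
Your treatment of the first three assertions is essentially sound and runs parallel to the paper's intended argument (the paper defers to Miranda, pp.~22--24, but the underlying computation is the same pushforward of $0\to\cO_\cX((n-1)\cS)\to\cO_\cX(n\cS)\to\cO_\cS(n\cS)\to 0$ combined with cohomology and base change). Two smaller remarks. First, Grauert's theorem requires a reduced base, and $T$ here is an arbitrary scheme, so you should instead invoke the full cohomology-and-base-change package: fiberwise $h^1=0$ for $\cO(n\cS)$ with $n\ge 1$ kills $R^1f_*$, and for $R^1f_*\cO_\cX$ one uses that it is the top direct image, hence commutes with base change, together with surjectivity of $f_*\cO_\cX\otimes k(p)\to \oH^0(\cX_p,\cO)$. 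Second, your identification $\cO_\cS(\cS)\simeq\cL^{-1}$ via relative duality and adjunction is correct but can be obtained more cheaply, as the paper does, by reading off $f_*\cO_\cS(\cS)\cong R^1f_*\cO_\cX$ from the four-term exact sequence in the case $n=1$, using that $f_*\cO_\cX\to f_*\cO_\cX(\cS)$ is an isomorphism.

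The genuine gap is the splitting. You correctly observe that the obstruction lives in $\operatorname{Ext}^1(\cL^{-n}, f_*\cO_\cX((n-1)\cS))$ and is not formally zero, but your proposed remedy does not remove it. Local lifts of generators of $\cL^{-2}$ and $\cL^{-3}$ always exist, and on overlaps any two local splittings differ by a filtration-preserving change; this is true for \emph{any} extension of locally free sheaves and is precisely the statement that the extension class is represented by a \v{C}ech $1$-cocycle. Whether the local splittings patch into a global one is exactly the question of whether that cocycle is a coboundary, i.e.\ whether the class in $\operatorname{Ext}^1$ vanishes, so the ``bookkeeping'' step is circular. The correct argument, and the one the paper intends, is to show that the $\operatorname{Ext}^1$ group actually vanishes: by induction one has $\operatorname{Ext}^1(\cL^{-n}, f_*\cO_\cX((n-1)\cS))\cong \oH^1(\cP,\cL^{n})\oplus\oH^1(\cP,\cL^{n-2})\oplus\cdots\oplus\oH^1(\cP,\cL)$, and each summand $\oH^1(\cP,\cL^{k})$ with $k\ge 1$ vanishes because $\cL$ has non-negative degree on the fibers of $\pi:\cP\to T$ (so $R^1\pi_*\cL^{k}=0$, by cohomology and base change on $\bP^1$) and one may take $T$ affine (so $\oH^1(T,\pi_*\cL^{k})=0$), whence the Leray spectral sequence gives the claim. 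Note that this is where minimality enters, through the non-negativity of $\deg\cL$ on fibers, and that the splitting is accordingly a statement local on $T$ (or valid for $T$ affine), which is all that is needed for the construction of the Weierstrass data later in the paper.
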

The following is just a relative version of the arguments in \cite[II.5]{Miranda}. We report them below for convenience of the reader. 

Consider $((\cX,\cS)\xrightarrow{\phi} \cP\to T)$ be a family of minimal Weierstrass fibrations over a scheme $T$. First, we choose a covering of $\cP$ which trivializes $\cL$, and we choose a generator $e_1$ for $\cL^{-1}$. In particular, $e_n:=e_1^{\otimes n}$ will be a generator for $\cL^{-n}$. From Lemma \ref{lemma_miranda} this covering also trivializes $\phi_*\cO_{\cX}(n\cS)$, and we choose an element $f$ of $\phi_*\cO_{\cX}(2\cS)$ (resp. $g$ of $\phi_*\cO_{\cX}(3\cS)$)
that via the projection to $\phi_*\cO_{\cS}(2\cS)$ (resp. $\phi_*\cO_{\cS}(3\cS)$) of Lemma \ref{lemma_miranda} maps to $e_2$ (resp. $e_3$). Then $g^2$ and $f^3$ are sections of $\phi_*\cO_{\cX}(6\cS)$, and $g^2 = f^3  + h$ where $h$ maps to 0 via the projection $\phi_*\cO_{\cX}(6\cS)\to \phi_*\cO_{\cS}(6\cS)$. 

Proceeding as in \cite[II.5]{Miranda} (i.e. completing the square and the cube), locally in $\cP$ there exists unique regular functions $a$ and $b$ such that we can (still locally) choose $f$ and $g$ with $g^2 = f^3 + af +b$. If we pick another trivialization $e'_1=\lambda e_1$ for $\cL^{-1}$, the regular functions $a$ and $b$ change into $a'=\lambda^{4} a$ and $b'=\lambda^{6} b$: in particular, we have that $a'\cdot e'_{-4}= a\cdot e_{-4}$ (resp. $b' \cdot e_{-6}' = b\cdot e_{-6}$), hence we obtain a well defined global section $A$ of $\cL^{\otimes 4}$ (resp. a global section $B$ of $\cL^{\otimes 6}$).

For every point $x$ in $T$, the smoothness of the generic fiber of $\cX_x\to\cP_x$ is equivalent to imposing that the global section $4A_x^3 + 27B_x^2$ is not zero.
Moreover, from \cite[Corollary 2.5]{Mir}, there is no point $p$ in a fiber of $\cP\to T$ where the order of vanishing of $A$, at $p$ is greater than 4 and the order of vanishing of $B$ is greater than 6 (as $\phi$ is a family of \emph{minimal} Weierstrass fibrations). Note that here, for $A$ and $B$ we intend the restriction of the sections to the fiber of $\cP\to T$ containing $p$.

Combining the previous paragraph with Lemma \ref{lemma_miranda}, we have
\begin{Cor}\label{Cor_ma_from_PS_stack_to_W_N}
Consider $((\cX,\cS)\xrightarrow{\phi} \cP\to T)$ a family of minimal Weierstrass fibrations over $T$. Then:
\begin{enumerate}
    \item The sheaf $\cL = (R^1\phi_*\cO_{\cX})^{\vee}$ is a line bundle,
    \item from the data above we can canonically construct two sections $A,B$ of $\oH^0(\cP,\cL^{\otimes 4})$ and $\oH^0(\cP,\cL^{\otimes 6})$,
    \item for every $x\in T$, the section $4A_x^3+27B_x^2$ is not zero on $\cP_x$, and
    \item for every $x\in T$, there is no point $y\in \cP_x$ such that the sections $A_x$ and $B_x$ of $\oH^0(\cP_x,\cL^{\otimes 4}_{|\cP_x})$ and $\oH^0(\cP_x,\cL^{\otimes 6}_{|\cP_x})$ vanish at $y$ with order $\ge 4$ and $\ge 6$, respectively.
\end{enumerate}
\end{Cor}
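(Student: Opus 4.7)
The plan is to follow the relative version of Miranda's construction that is sketched in the paragraphs immediately preceding the statement, taking each of the four items in turn. Point (1) follows directly from the first bullet of \Cref{lemma_miranda}, so no further work is required there. For point (2), I would work locally on $\cP$: choose a Zariski cover on which $\cL$ is trivialized by a section $e_1$ of $\cL^{-1}$, and set $e_n := e_1^{\otimes n}$. By \Cref{lemma_miranda}, after possibly shrinking, the sheaves $\phi_*\cO_{\cX}(n\cS)$ are also trivialized, and one can choose $f\in\phi_*\cO_\cX(2\cS)$ and $g\in\phi_*\cO_\cX(3\cS)$ mapping to $e_2$ and $e_3$ under the projection to $\phi_*\cO_\cS(n\cS)$. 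Since $\{1,f\}$ generates $\phi_*\cO_\cX(2\cS)$ and $\{1,f,g\}$ generates $\phi_*\cO_\cX(3\cS)$, the seven elements $\{1,f,g,f^2,fg,f^3,g^2\}$ of $\phi_*\cO_\cX(6\cS)$ must satisfy a linear relation.

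Because we work over $\spec(\bZ[\tfrac{1}{6}])$, one can complete the square in $g$ and the cube in $f$ to eliminate the coefficients of $fg$, $g$, $f^2$, and the non-unit coefficients in front of $g^2$ and $f^3$; after doing so the relation takes the standard form $g^2=f^3+af+b$ with $a,b\in\cO_\cP$ uniquely determined, exactly as in \cite[II.5]{Miranda}. The key observation is the transformation law: if the trivialization is replaced by $e_1'=\lambda e_1$, then the corresponding $f'$ and $g'$ satisfy $f'=\lambda^{-2}f$, $g'=\lambda^{-3}g$, and hence $a'=\lambda^4 a$, $b'=\lambda^6 b$. This says precisely that the local sections $a\cdot e_{-4}$ of $\cL^{\otimes 4}$ and $b\cdot e_{-6}$ of $\cL^{\otimes 6}$ are independent of the trivialization, so they glue to global sections $A$ and $B$, giving (2).

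For (3) and (4) it is enough to argue on geometric fibers $\cX_x\to\cP_x$, where the construction specializes to Miranda's absolute setting. On a geometric fiber the relation $g^2=f^3+af+b$ is the classical Weierstrass equation, whose generic fiber is smooth if and only if the discriminant $4a^3+27b^2$ is not identically zero, giving (3). For (4), the minimality hypothesis on $\phi$ means exactly that on each geometric fiber no point forces $A$ to vanish to order $\geq 4$ and $B$ to order $\geq 6$ simultaneously; this is the content of \cite[Corollary~2.5]{Mir}.

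The main subtlety is really confined to point (2): one must check that the local functions $a$ and $b$ are genuinely regular (not just rational), that the completion of square/cube can be performed over the base ring $\cO_\cP$ itself (here the assumption on the characteristic enters), and that the cocycle condition for the transformation $a\mapsto\lambda^4 a$, $b\mapsto\lambda^6 b$ holds on triple overlaps. Once this is verified, the remaining items are immediate consequences of the fiberwise Weierstrass picture.
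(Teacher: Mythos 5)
Your proposal follows essentially the same route as the paper: point (1) from the first bullet of \Cref{lemma_miranda}, point (2) by locally trivializing $\cL$, choosing $f$ and $g$ lifting $e_2$ and $e_3$, completing the square and cube over $\bZ[\tfrac{1}{6}]$ as in \cite[II.5]{Miranda}, and gluing via the transformation law $a'=\lambda^4a$, $b'=\lambda^6b$, with (3) and (4) reduced to the fiberwise statements and \cite[Corollary 2.5]{Mir}. One small internal inconsistency: since $f$ and $g$ project to $e_2=e_1^{\otimes 2}$ and $e_3=e_1^{\otimes 3}$, the change $e_1'=\lambda e_1$ forces $f'=\lambda^{2}f$ and $g'=\lambda^{3}g$ (modulo lower-order terms), not $\lambda^{-2}f$ and $\lambda^{-3}g$ as you wrote; your stated exponents would yield $a'=\lambda^{-4}a$, $b'=\lambda^{-6}b$, contradicting the (correct) law you then assert.
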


Corollary \ref{Cor_ma_from_PS_stack_to_W_N} gives a map $\cW^{\min}_N\to \sF^{\min}_N$.
We show that this map is an isomorphism, by producing an inverse. 

Given a family $(f:\cP\to T, \cL, A,B)$, let $E$ be the vector bundle associated to the the locally free sheaf $\cL^{-2}\oplus \cL^{-3}\oplus \cO_\cP$: then we can construct a family of Weierstrass fibrations by taking a closed subscheme of $\bP(E)$ as follows.

First consider a covering $\cU\to \cP$ which trivializes $\cL$, and let $s$ be the trivializing section of $\cL$. We can therefore write the pullback of $A$ (respectively $B$) as $a\cdot s^4$ (respectively $b\cdot s^6$). Then consider the closed subscheme of $\bP(E|_{\cU})$ given by those lines generated by $(x\cdot s^2,y\cdot s^3,z)$ with $x,y,z\in \cO_{\cU}$ such that $(y\cdot s^3)^2 z=(x\cdot s^2)^3+(a\cdot s^4)(x\cdot s^2)z^2+(b\cdot s^6)z^3$ (recall that with our convention we have $\bP(E)=\underline{\Proj}_{\cO_\cU}(\underline{\Sym}(\cL^{2}|_{\cU}\oplus\cL^{3}|_{\cU}\oplus\cO_{\cU}))$).

One can check that these closed subschemes descend to a closed subscheme $\cX\subseteq \bP(E)$. The map $\cX\to \cP$ has a section $\cS\subseteq \cX$, that over $\cU$ is given by $z = x\cdot s^2 = 0$. To check that this  is a family in $\cW^{\min}_N$ we need to check that when $T=\spec(k)$ for an algebraically closed field $k$, the resulting surface $\cX$ with section $\cS$ is a minimal Weierstrass fibration. The fact that it is a Weierstrass fibration follows from \cite[pg. 26]{Miranda}, whereas minimality follows from \cite[Corollary 2.5]{Mir}. We have proven the following.
\begin{Prop}\label{prop:WN iso FN}
We have an isomorphism $\cW_N^{\min}\simeq\sF^{\min}_N$.
\end{Prop}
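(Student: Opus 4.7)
The plan is to produce functors in both directions between $\cW^{\min}_N$ and $\sF^{\min}_N$ and verify that they are mutually quasi-inverse. The forward functor $\cW^{\min}_N \to \sF^{\min}_N$ is already furnished by \Cref{Cor_ma_from_PS_stack_to_W_N}: to a family $((\cX, \cS) \xrightarrow{\phi} \cP \to T)$ it assigns $(\cP \to T, \cL, A, B)$ with $\cL = (R^1\phi_*\cO_\cX)^\vee$ and $(A, B)$ the Weierstrass coefficients obtained by the local procedure of completing the square and the cube, and the defining conditions of \Cref{def Fmin} are verified in parts (3)--(4) of that corollary. What remains is to construct the inverse functor and to check both round-trips.

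For the inverse $\sF^{\min}_N \to \cW^{\min}_N$, I would start from an object $(\cP \to T, \cL, A, B)$, form the vector bundle $E$ associated to $\cL^{-2}\oplus \cL^{-3}\oplus \cO_\cP$, and cut out the closed subscheme $\cX \subseteq \bP(E)$ that, on a trivialization of $\cL$ with $A = a s^4$ and $B = b s^6$, is defined by $y^2 z = x^3 + a x z^2 + b z^3$, together with the section $\cS = \{z = x = 0\}$. The first step is to check that these local equations glue: under a change of trivialization $s' = \lambda s$, the coefficients transform as $(a', b') = (\lambda^4 a, \lambda^6 b)$ and the coordinates $(x, y, z)$ on $\bP(E)$ rescale with compensating weights, so the defining equation extends to a well-defined global section of a line bundle on $\bP(E)$. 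Flatness of $\cX \to \cP$ follows from the equation being a single non-zerodivisor of fixed multi-degree in the projectivization of a rank-three bundle, and the hypotheses in \Cref{def Fmin} combined with \cite{Miranda}*{pg. 26} and \cite{Mir}*{Corollary 2.5} ensure that each geometric fiber is a minimal Weierstrass fibration.

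Finally I would verify that both compositions are $2$-isomorphic to the identity. For $\sF^{\min}_N \to \cW^{\min}_N \to \sF^{\min}_N$, one computes directly using \Cref{lemma_miranda} applied to the constructed $\phi: \cX \to \cP$: the identification $(R^1\phi_*\cO_\cX)^\vee \simeq \cL$ is canonical, and running the Weierstrass procedure on the explicit equation $y^2 z = x^3 + a x z^2 + b z^3$ returns the original pair $(A, B)$. The composition $\cW^{\min}_N \to \sF^{\min}_N \to \cW^{\min}_N$ is where I expect the main obstacle: it amounts to showing that any family of minimal Weierstrass fibrations is recovered from its Weierstrass data. Concretely, one needs the relative graded algebra $\bigoplus_{n \geq 0} \phi_*\cO_\cX(n\cS)$ to be generated in degrees $\leq 3$ with the unique relation being the Weierstrass equation, so that the natural morphism $\cX \to \underline{\Proj}_\cP(\bigoplus_{n\geq 0} \phi_*\cO_\cX(n\cS))$ is a closed embedding onto exactly the subscheme of $\bP(\cL^{-2}\oplus\cL^{-3}\oplus\cO_\cP)$ produced by the inverse functor. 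This is the relative version of the classical embedding by $\cO_\cX(3\cS)$, and follows fiberwise from classical Weierstrass theory combined with the splittings of \Cref{lemma_miranda}, which identify the subalgebra generated by the local lifts $f \in \phi_*\cO_\cX(2\cS)$ and $g \in \phi_*\cO_\cX(3\cS)$ with $\Sym(\cL^{-2}\oplus\cL^{-3}\oplus\cO_\cP)$ modulo the Weierstrass relation. Morphisms are then handled by observing that both functors are compatible with base change and with the canonical isomorphisms used to identify the data.
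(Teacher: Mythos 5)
Your proposal matches the paper's proof: the forward functor is taken from \Cref{Cor_ma_from_PS_stack_to_W_N}, and the inverse is the same explicit construction of $\cX\subseteq\bP(E)$ with $E$ associated to $\cL^{-2}\oplus\cL^{-3}\oplus\cO_\cP$, cut out by the Weierstrass equation on a trivializing cover and checked to be minimal via \cite{Mir}*{Corollary 2.5}. You are in fact somewhat more explicit than the paper about verifying that the two compositions are $2$-isomorphic to the identity (the paper leaves this implicit), and your identification of the relative $\underline{\Proj}$ recovery via \Cref{lemma_miranda} as the key point there is correct.
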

Combining the result above with \Cref{thm:presentation WN}, we obtain the main result of this Section.
\begin{theorem}\label{thm:presentation WN full}
The following isomorphism of stacks holds over $\spec(\bZ[\frac{1}{6}])$:
\[ \cW^{\min}_N\simeq [(V^N_{4N,6N}\smallsetminus\Delta_N)/(\GL_2/\bmu_N)]. \]
\end{theorem}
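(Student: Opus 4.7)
The plan is simply to assemble the two main results of this section. First, I would invoke \Cref{prop:WN iso FN} to identify $\cW^{\min}_N$ (defined over $\spec(\bZ[\frac{1}{6}])$) with the auxiliary fibered category $\sF^{\min}_N$ parametrizing tuples $(f:\cP\to T,\cL,A,B)$. This translates the moduli-theoretic problem about Weierstrass surfaces into the purely linear-algebraic problem of studying pairs of sections $(A,B)$ of powers of a degree-$N$ line bundle on a family of conics.

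Next, I would apply \Cref{thm:presentation WN}, which presents $\sF^{\min}_N$ as the quotient stack $[(V^N_{4N,6N}\smallsetminus\Delta_N)/G_N]$. That proposition is established over $\spec(\bZ)$; base-changing to $\spec(\bZ[\frac{1}{6}])$ preserves both the representing $G_N$-scheme and the torsor property, so the presentation is valid over the smaller base. Chaining the two isomorphisms gives $\cW^{\min}_N\simeq [(V^N_{4N,6N}\smallsetminus\Delta_N)/G_N]$ over $\spec(\bZ[\frac{1}{6}])$.

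Finally, to bring this into the form stated in the theorem, I would invoke the identification $G_N\simeq \GL_2/\bmu_N$ arising from the surjection $\GL_2\simeq G_1\twoheadrightarrow G_N$, $(\phi_0,\phi_1)\mapsto(\phi_0,\phi_1^{\otimes N})$, whose kernel is the diagonally embedded $\bmu_N\subset\GL_2$, as recalled in \Cref{subsec:basic}. Under this identification the $G_N$-action on $V^N_{4N,6N}$ used in \Cref{thm:presentation WN} is precisely the $\GL_2/\bmu_N$-action implicit in the statement, so substituting yields the claimed formula.

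There is essentially no obstacle: all the substantive work has been carried out in the two cited propositions, and the only care required is to verify that the actions match under $G_N\simeq\GL_2/\bmu_N$, which is tautological from the definitions in \Cref{subsec:basic}. Thus \Cref{thm:presentation WN full} is best viewed as a packaging statement that collects the earlier results into a form suitable for equivariant intersection-theoretic computations in the subsequent sections.
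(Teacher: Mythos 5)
Your proposal is correct and follows exactly the paper's own route: the theorem is obtained by chaining \Cref{prop:WN iso FN} with \Cref{thm:presentation WN} and invoking the identification $G_N\simeq\GL_2/\bmu_N$ from \Cref{subsec:basic}. Your added remarks about base change to $\spec(\bZ[\frac{1}{6}])$ and matching the actions are correct and harmless refinements of what the paper leaves implicit.
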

\begin{Remark}\label{rmk:two presentations}
The presentation above specializes to the two following cases depending on the parity of $N$, that is:
\begin{itemize}
    \item if $N$ is odd, then $\cW^{\min}_N\simeq [(V^N_{4N,6N}\smallsetminus\Delta_N)/\GL_2]$;
    \item if $N$ is even, then $\cW^{\min}_N\simeq [(V^N_{4N,6N}\smallsetminus\Delta_N)/\PGL_2\times \bG_m]$.
\end{itemize}
The actions of these two groups are the ones explained in the Notation section.
\end{Remark}

\subsection{Vector bundles on $\cW^{\min}_N$ when $N$ is odd}
Let us suppose $N$ odd. As oberved in \Cref{rmk:two presentations}, the stack $\cW_N^{\min}$ has a presentation as a quotient by the action of $\GL_2$. In particular, the $\GL_2$-equivariant morphism $V^N_{4N,6N}\smallsetminus\Delta_N\to\spec(\bZ[\frac{1}{6}])$ induces a morphism of quotient stacks $\cW_N^{\min}\to\cB\GL_2$.
This should correspond to a rank two vector bundle on $\cW_N^{\min}$.

\begin{Def}\label{def:vec N odd}
For $N$ odd, we define the rank two vector bundle $\cE_N$ on $\cW^{\min}_N$ as follows:
\[ \cE_N((\cX,\cS)\overset{f}{\to} \cP \overset{p}{\to} T):=p_*((R^1f_*\cO)^{\vee}\otimes\omega_{\cP/T}^{\otimes \frac{N-1}{2}}). \]
\end{Def}
\begin{Prop}\label{prop:class map odd}
The map $\cW_N^{\min}\to \cB\GL_2$ is given by
\[ ((\cX,\cS)\overset{f}{\to}\cP\overset{p}{\to} T)\longmapsto \cE_N((\cX,\cS)\overset{f}{\to}\cP\overset{p}{\to} T). \]
\end{Prop}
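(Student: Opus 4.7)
The plan is to compute $\cE_N$ directly on the universal family using the quotient-stack presentation of \Cref{thm:presentation WN full}, and to recognize the result as the pullback of the tautological bundle from $\cB\GL_2$. Throughout I would set $U:=V^N_{4N,6N}\smallsetminus \Delta_N$ and let $E$ denote the pullback to $\cW^{\min}_N$ of the tautological rank-two bundle on $\cB\GL_2$.

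First, I would observe that the universal $\bP^1$-bundle $p:\cP\to \cW^{\min}_N$ is canonically identified with $\bP(E)\to \cW^{\min}_N$, since both arise from descending the trivial $\bP^1$-bundle on $U$ along the $\GL_2$-action. The next, most delicate, step is to identify the universal line bundle $\cL=(R^1f_*\cO_\cX)^\vee$ as a $\GL_2$-equivariant line bundle on $\bP(E)$. It pulls back to $\cO_{\bP^1}(N)$ on $U$, so on $\bP(E)$ it has the form $\cO_{\bP(E)}(N)\otimes p^*M$ for some line bundle $M$ on $\cW^{\min}_N$; to determine $M$, I would compare pushforwards. On one hand $p_*\cL$ coincides with the bundle associated to the $\GL_2$-representation $V^N_N$, which by \Cref{subsec:basic} is $\det(E)^{(N-1)/2}\otimes \Sym^N(E^\vee)$; on the other hand, the projection formula and the convention $p_*\cO_{\bP(E)}(1)\simeq E^\vee$ give $p_*(\cO_{\bP(E)}(N)\otimes p^*M)\simeq \Sym^N(E^\vee)\otimes M$. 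Matching these forces $\cL \simeq \cO_{\bP(E)}(N)\otimes p^*\det(E)^{(N-1)/2}$.

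Finally, I would use the standard projective-bundle formula $\omega_{\cP/\cW^{\min}_N}\simeq \cO_{\bP(E)}(-2)\otimes p^*\det(E)^{-1}$; the two twists by $\det(E)^{\pm (N-1)/2}$ then cancel, giving $\cL\otimes \omega_{\cP/\cW^{\min}_N}^{\otimes(N-1)/2}\simeq \cO_{\bP(E)}(1)$ and hence $\cE_N \simeq p_*\cO_{\bP(E)}(1)\simeq E^\vee$. This exhibits $\cE_N$ as (the dual of) a rank-two bundle pulled back from $\cB\GL_2$ along the quotient-stack map, as required. The main obstacle is the determination of the twist $\det(E)^{(N-1)/2}$ in $\cL$, which rests on the explicit noncanonical isomorphism $\GL_2 \overset{\sim}{\longrightarrow} G_N = \GL_2/\bmu_N$ from \Cref{subsec:basic} and is the reason this computation is specific to $N$ odd; once this twist is in hand, the remaining cancellation is immediate.
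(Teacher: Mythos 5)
Your overall strategy is sound and genuinely different in presentation from the paper's: the paper works entirely at the level of classifying stacks, first making explicit the isomorphism $\cB(\GL_2/\bmu_N)\simeq\cB\GL_2$ induced by $A\mapsto\det(A)^{\frac{N-1}{2}}A$, checking on objects of the form $(\bP(E^\vee),\cO_{\bP(E^\vee)}(N))$ that this isomorphism is given by $(\cP\to T,\cL)\mapsto p_*(\cL\otimes\omega_{\cP/T}^{\otimes\frac{N-1}{2}})$, and then descending; you instead work directly on the universal family over the quotient presentation and pin down $\cL$ as an equivariant line bundle on the universal conic by comparing pushforwards. The essential content is the same in both arguments: the twist by $\det(E)^{\frac{N-1}{2}}$ coming from the noncanonical splitting $\GL_2\simeq\GL_2/\bmu_N$, and its cancellation against $\omega^{\otimes\frac{N-1}{2}}$. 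Your route has the advantage of making visible exactly where the identification $V^N_N\simeq\det(E)^{\frac{N-1}{2}}\otimes\Sym^N(E^\vee)$ enters. One small point: concluding $M\simeq\det(E)^{\frac{N-1}{2}}$ from $\Sym^N(E^\vee)\otimes M\simeq\Sym^N(E^\vee)\otimes\det(E)^{\frac{N-1}{2}}$ requires a word of justification (two line bundle twists of the same rank-$(N+1)$ bundle need not agree in general); here it follows by taking determinants and using that the relevant Picard group is torsion-free, or more directly by noting that $\cL\otimes\cO_{\bP(E)}(-N)$ is trivial on fibers and hence is the pullback of a line bundle whose linearization is a character of $\GL_2$, necessarily a power of $\det$.

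There is, however, a discrepancy you must reconcile: you conclude $\cE_N\simeq E^\vee$, i.e.\ that the map $\cW_N^{\min}\to\cB\GL_2$ classifies $\cE_N^\vee$ rather than $\cE_N$, whereas the proposition (and the paper's proof) asserts that it classifies $\cE_N$ itself. The source is your identification of the universal conic as $\bP(E)=\Proj(\Sym(\cE^\vee))$, where the paper instead takes the universal pair over $\cB\GL_2$ to be $(\bP(E^\vee),\cO_{\bP(E^\vee)}(N))$; with $p_*\cO_{\bP(E^\vee)}(1)\simeq\cE$ and $\omega_{\bP(E^\vee)/T}\simeq\cO(-2)\otimes p^*\det(\cE)$ one lands on $\cE\otimes\det(\cE)^{\frac{N-1}{2}}$, i.e.\ the classified bundle, not its dual. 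Which of $\bP(E)$ or $\bP(E^\vee)$ is correct is a matter of tracing the convention $[A]\cdot f(x,y)=f(A^{-1}(x,y))$ through the torsor of \Cref{thm:presentation WN}, and you should not wave this away with ``(the dual of)'': as written your argument proves a statement dual to the one claimed. The difference is invisible on $c_2$ but flips the sign of $c_1$, which matters for the explicit relations later in the paper, so either justify the identification $\cP\simeq\bP(E^\vee)$ and redo the last two lines accordingly, or explain why the map in the statement is normalized so that the standard representation pulls back to $\cE_N^\vee$.
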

\begin{proof}
First we claim that the isomorphism $\cB(\GL_2/\bmu_N) \simeq \cB(\GL_2)$ sends a pair $(\cP\overset{p}{\to} T,\cL)$ to $p_*(\cL\otimes \omega_{\cP/T}^{\otimes \frac{N-1}{2}})$. Indeed, consider first the homomorphism $\GL_2\to \GL_2$ that sends $A$ to $\det(A)^{\frac{N-1}{2}}A$; this descends to the isomorphism $\GL_2/\bmu_N \to \GL_2$.

The induced morphism $\cB\GL_2\to\cB\GL_2$ sends a rank two vector bundle $E\to T$ to $\det(E)^{\otimes \frac{N-1}{2}}\otimes E$. On the other hand, the morphism $\cB\GL_2\to \cB(\GL_2/\bmu_N)$ sends $E$ to $(\bP(E^\vee),\cO_{\bP(E^\vee)}(N))$, from which we deduce that $\cB(\GL_2/\bmu_N)\to\cB\GL_2$ sends $(\bP(E^\vee),\cL=\cO_{\bP(E^\vee)}(N))$ to $\det(E)^{\otimes \frac{N-1}{2}}\otimes E$: with a straightforward computation involving the Euler short exact sequence on $\bP(E^\vee)$, we see that the latter vector bundle is isomorphic to $p_*(\cL\otimes\omega_{\bP(E^\vee)/T}^{\otimes \frac{N-1}{2}})$. The claimed description follows then by descent.


By construction, the map $\cW_{\min}^N\to\cB(\GL_2/\bmu_N)$ is as follows:
\[((\cX,\cS)\overset{f}{\to} \cP \overset{p}{\to} T)\longmapsto(\cP\to B,(R^1f_*\cO)^{\vee}).\]
The composition $\cW_{\min}^N \to \cB(\GL_2/\bmu_N) \to \cB\GL_2$ corresponds then to $p_*((R^1f_*\cO)^{\vee}\otimes\omega_{\cP/T}^{\otimes \frac{N-1}{2}})$.
\end{proof}
\subsection{Vector bundles on $\cW^{\min}_N$ when $N$ is even}
In this case, the presentation of the stack $\cW_N^{\min}$ given in \Cref{thm:presentation WN full} can be recasted in terms of the group $\PGL_2\times \bG_m$. In particular, this shows that there is a map $\cW_N^{\min}\to \cB\PGL_2\times\cB\bG_m$, which then must be induced by a Severi-Brauer stack on $\cW_N^{\min}$ together with a line bundle.

\begin{Def}\label{def:vec N even}
For $N$ even, we define the rank three vector bundle $\cE_N$ and the line bundle $\cL_N$ on $\cW^{\min}_N$ as follows:
\begin{align*}
    &\cE_N((\cX,\cS)\overset{f}{\to}\cP\overset{p}{\to} T):=p_*(\omega_{\cP/T}^{\vee}),\\
    &\cL_N((\cX,\cS)\overset{f}{\to}\cP\overset{p}{\to} T):=p_*((R^1f_*\cO_{\cX})^{\vee}\otimes \omega_{\cP/T}^{\otimes \frac{N}{2}}).
\end{align*}
\end{Def}

The vector bundle $\cE_N$ actually plays no role here, but it will be relevant later on.

\begin{Prop}\label{prop:class map even}
The map $\cW_N^{\min}\to \cB\PGL_2\times\cB\bG_m$ is given by
\[ ((\cX,\cS)\overset{f}{\to}\cP\overset{p}{\to} T)\longmapsto  (\cP\overset{p}{\to}T,\cL_N((\cX,\cS)\overset{f}{\to}\cP\overset{p}{\to} T))\]
\end{Prop}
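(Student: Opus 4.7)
The plan is to mirror the proof of \Cref{prop:class map odd}. The crucial step is to identify the isomorphism $\cB(\GL_2/\bmu_N) \simeq \cB\PGL_2 \times \cB\bG_m$ at the level of $T$-points as the functor
\[ (\cP \xrightarrow{p} T,\, \cL) \longmapsto \bigl(\cP \xrightarrow{p} T,\; p_*(\cL \otimes \omega_{\cP/T}^{\otimes N/2})\bigr). \]
Granting this, the proposition follows immediately upon precomposing with the functor $\cW_N^{\min} \to \cB(\GL_2/\bmu_N)$ sending a family $((\cX,\cS) \overset{f}{\to} \cP \overset{p}{\to} T)$ to $(\cP \to T, (R^1 f_* \cO_\cX)^\vee)$, as established in the proof of \Cref{prop:WN iso FN}.

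To prove the key identification, I would unwind the isomorphism $\bG_m \times \PGL_2 \overset{\sim}{\to} \GL_2/\bmu_N$ from Section~\ref{subsec:basic}. A direct computation with its inverse shows that the composition
\[ \GL_2 \longrightarrow \GL_2/\bmu_N \overset{\sim}{\longrightarrow} \bG_m \times \PGL_2 \]
is the group homomorphism $A \mapsto ((\det A)^{N/2},[A])$. The first factor is well-defined on $\GL_2/\bmu_N$ precisely because $N$ is even: $N/2$ is then an integer and $\det(\xi A)^{N/2} = \xi^N (\det A)^{N/2} = (\det A)^{N/2}$ for $\xi \in \bmu_N$. Hence the induced functor on classifying stacks sends a rank two vector bundle $E$ to $(\bP(E^\vee), \det(\cE)^{\otimes N/2})$.

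On the other hand, the composition $\cB\GL_2 \to \cB(\GL_2/\bmu_N) \to \cB\PGL_2 \times \cB\bG_m$ through the proposed functor sends $E$ first to $(\bP(E^\vee), \cO_{\bP(E^\vee)}(N))$ and then to $(\bP(E^\vee),\, p_*(\cO(N) \otimes \omega_{\bP(E^\vee)/T}^{\otimes N/2}))$. Using $\omega_{\bP(E^\vee)/T} \cong \pi^* \det(\cE) \otimes \cO(-2)$ (from the Euler sequence for $\bP(E^\vee) = \underline{\Proj}(\underline{\Sym}\,\cE)$), the short calculation
\[ \cO_{\bP(E^\vee)}(N) \otimes \omega_{\bP(E^\vee)/T}^{\otimes N/2} \cong \pi^* \det(\cE)^{\otimes N/2}, \]
combined with the projection formula, yields $p_*(\cO(N) \otimes \omega_{\bP(E^\vee)/T}^{\otimes N/2}) \cong \det(\cE)^{\otimes N/2}$, matching the group-theoretic output. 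Since every $(\GL_2/\bmu_N)$-torsor lifts étale-locally to a $\GL_2$-torsor, this agreement over $\cB\GL_2$ upgrades to an isomorphism of the two functors on all of $\cB(\GL_2/\bmu_N)$.

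The main subtlety is the bookkeeping surrounding the isomorphism $\bG_m \times \PGL_2 \simeq \GL_2/\bmu_N$: it is defined via fractional powers of the determinant, and one must verify carefully that the resulting composition $\GL_2 \to \bG_m \times \PGL_2$ really is a group homomorphism and descends to $\GL_2/\bmu_N$ (which is where the parity hypothesis on $N$ enters). The Euler-sequence and descent computations are otherwise routine.
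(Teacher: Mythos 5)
Your proposal is correct and follows essentially the same route as the paper: identify the isomorphism $\cB(\GL_2/\bmu_N)\simeq\cB(\PGL_2\times\bG_m)$ on objects by computing the composite homomorphism $\GL_2\to\PGL_2\times\bG_m$ (which is $A\mapsto([A],\det(A)^{\otimes N/2})$, well defined on $\GL_2/\bmu_N$ precisely because $N$ is even), match $\det(E)^{\otimes N/2}$ with $p_*(\cO_{\bP(E^\vee)}(N)\otimes\omega_{\bP(E^\vee)/T}^{\otimes N/2})$ via the Euler sequence, extend by descent, and precompose with $\cW_N^{\min}\to\cB(\GL_2/\bmu_N)$. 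No changes needed.
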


\begin{proof}
The stack $\cB(\GL_2/\bmu_N)$ classifies pairs $(\cP\overset{p}{\to} T,\cL)$ where $\cP\to T$ is a Severi-Brauer variety and $\cL$ is a line bundle on $\cP$ whose restriction to the geometric fibers of $\cP\to T$ has degree $N$.
We claim that the isomorphism $\cB (\GL_2/\bmu_N)\simeq \cB(\PGL_2\times\bG_m)$ sends a pair $(\cP\overset{p}{\to} T,\cL)$ to the pairs $(\cP\overset{p}{\to} T,p_*(\cL\otimes\omega_{\cP/T}^{\otimes \frac{N}{2}}))$. 

Indeed, consider the homomorphism $\GL_2\to\PGL_2\times\bG_m$ which sends $A\mapsto ([A],\det(A)^{\frac{N}{2}})$: this homomorphism descends to the isomorphism $\GL_2/\bmu_N \overset{\simeq}{\to} \PGL_2\times\bG_m$. The induced morphism $\cB\GL_2\to\cB(\PGL_2\times\bG_m)$ then works as follows: a rank two vector bundle $E\to T$ with associated cocycles $\{A_{ij}\}$ is sent to the torsor whose associated cocycles are $\{[A_{ij}],\det(A_{ij})^{\frac{N}{2}}\}$, i.e. the object $(\bP(E^\vee)\to T,\det(E)^{\otimes \frac{N}{2}})$.

Observe now that the morphism $\cB\GL_2 \to \cB(\GL_2/\bmu_N)$ sends a rank two vector bundle $E\to T$ to $(\bP(E^\vee)\to T, \cL=\cO_{\bP(E^\vee)}(N))$; we deduce that $\cB(\GL_2/\bmu_N)\to\cB(\PGL_2\times\bG_m)$ must send $(\bP(E^\vee)\overset{p}{\to} T, \cL=\cO_{\bP(E^\vee)}(N))$ to $(\bP(E^\vee)\to T,\det(E)^{\otimes \frac{N}{2}})$. A straightforward computation with the Euler sequence of $\bP(E^\vee)\overset{p}{\to} T$ shows that $\det(E)^{\otimes \frac{N}{2}} \simeq p_*(\cL\otimes\omega_{\bP(E^\vee)/T}^{\otimes \frac{N}{2}})$.

As every object $(\cP\to T,\cL)$ is \'{e}tale locally isomorphic to an object of the form $(\bP(E^\vee)\to T, \cO_{\bP(E^\vee)}(N))$, we obtain by descent the claimed description of the isomorphism $\cB (\GL_2/\bmu_N)\simeq \cB(\PGL_2\times\bG_m)$.

The map $\cW_N^{\min}\to \cB\PGL_2\times\cB\bG_m$ can be factored as
\[ \cW_N^{\min} \longrightarrow \sF^{\min}_N \longrightarrow \cB(\GL_2/\bmu_N) \longrightarrow \cB(\PGL_2\times\bG_m),\]
where the composition of the first two maps sends an object $((\cX,\cS)\overset{f}{\to}\cP\overset{p}{\to} T)$ to the pair $(\cP\overset{p}{\to}T,(R^1f_*\cO_{\cX})^{\vee})$, from which we deduce that the composition sends a family of minimal Weierstrass fibrations over $T$ to the pair \[(\cP\overset{p}{\to}T,p_*((R^1f_*\cO_{\cX})^{\vee}\otimes \omega_{\cP/T}^{\otimes \frac{N}{2}})).\]
\end{proof}
\section{Equivariant intersection theory on projective spaces}\label{equivint}
in this section, we work over a ground field of any characteristic.
Set $V_k:=\oH^0(\bP^1,\cO(k))$. The projective space $\bP V_k$ can be naturally identified with the Hilbert scheme of $k$ points on $\bP^1$. In this section we consider two actions on $\bP V_k$, namely:
\begin{itemize}
    \item the $\PGL_2$-action inherited from the natural action of $\PGL_2$ on $\bP^1$, that is $[A]\cdot [f(x,y)]:=[f(A^{-1}(x,y))]$;
    \item the $\GL_2$-action induced by the $\PGL_2$-action above via the homomorphism $\GL_2\to\PGL_2$.    
\end{itemize}

The aim of this Section is to collect some basic facts on the integral Chow ring of $[\bP V_k/G]$, where $G$ is either $\GL_2$ or $\PGL_2$. We will divide our analysis in two parts, depending on whether $V_k$ is a $G$-representation or not. The reason for this is that, given a projective linear action of a group $G$ over a projective space $\bP(V)$, the resulting quotient stack $[\bP(V)/G]$ is a projective bundle over $\cB G$ if and only if the action of $G$ lifts to a linear action on $V$, i.e. if $V$ is a $G$-representation.

\subsection{First case}
As $V_k$ is a $\GL_2$-representation, the stack $[\bP V_k/\GL_2]$ is a projective bundle over $\cB\GL_2$. Similarly, for $k$ even, the vector space $V_k$ is a $\PGL_2$-representation, where the action is defined as
\[ A\cdot f(x,y) := \det(A)^{\frac{k}{2}}f(A^{-1}(x,y)). \]
Equivalently, the representation above is obtained by taking the $\GL_2/\bmu_k$-representation $V_k^k$ of Section \Cref{subsec:basic} and endowing it with a $\PGL_2$ action via the homomorphism $\PGL_2 \to \GL_2/\bmu_k$ defined as $[A]\mapsto [\det(A)^{-\frac{1}{2}} A]$.
Therefore, for $G=\GL_2$ or $G=\PGL_2$ and $k$ even, we have that $\pi:[\bP V_k/G]\to\cB G$ is a projective bundle.

Let $h$ be the hyperplane class. From an equivariant point of view, we can regard $h$ as the class of the $G$-equivariant line bundle $\cO_{\bP V_k}(1)$. The following Proposition is just the usual projective bundle formula.
\begin{Prop}\label{prop:chow hilb even}
Assume that either $G=\GL_2$ or $G=\PGL_2$ and $k$ is even. Then:
\begin{enumerate}
    \item The integral Chow ring of $[\bP V_k/G]$ is generated as $\ch^*(\cB G)$-module by $h^m$ for $m\leq k$.
    \item We have $\pi_*(h^m)=s_{m-k}^{G}(V_k)$, where the latter denotes the $G$-equivariant Segre class of degree $m-k$ of $V_k$.
\end{enumerate}
\end{Prop}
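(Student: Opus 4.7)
The plan is to reduce both statements to the equivariant projective bundle formula. Since $V_k$ is a $G$-representation (for $G=\GL_2$ by assumption, and for $G=\PGL_2$ when $k$ is even via the formula $A\cdot f := \det(A)^{k/2} f(A^{-1}(-))$), the representation $V_k$ descends to a rank $k+1$ vector bundle $\cV_k$ on $\cB G$. Under this identification, the map $\pi: [\bP V_k/G] \to \cB G$ is precisely the projectivization $\bP(\cV_k) \to \cB G$, and the $G$-equivariant line bundle $\cO_{\bP V_k}(1)$ descends to the relative $\cO(1)$ on $\bP(\cV_k)$, so $h = c_1(\cO_{\bP(\cV_k)}(1))$.

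For part (1), I would apply the equivariant projective bundle formula of Edidin--Graham: since $\cV_k$ has rank $k+1$, the Chow ring $\ch^*(\bP(\cV_k))$ is a free module over $\ch^*(\cB G)$ with basis $1, h, h^2, \ldots, h^k$. In particular, the classes $\{h^m\}_{0 \leq m \leq k}$ generate $\ch^*([\bP V_k/G])$ as a $\ch^*(\cB G)$-module, which is exactly the claim.

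For part (2), I would invoke the defining property of Segre classes: for any rank $r$ vector bundle $E$ on a base $X$ with projection $\pi: \bP(E) \to X$, one has $\pi_*(h^{r-1+i}) = s_i(E)$, with the convention that $s_i(E) = 0$ for $i < 0$ and $s_0(E) = 1$. This formula holds equivariantly, by functoriality of Edidin--Graham's construction under the approximation by finite-dimensional projective bundles. Applied to $E = \cV_k$ of rank $r = k+1$, this gives
\[ \pi_*(h^m) = \pi_*\bigl(h^{(k+1)-1 + (m-k)}\bigr) = s_{m-k}(\cV_k) = s_{m-k}^{G}(V_k), \]
which is the desired formula; in particular $\pi_*(h^m) = 0$ for $m < k$ and $\pi_*(h^k) = 1$, consistent with part (1).

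There is no real obstacle here: the statement is a direct translation of the projective bundle formula to the equivariant setting, and the only subtlety is verifying that the presentation of $[\bP V_k/G]$ matches the projectivization of the associated bundle on $\cB G$, which follows from the fact that $G$ acts linearly on $V_k$. The more delicate analysis arises in the second case (not covered by this proposition), where the $\PGL_2$-action on $\bP V_k$ for $k$ odd does not lift to a linear action on $V_k$, so $[\bP V_k/\PGL_2]$ is only a Brauer--Severi stack rather than a projective bundle over $\cB\PGL_2$.
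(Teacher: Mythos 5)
Your proof is correct and takes the same route as the paper, which simply observes that the statement \emph{is} the usual (equivariant) projective bundle formula once one notes that $V_k$ is a $G$-representation in both cases, so that $[\bP V_k/G]\to\cB G$ is a projective bundle. Your index bookkeeping for the Segre classes ($\pi_*(h^{k+i})=s_i$, hence $\pi_*(h^m)=s_{m-k}^G(V_k)$) is also consistent with the convention used throughout the paper.
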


\subsection{Second case}\label{second case}
For $k$ odd, the vector space $V_k$ is not a $\PGL_2$-representation: indeed, any lift of the $\PGL_2$-action on $\bP(V_k)$ to $V_k$ should be of the form $[A]\cdot f(x,y)=\det(A)^d f(A^{-1}(x,y))$, and there is no choice of $d$ which makes the formula above well defined; picking a different representative $\lambda A$ for $[A]$ makes the right hand side equal to $\lambda^{2d-k}\det(A)^d f(A^{-1}(x,y))$. This implies that the quotient stack $[\bP V_k/\PGL_2]$ is not a projective bundle over $\cB\PGL_2$. We have to treat this second case differently.

Let $\Sigma_k\subset \bP V_k\times\bP^1$ be the $\PGL_2$-invariant subscheme defined as
\[ \Sigma_k=\{ (f,x) \text{ such that } f(x)=0 \}. \]
The line bundle $\cO(\Sigma_k)$ is isomorphic to $\pr_1^*\cO_{\bP V_k}(1)\otimes\pr_2^*\cO_{\bP^1}(k)$, hence the latter admits a $\PGL_2$-linearization. The canonical line bundle $\omega_{\bP^1}\simeq\cO_{\bP^1}(-2)$ admits a $\PGL_2$-linearization as well. Then the isomorphism
\begin{equation*}
    \cO_{\bP V_k}(1)\otimes V_1\simeq \pr_{1*}(\pr_1^*\cO_{\bP V_k}(1)\otimes\pr_2^*\cO_{\bP^1}(k)\otimes\pr_2^*\omega_{\bP^1}^{\frac{k-1}{2}}),
\end{equation*}
gives a $\PGL_2$-linearization to the rank two vector bundle $\cO_{\bP V_k}(1)\otimes V_1$.

Before going on, recall (\cite{Phan}) that $\ch^*(\cB\PGL_2)\simeq\bZ[c_2,c_3]/(2c_3)$, where $c_i$ is the $i^{\rm th}$ Chern class of the vector bundle $[V_2/\PGL_2]\to\cB\PGL_2$. In what follows we will use the following standard convention for Chern classes: as the projection morphism $\pi:[\bP V_k/\PGL_2]\to\cB\PGL_2$ gives to $\ch^*([\bP V_k/\PGL_2])$ the structure of a $\ch^*(\cB\PGL_2)$-module via the pullback homomorphism, we will use the notation $c_i$ for the classes $\pi^*c_i$. With this convention, the projection formula $\pi_*(\pi^*c_i^d \cdot \eta)=c_i^d\cdot \pi_*\eta$ simply reads as $\pi_*(c_i^d \cdot \eta) = c_i^d \cdot \pi_*\eta$. 
\begin{Prop}\label{prop:chow hilb odd}
    For $k\geq 0$ odd, let $\gamma_1$, $\gamma_2$ be the $\PGL_2$-equivariant Chern classes of $\cO_{\bP V_k}(1)\otimes V_1$. Then we have $$\ch^*([\bP V_k/\PGL_2])\simeq \bZ[\gamma_1,\gamma_2]/(\prod_{i=0}^{\frac{k-1}{2}} \left(\gamma_2+\frac{(k-2i)^2-1}{4}c_2\right)),$$
    where $c_2=-\gamma_1^2+4\gamma_2$, and the generators of this ring as a module over $\ch^*(\cB\PGL_2)$ are $\gamma_1^i\gamma_2^j$, where $i=0,1$ and $j=0,1,\dots,\frac{k-1}{2}$.
\end{Prop}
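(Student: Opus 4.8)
The plan is to produce the single relation explicitly as the Euler class of a natural rank-$(k+1)$ bundle, and then to prove completeness and the module structure by restricting to a maximal torus of $\PGL_2$.

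First I would exhibit the relation. The tautological inclusion $\cO_{\bP V_k}(-1)\hookrightarrow V_k\otimes\cO_{\bP V_k}$, twisted by $\cO_{\bP V_k}(1)$, is a nowhere-vanishing $\PGL_2$-equivariant section of $\mathcal F:=V_k\otimes\cO_{\bP V_k}(1)$; since $\operatorname{rk}\mathcal F=k+1=\dim\bP V_k+1$, its top Chern (Euler) class vanishes, so $c_{k+1}(\mathcal F)=0$ in $\ch^*([\bP V_k/\PGL_2])$. The subtlety is that, although $\cO_{\bP V_k}(1)$ and $V_k$ are not separately $\PGL_2$-linearized, their product is: using $\Sym^k W\cong\cO_{\bP V_k}(k)\otimes V_k$ and $\det W\cong\cO_{\bP V_k}(2)$ (both as $\PGL_2$-linearized bundles, with $W=\cO_{\bP V_k}(1)\otimes V_1$) one obtains a $\PGL_2$-equivariant identification $\mathcal F\cong\Sym^k W\otimes(\det W)^{\otimes\frac{1-k}{2}}$.

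Then I would compute $c_{k+1}(\mathcal F)$ by the splitting principle. Writing $r_1,r_2$ for the Chern roots of $W$, so $\gamma_1=r_1+r_2$ and $\gamma_2=r_1r_2$, and setting $\delta:=r_1-r_2$ with $\delta^2=\gamma_1^2-4\gamma_2=-c_2$, the Chern roots of $\mathcal F$ are $\tfrac12(\gamma_1+m\delta)$ as $m$ runs over the odd integers with $|m|\le k$. Pairing $m$ with $-m$ and using the identity $4\bigl(\gamma_2+\tfrac{m^2-1}{4}c_2\bigr)=\gamma_1^2-m^2\delta^2$, the powers of $2$ cancel and one gets exactly
\[ c_{k+1}(\mathcal F)=\prod_{i=0}^{\frac{k-1}{2}}\Bigl(\gamma_2+\tfrac{(k-2i)^2-1}{4}c_2\Bigr). \]
This identifies the displayed product $P$ as the relation and, once generation is known, produces a surjection $\bZ[\gamma_1,\gamma_2]/(P)\twoheadrightarrow\ch^*([\bP V_k/\PGL_2])$. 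For completeness I would pass to a maximal torus $T\cong\bG_m\subset\PGL_2$: over $T$ the obstruction disappears, both $\cO_{\bP V_k}(1)$ and $V_k$ admit $T$-linearizations, so $[\bP V_k/T]\to\cB T$ is an honest projective bundle and $\ch^*_T(\bP V_k)=\bZ[\gamma_1][\xi]/\bigl(\prod_{i=0}^k(\xi-w_i)\bigr)$ for the explicit $T$-weights $w_i$ (integer multiples of $\gamma_1$) of the $T$-fixed monomials. I would then show that $\ch^*([\bP V_k/\PGL_2])\to\ch^*_T(\bP V_k)$ identifies the source with the Weyl invariants (the Weyl group acting by $\gamma_1\mapsto-\gamma_1$), matching $P$ with the invariant form of the projective-bundle relation. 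In particular the target is torsion-free, which forces $\pi^*c_3=0$; the module structure over $\ch^*(\cB\PGL_2)=\bZ[c_2,c_3]/(2c_3)$ then follows formally, since $\gamma_1^2=4\gamma_2-c_2$ reduces every monomial to $\gamma_1$-degree $\le1$, and $P$, monic of degree $\tfrac{k+1}{2}$ in $\gamma_2$ over $\bZ[c_2]$, reduces the $\gamma_2$-degree to $\le\tfrac{k-1}{2}$.

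The main obstacle is precisely this last comparison. Unlike the situation of \Cref{prop:chow hilb even}, the morphism $[\bP V_k/\PGL_2]\to\cB\PGL_2$ is \emph{not} a projective bundle, and the restriction to a fiber $\bP^k$ need not be surjective (already for $k=1$ the stack is $\cB B$ for a Borel $B$, and $\gamma_1$ restricts to $2h$), so no Leray--Hirsch argument is available. The delicate points are therefore to prove that $\gamma_1,\gamma_2$ generate over $\ch^*(\cB\PGL_2)$ and to prove torsion-freeness (equivalently $\pi^*c_3=0$), so that the integral restriction to $\ch^*_T(\bP V_k)$ is injective with image the Weyl invariants. I would establish these through the torus comparison above, combined with a rational check (where localization to $T$ is unconditionally valid) and a matching of Poincaré series against the free $\bZ$-module $\bZ[\gamma_1,\gamma_2]/(P)$.
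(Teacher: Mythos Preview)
Your derivation of the single relation is correct and matches the paper's computation: both identify the relation with the top Chern class of a rank-$(k{+}1)$ bundle and reduce it to the displayed product by pairing the Chern roots $r_1,r_2$ of $W$. The paper starts instead from the presentation
\[
\bZ[u,v]^{S_2}\Big/\Big(\prod_{i=0}^{k}\bigl((\tfrac{k+1}{2}-i)u+(\tfrac{-k+1}{2}+i)v\bigr)\Big),
\]
with $u,v$ the Chern roots of $W$, quoted from \cite[Proposition 3.7]{ST21}, and then performs exactly the same pairing manipulation you do.

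The substantive content of the proposition, however, is precisely the step you flag as the obstacle: that $\gamma_1,\gamma_2$ generate and that $\pi^*c_3=0$ (equivalently, torsion-freeness). The paper does not prove this either; it is imported wholesale from \cite{ST21}. Your proposed argument for it has a genuine gap. The claim that restriction to a maximal torus $T\subset\PGL_2$ identifies $\ch^*_{\PGL_2}(\bP V_k)$ with the Weyl invariants $\ch^*_T(\bP V_k)^W$ fails for $\PGL_2$ in general: already for $X=\spec(k)$ the map $\ch^*(\cB\PGL_2)\to\ch^*(\cB T)^W$ is neither injective ($c_3$ lies in the kernel) nor surjective onto $\bZ[t^2]$ (since $c_2\mapsto -4t^2$). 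So torsion-freeness of the target does not by itself force $\pi^*c_3=0$. Your suggested workaround---a rational check together with a matching of Poincar\'e series---is circular, since it presupposes knowledge of the ranks of $\ch^i_{\PGL_2}(\bP V_k)$; and in any case a rational isomorphism between free $\bZ$-modules of equal rank need not be an integral isomorphism (think of multiplication by $2$ on $\bZ$).

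In short: the paper's proof is essentially a citation of \cite{ST21} followed by the algebraic rewriting you carry out correctly; your proposal reproduces the rewriting but does not supply an independent proof of the cited input, and the torus-restriction strategy as stated cannot close that gap.
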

\begin{proof}
For $k$ odd, in \cite[Proposition 3.7]{ST21} it is proved that the Chow ring of $\ch^*([\bP V_k/\PGL_2])$ is isomorphic to 
\[\bZ[u,v]^{S_2}/(\prod_{i=0}^{k} ((\frac{k+1}{2}-i)u + (\frac{-k+1}{2}+i)v)),\]
where $u,v$ are the Chern roots of $\cO_{\bP V_k}(1)\otimes V_1$, so that $u+v=\gamma_1$ and $uv=\gamma_2$. 

Moreover, we know from \cite{ST21} that the pullback homomorphism $\ch^*(\cB \PGL_2)\to \ch^*([\bP V_k/\PGL_2])$ sends $c_2\mapsto -(u-v)^2$ and $c_3\mapsto 0$. This implies that
\[ \gamma_1^2=(u+v)^2 = (u-v)^2+4uv = 4uv-c_2=4\gamma_2-c_2. \]
In particular $\ch^*([\bP V_k/\PGL_2])$ is generated as a module by monomials of the form $(u+v)^i(uv)^j$, where $i$ is either $0$ or $1$.

We can then rewrite the relation as follows:
\begin{align*}
    & \prod_{i=0}^{\frac{k-1}{2}} \left(\left(\frac{k+1}{2}-i\right)u + \left(\frac{-k+1}{2}+i\right)v\right)\left(\left(\frac{-k+1}{2}+i\right)u + \left(\frac{k+1}{2}-i)v\right) \right)\\
    =& \prod_{i=0}^{\frac{k-1}{2}} \left(uv+\left(\frac{k+1}{2}-i\right)\left(\frac{-k+1}{2}+i\right)(u-v)^2\right) \\
    =&\prod_{i=0}^{\frac{k-1}{2}} \left(uv+\frac{(k-2i)^2-1}{4}c_2\right).
\end{align*}
This shows that the monomials $(u+v)^i(uv)^j$ for $i\leq 1$ and $j\leq \frac{k-1}{2}$ actually generate $\ch^*([\bP V_k/\PGL_2])$ as a module over $\ch^*(\cB\PGL_2)$, as claimed.
\end{proof}
\begin{Remark}\label{oss:P1}
For $k=1$, we can see that the Chow ring of $[\bP V_1/\PGL_2]$ is generated by the first Chern class of the normal bundle of the universal section which, coherently with the usual definition of psi classes, we denote $\psi_1$. In fact, on the universal conic $p:\cP\to [\bP V_1/\PGL_2]$ we have a short exact sequence
\begin{equation} \label{eq:sequence}
    0\longrightarrow \cO_{\cP} \longrightarrow \cO(\sigma) \longrightarrow \sigma_*\sigma^*\cO(\sigma) \longrightarrow 0
\end{equation}
where $\sigma$ is the universal section.
By pushing forward along $p$, we get an exact sequence of locally free sheaves
\[0\longrightarrow \cO \longrightarrow p_*\cO(\sigma) \longrightarrow \sigma^*\cO(\sigma) \longrightarrow 0 \]
which shows that the rank two bundle in the middle is an extension of the normal bundle of the universal section by the trivial line bundle. This implies that $\gamma_1=\psi_1$ and $\gamma_2=0$.
\end{Remark}
Next we give an explicit description of the pushforward morphism along $\pi:[\bP V_k/\PGL_2]\to \cB\PGL_2$. For this, set
$$E_{n,m}(q):=(-1)^q\underset{a+b=2q+1}{\sum_{a=0}^{m}\sum_{b=0}^{n}}2^{m-a}\binom{m}{a}\binom{n}{b}.$$
\begin{Lemma}\label{lm:push odd}
    We have $$\pi_*(\gamma_1^m\gamma_2^n)=k^{-1}\sum_{0\leq q\leq n+\frac{m-k}{2}} E_{n,m}(q)\cdot s^{\PGL_2}_{2(n-q)+m-k}(V_{k-1})\cdot 2c_2^q.$$
\end{Lemma}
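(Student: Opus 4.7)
The plan is to reduce the pushforward on $[\bP V_k/\PGL_2]$ to one over the fibered product $[\bP V_{k-1}/\PGL_2]\times_{\cB\PGL_2}[\bP^1/\PGL_2]$, where the first factor satisfies the projective bundle formula of \Cref{prop:chow hilb even} (since $k-1$ is even) and the second is the $k=1$ case of \Cref{prop:chow hilb odd}.

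Concretely, consider the incidence divisor $\Sigma_k\subset\bP V_k\times\bP^1$ from \Cref{second case}. The multiplication map $(g,p)\mapsto (g\cdot\ell_p,p)$, where $\ell_p\in V_1$ is a linear form vanishing at $p$, gives a $\PGL_2$-equivariant isomorphism $\Sigma_k\simeq\bP V_{k-1}\times\bP^1$, and the first projection $q\colon\Sigma_k\to\bP V_k$ is finite flat of degree $k$. Hence $q_*q^*=k\cdot\mathrm{id}$ and $k\pi_*=(\pi\circ q)_*\circ q^*$. Tracing through the multiplication map shows $q^*\cO_{\bP V_k}(1)\simeq\cO_{\bP V_{k-1}}(1)\boxtimes\cO_{\bP^1}(1)$. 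Setting $h=c_1(\cO_{\bP V_{k-1}}(1))$ and recalling from \Cref{oss:P1} that $\cO_{\bP^1}(1)\otimes V_1$ has Chern classes $\psi$ and $0$ on $[\bP^1/\PGL_2]$, the pulled-back bundle $q^*(\cO_{\bP V_k}(1)\otimes V_1)=\cO_{\bP V_{k-1}}(1)\otimes(\cO_{\bP^1}(1)\otimes V_1)$ has Chern roots summing to $2h+\psi$ and multiplying to $h^2+h\psi$. Thus $q^*\gamma_1=2h+\psi$ and $q^*\gamma_2=h^2+h\psi$, and the binomial theorem gives
\[q^*(\gamma_1^m\gamma_2^n)=h^n(2h+\psi)^m(h+\psi)^n=\sum_{a=0}^{m}\sum_{b=0}^{n}\binom{m}{a}\binom{n}{b}\,2^{m-a}\,h^{m+2n-a-b}\,\psi^{a+b}.\]

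To push forward I factor $(\pi\circ q)_*=\pi'_*\circ(p_1)_*$, where $p_1$ is the first projection of the fibered product. By base change and the projection formula, $(\pi\circ q)_*(h^i\psi^j)=\pi'_*(h^i)\cdot\pi''_*(\psi^j)$, with $\pi'\colon[\bP V_{k-1}/\PGL_2]\to\cB\PGL_2$ and $\pi''\colon[\bP^1/\PGL_2]\to\cB\PGL_2$. Since $k-1$ is even, \Cref{prop:chow hilb even} yields $\pi'_*(h^i)=s^{\PGL_2}_{i-(k-1)}(V_{k-1})$. For $\pi''$, the case $k=1$ of \Cref{prop:chow hilb odd} gives $\psi^2=-c_2$, and $\pi''_*(\psi)=2$: this is seen by pulling back along the $\bG_m$-gerbe $\cB\GL_2\to\cB\PGL_2$, where $\cO(1)$ on $\bP V_1$ becomes $\GL_2$-equivariant with class $\zeta$ and $\psi=2\zeta+c_1(V_1)$, so by the projective bundle formula for $[\bP V_1/\GL_2]\to\cB\GL_2$ the pushforward equals $2\pi_*(\zeta)+c_1(V_1)\pi_*(1)=2$. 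Iterating $\psi^2=-c_2$ then shows $\pi''_*(\psi^j)=0$ for $j$ even and $\pi''_*(\psi^{2q+1})=2(-1)^q c_2^q$. Only odd $a+b$ contribute; setting $a+b=2q+1$ and collecting coefficients yields the factor $E_{n,m}(q)\cdot 2c_2^q\cdot s^{\PGL_2}_{2(n-q)+m-k}(V_{k-1})$. Dividing by $k$ produces the claimed formula, with the truncation $q\leq n+(m-k)/2$ coming from vanishing of equivariant Segre classes in negative degree.

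The main subtle step is the identity $\pi''_*(\psi)=2$, which is responsible for the factor of $2$ preceding $c_2^q$; the rest is bookkeeping with the binomial expansion and the Segre-class pushforward for the projective bundle $[\bP V_{k-1}/\PGL_2]\to\cB\PGL_2$.
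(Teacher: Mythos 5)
Your proof is correct and follows essentially the same route as the paper: your degree-$k$ cover $\Sigma_k\simeq\bP V_{k-1}\times\bP^1\to\bP V_k$ is exactly the multiplication map $[\bP V_1\times\bP V_{k-1}/\PGL_2]\to[\bP V_k/\PGL_2]$ used there, and the pullbacks $q^*\gamma_1=2h+\psi$, $q^*\gamma_2=h(h+\psi)$, the projective bundle pushforward for $k-1$ even, and the identity $\pi''_*(\psi)=2$ all coincide with the paper's computation. The only point worth adding is the justification for the final division by $k$: one recovers $\pi_*(\gamma_1^m\gamma_2^n)$ from $k\cdot\pi_*(\gamma_1^m\gamma_2^n)$ because multiplication by the odd integer $k$ is injective on $\ch^*(\cB\PGL_2)\simeq\bZ[c_2,c_3]/(2c_3)$.
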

Observe that the sum above is actually a scalar multiple of $c_2$: this is because every monomial containing $c_3$ that appears in a Segre class is killed by the multiplication by $2$, hence the polynomial above lives in the ring $\bZ[c_2]$. In this way the multiplication by the inverse of $k$ can be understood literally, i.e. as the division of the scalar coefficient by $k$.

In particular, we are implying that such scalar coefficient is a multiple of $k$, because the whole expression belongs to the integral Chow ring.
\begin{proof}
Consider the commutative diagram of quotient stacks
\begin{equation}\label{eq:diag}
    \begin{tikzcd}
    \left[\bP V_1 \times \bP V_{k-1}/\PGL_2\right] \arrow[d, "\pr_1"] \arrow[r, "\rho"] & \left[\bP V_k/\PGL_2\right] \arrow[d, "\pi"] \\
    \left[\bP V_1/\PGL_2\right] \arrow[r, "\pi'"] & \cB \PGL_2
\end{tikzcd}
\end{equation}
where the top horizontal arrow is induced by the multiplication map. This map is finite of degree $k$, hence $\rho_*\rho^*\xi=k\xi$ and
\begin{equation*}
    k\cdot \pi_*\xi = \pi_*\rho_*\rho^*\xi = \pi'_*\pr_{1*}(\rho^*\xi).
\end{equation*}
As $k$ is odd, multiplication by $k$ is an injective group endomorphism of $\ch^*(\cB \PGL_2)$. This argument shows that once we understand how the pullback homomorphism $\rho^*$ and the composition $\pi'_*\pr_{1*}$ works, we also have an explicit formula for $\pi_*$.

We first have to compute the pullback of $\gamma_1$ and $\gamma_2$ to the Chow ring of $\left[\bP V_1 \times \bP V_{k-1}/\PGL_2\right]$.  For this, observe that $\rho^*(V_1\otimes\cO_{\bP V_k}(1))=V_1\otimes\pr_1^*\cO_{\bP V_1}(1) \otimes\pr_2^*\cO_{\bP V_{k-1}}(1)$.
Recall from \Cref{prop:chow hilb even} and \Cref{oss:P1} that $h=c_1^{\PGL_2}(\cO_{\bP V_{k-1}}(1))$ and $\psi_1=c_1^{\PGL_2}(V_1\otimes\cO_{\bP V_1}(1))$. Applying the splitting principle and the additivity of the total Chern class, we deduce
\begin{align*}
    &\rho^*\gamma_1=c_1^{\PGL_2}(V_1\otimes\pr_1^*\cO_{\bP V_1}(1)\otimes \pr_2^*\cO_{\bP V_{k-1}}(1))=\psi_1+2h\\
    &\rho^*\gamma_2=c_2^{\PGL_2}(V_1\otimes\pr_1^*\cO_{\bP V_1}(1)\otimes \pr_2^*\cO_{\bP V_{k-1}}(1))=h(h+\psi_1).
\end{align*}
This implies that
\begin{align*}
    \pi_*(\gamma_1^m\gamma_2^n)&=k^{-1}\pi'_*\pr_{1*}((2h+\psi_1)^mh^n(h+\psi_1)^n).
\end{align*}
The computation of pushforwards along $\pr_1:[\bP V_1 \times \bP V_{k-1}/\PGL_2]\to [\bP V_1/\PGL_2]$ is easy because $k-1$ is even, hence this map is the projection from a projective bundle. We deduce
\begin{equation*}
    \pr_{1*}(h^i\psi_1^j)=s^{\PGL_2}_{i-k+1}(V_{k-1})\psi_1^j.
\end{equation*}
Also the pushforward along $\pi':[\bP V_1/\PGL_2]\to\cB\PGL_2$ is not hard to determine: consider the cartesian diagram
\[
\xymatrix{
\bP V_1 \ar[r]^{\rho'} \ar[d]^{g} & \spec{k} \ar[d]^{f} \\
[\bP V_1/\PGL_2] \ar[r]^{\pi'} & \cB\PGL_2.
}
\]
The compatibility formula implies that for every element $\xi$ in the Chow ring of $[\bP V_1/\PGL_2]$ we have $f^*\pi'_*(\xi)=\rho'_*g^*(\xi)$. To compute $g^*(\psi_1)$, observe that $g^*(V_1\otimes\cO_{\bP V_1}(1))=\cO_{\bP V_1}(1)^{\oplus 2}$, hence $g^*(\psi_1)=2c_1^{\PGL_2}(\cO_{\bP V_1}(1))$; this implies $f^*\pi'_*(\psi_1)=\rho'_*g^*(\psi_1)=2$.
In degree zero the pullback $f^*:\ch^0(\cB\PGL_2)\to\ch^0(\spec{k})$ is an isomorphism, so we can conclude $\pi'_*\psi_1=2$.

The relation $\psi_1^2=-c_2$ implies that $\pi'_{*}(\psi_1^{2j})=0$ and $\pi'_{*}(\psi_1^{2j+1})=(-1)^j 2c_2^j$.
We deduce 
\begin{align*}
   \pi'_*\pr_{1*}( h^i\psi_1^{2j} )=0,\quad  \pi'_*\pr_{1*}( h^i\psi_1^{2j+1} )=(-1)^j s^{\PGL_2}_{i-k+1}(V_{k-1})2c_2^j.
\end{align*}
Putting all together, we obtain the claimed formulas for the pushforward along $\pi:[\bP V_k/\PGL_2]\to\cB\PGL_2$.
\end{proof}

\subsection{Chern classes of representations}
Here we outline how to explicitly compute the Chern classes of the representations that appeared before. This also gives formulas for the Segre classes by formally inverting the total Chern class.

First, let us consider the case $G=\GL_2$. The integral Chow ring of $\cB\GL_2$ is isomorphic to $\bZ[c_1,c_2]$, where $c_1$ and $c_2$ are the Chern classes of the standard $\GL_2$-representation $E$. Therefore, if $\ell_1$ and $\ell_2$ are the Chern roots of $E^{\vee}$, we have that $c_1=-(\ell_1+\ell_2)$ and $c_2=\ell_1\ell_2$.

We have $V_m=\Sym^{m} E^{\vee}$, hence the Chern roots of this symmetric power are given by $j\ell_1+(m-j)\ell_2$, where $0\leq j\leq m$. From this we deduce that the total Chern class of $V_{m}$ for $m$ even is equal to
\begin{align*}
    c^{\GL_2}(V_m)&=\prod_{j=0}^{m}(1+(j\ell_1+(m-j)\ell_2)t)\\
    &=(1+\frac{m}{2}(\ell_1+\ell_2)t)\prod_{j< \frac{m}{2}} (1+(j\ell_1+(m-j)\ell_2)t)(1+((m-j)\ell_1+j\ell_2)t)\\
    &=(1-\frac{m}{2}c_1t)\prod_{j< \frac{m}{2}} (1-mc_1t+(j(m-j)c_1^2+(2j-m)^2c_2)t^2).
\end{align*}
whether for $m$ odd the same argument gives us
\[ c^{\GL_2}(V_m)=\prod_{j\leq \frac{m}{2}} (1-mc_1t+(j(m-j)c_1^2+(2j-m)^2c_2)t^2). \]
Let $\langle p(t)\rangle_d$ denote the coefficient in front of $t^d$ in $p(t)$. Then we have proved the following:
\begin{Prop}\label{prop:chern V2m 1}
\begin{align*}
     c_d^{\GL_2}(V_m)=
     \left\{
     \begin{array}{ll}
         \left \langle (1-\frac{m}{2}c_1t)\prod_{j< \frac{m}{2}} (1-mc_1t+(j(m-j)c_1^2+(2j-m)^2c_2)t^2) \right \rangle_d & \text{if }m\text{ is even,}\\
         \\
         \left \langle \prod_{j\leq \frac{m}{2}} (1-mc_1t+(j(m-j)c_1^2+(2j-m)^2c_2)t^2) \right \rangle_d & \text{if }m\text{ is odd.}
     \end{array}
     \right.
\end{align*}
\end{Prop}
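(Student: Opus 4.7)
The plan is a direct computation via the splitting principle. Since $V_m = \Sym^m E^\vee$ and $\ell_1, \ell_2$ are the Chern roots of $E^\vee$, the $m+1$ Chern roots of $V_m$ are $j\ell_1 + (m-j)\ell_2$ for $0 \le j \le m$. Hence the total Chern class $c^{\GL_2}(V_m) = \sum_{d} c_d^{\GL_2}(V_m) t^d$ equals the product $\prod_{j=0}^m \bigl(1 + (j\ell_1 + (m-j)\ell_2)t\bigr)$, and the task is to rewrite this product as a polynomial in $c_1 = -(\ell_1+\ell_2)$ and $c_2 = \ell_1\ell_2$, then read off the coefficient of $t^d$.

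The key manipulation is to exploit the involution $j \leftrightarrow m-j$ on the index set $\{0,1,\dots,m\}$ and group factors into symmetric pairs. When $m$ is odd every $j$ pairs nontrivially with $m-j$, whereas when $m$ is even the middle index $j = m/2$ is a fixed point and contributes the unpaired factor $1 + \tfrac{m}{2}(\ell_1+\ell_2)t = 1 - \tfrac{m}{2}c_1 t$. For $j \neq m/2$, I would expand the paired factor $\bigl(1 + (j\ell_1+(m-j)\ell_2)t\bigr)\bigl(1 + ((m-j)\ell_1+j\ell_2)t\bigr)$: its $t$-coefficient is $m(\ell_1+\ell_2) = -m c_1$, and its $t^2$-coefficient is $j(m-j)(\ell_1^2+\ell_2^2) + (j^2+(m-j)^2)\ell_1\ell_2$.

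Substituting the Newton-style identities $\ell_1^2+\ell_2^2 = c_1^2 - 2c_2$ and $j^2+(m-j)^2 = m^2 - 2j(m-j)$, the quadratic coefficient simplifies to $j(m-j)c_1^2 + \bigl(m^2 - 4j(m-j)\bigr)c_2$. Applying the algebraic identity $(a+b)^2 - 4ab = (a-b)^2$ with $a=j$ and $b=m-j$ turns the coefficient of $c_2$ into $(2j-m)^2$. Thus each paired factor is exactly $1 - mc_1 t + \bigl(j(m-j)c_1^2 + (2j-m)^2 c_2\bigr)t^2$.

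Assembling the unpaired middle factor (when $m$ is even) with the symmetric pairs ranging over $j < m/2$ (resp.\ $j \le (m-1)/2$ when $m$ is odd) yields the two generating functions in the statement, and $c_d^{\GL_2}(V_m)$ is by definition the coefficient of $t^d$. This is a routine formal computation; there is no real obstacle, and the only arithmetic check of substance is the completion of the square $m^2 - 4j(m-j) = (2j-m)^2$.
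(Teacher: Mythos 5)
Your argument is correct and is essentially identical to the paper's proof: both identify the Chern roots of $\Sym^m E^\vee$ as $j\ell_1+(m-j)\ell_2$, pair the factors under $j\leftrightarrow m-j$ (with the unpaired middle factor $1-\tfrac{m}{2}c_1t$ when $m$ is even), and expand each pair into $1-mc_1t+\bigl(j(m-j)c_1^2+(2j-m)^2c_2\bigr)t^2$ using $\ell_1^2+\ell_2^2=c_1^2-2c_2$. Your completion of the square $m^2-4j(m-j)=(2j-m)^2$ matches the paper's computation exactly.
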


Next, we consider the case $G=\PGL_2$.
The vector space $V_{2m}=\oH^0(\bP^1,\cO(2m))$ is a $\PGL_2$-representation of rank $2m+1$, where the action is defined as $A\cdot f(x,y):=\det(A)^{m}f(A^{-1}(x,y)$. In what follows we will need explicit formulas for the $\PGL_2$-equivariant Chern classes of $V_{2m}$. These has been computed by Fulghesu and Viviani in \cite{FV}*{Section 6}.

Recall  (\cite{Phan}) that $\ch^*(\cB \PGL_2)$ is isomorphic to $\bZ[c_2,c_3]/(2c_3)$, where $c_i^{\PGL_2}(V_2)=c_i$, and
\begin{Prop}[\cite{FV}*{Corollary 6.3}]\label{prop:chern V2m 2}
\[
    c_d^{\PGL_2}(V_{2m})=\left \{ \begin{array}{ll}
\left\langle t\prod_{j=1}^{m} (t^2+j^2c_2) + t^{\frac{m}{2}+1}\sum_{j=1}^{\frac{m}{2}}\binom{\frac{m}{2}}{j}(t^3+c_2t)^{(\frac{m}{2}-j)}c_3^j \right\rangle_{2m+1-d} & \text{if }m\text{ is even,} \\
\\
\left\langle t\prod_{j=1}^{m} (t^2+j^2c_2) + t^{\frac{m-1}{2}}\sum_{j=1}^{\frac{m+1}{2}}\binom{\frac{m+1}{2}}{j}(t^3+c_2t)^{(\frac{m+1}{2}-j)}c_3^j \right\rangle_{2m+1-d} & \text{if }m\text{ is odd.}
\end{array}
\right.
\]
\end{Prop}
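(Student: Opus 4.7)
The plan is to reduce to a splitting-principle computation on $\cB\GL_2$, which pins down the whole answer modulo $c_3$, and then to determine the $c_3$-contribution by a separate argument. This is the strategy of Fulghesu--Viviani, which can be invoked directly as a citation.

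First I would pull back $V_{2m}$ along the projection $\pi\colon \cB\GL_2 \to \cB\PGL_2$. The $\PGL_2$-action $A\cdot f(x,y)=\det(A)^m f(A^{-1}(x,y))$ is the restriction of the $\GL_2$-representation $\Sym^{2m}(E^\vee)\otimes\det(E)^{\otimes m}$ (the extra $\det(E)^{\otimes m}$ twist is precisely what is needed to make the action descend from $\GL_2$ to $\PGL_2$). By the splitting principle with Chern roots $\ell_1,\ell_2$ of $E^\vee$, the Chern roots of the pullback are $(j-m)\ell_1+(m-j)\ell_2$ for $j=0,\dots,2m$, i.e.\ $k(\ell_1-\ell_2)$ for $k=-m,\dots,m$. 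Hence the total Chern class is
\[ \prod_{k=1}^m\bigl(1-k^2(\ell_1-\ell_2)^2 t^2\bigr). \]
Checking the $m=1$ case (the adjoint/defining representation for $c_2^{\PGL_2}$) shows that $(\ell_1-\ell_2)^2$ is the pullback of $-c_2^{\PGL_2}$, so the pulled-back total Chern class equals $\prod_{k=1}^m(1+k^2 c_2 t^2)$.

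Next, the pullback $\pi^*\colon \bZ[c_2,c_3]/(2c_3)\to \bZ[c_1,c_2]$ sends $c_3\mapsto 0$ and has kernel exactly the $2$-torsion ideal $(c_3)$. Therefore the computation above determines $c_d^{\PGL_2}(V_{2m})$ modulo $c_3$: the torsion-free part is exactly the coefficient of $t^d$ in $\prod_{k=1}^m(1+k^2 c_2 t^2)$, which after the reindexing $t\leftrightarrow 1/t$ becomes the first summand $t\prod_{j=1}^m(t^2+j^2 c_2)$ in the stated formula (read off the coefficient of $t^{2m+1-d}$).

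The main obstacle, and the part that requires real work, is the $c_3$-contribution, since this part is invisible on $\cB\GL_2$. Here I would follow Fulghesu--Viviani and restrict to the normalizer $N(T)\subset\PGL_2$ of a maximal torus, using the fact that the $\bZ/2$-Weyl action permutes paired weight spaces $k\leftrightarrow -k$; the nontrivial Weyl element acts by $\pm 1$ on each weight space according to the parity of $m-k$, and this is what splits the analysis into the $m$ even versus $m$ odd cases and produces the binomial sums $\sum_j\binom{\cdot}{j}(t^3+c_2 t)^{\cdot-j}c_3^j$. The explicit bookkeeping of these signs and multiplicities is the computational heart of \cite{FV}*{Section 6}; once the $c_3$-part has been written down, internal consistency is automatic, since setting $c_3=0$ must return the splitting-principle formula derived in the previous step.
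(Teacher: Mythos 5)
Your proposal is correct and follows essentially the same route as the paper, which simply quotes this formula from \cite{FV}*{Corollary 6.3}: your splitting-principle computation on $\cB\GL_2$ correctly recovers the torsion-free summand $t\prod_{j=1}^m(t^2+j^2c_2)$ (and your identification $(\ell_1-\ell_2)^2=-\pi^*c_2$ matches the paper's conventions, e.g.\ the $c_2\mapsto 4c_2$ substitution under $\SL_2\to\PGL_2$ used in the proof of \Cref{lemma:Delta12}), while the $c_3$-contribution is, exactly as you say, the content of the normalizer-of-torus bookkeeping in \cite{FV}*{Section 6}. Since the paper offers no independent proof beyond the citation, deferring that part to \cite{FV} is the intended reading.
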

The $\PGL_2$-equivariant Segre classes of $V_{2m}$ can then be computed by formally inverting the total Chern classes of the $\PGL_2$-representation.

\section{Relations coming from $[\Delta^1_N/G_N]$}\label{Delta1}
In this Section, we compute relations in the Chow ring of $\cW^{\min}_N$ obtained excising $[\Delta^1_N/G_N]$. More precisely, we show that the ideal of relations obtained by excising this locus has a single generator (\Cref{envelope}) and we give a recipe for computing it (see \Cref{prop:class D1 GL2} and \Cref{prop:class D1 PGL}). We work over a ground field of characteristic different from $2$ or $3$.

\subsection{Excision of $[\Delta_N^1/G_N]$}
Consider the localization exact sequence
\[
\ch^{G_N}_*(\Delta^1_N)\rightarrow \ch^{G_N}_*(V^N_{4N,6N})\rightarrow \ch^{G_N}_*(V^N_{4N,6N}\smallsetminus \Delta^1_N)\rightarrow 0.
\] 
We want to find generators for the ideal given by the image of the first map on the left. To do so, we construct an equivariant envelope of $\Delta^1_N$, in the sense of \cite[Page 603]{EG}, i.e. a proper morphism $Z\to\Delta^1_N$ whose induced pushforward homomorphism between Chow groups is surjective. 
Let 
\begin{equation}\label{eq:map phi}
   \phi:V^N_{2N}\longrightarrow V^N_{4N,6N}
\end{equation}
be the map defined by $\phi(P)=(-3P^2,2P^3)$. Observe that the image of $\phi$ lies in $\Delta_N^1$.
\begin{Lemma}\label{envelope}
The following hold true:
\begin{enumerate}
    \item the map $\phi$ defines a $G_N$-equivariant bijective birational morphism $V^N_{2N}\to\Delta_N^1$ that is an isomorphism away from the origin;
    \item the pushforward morphism $\ch^{*}_{G_N}(V^N_{2N})\to \ch^*_{G_N}(\Delta_N^1)$ is surjective;
    \item the image of $\phi_*$ is the ideal generated by $[\Delta_N^1]_{G_N}$.
\end{enumerate}
\end{Lemma}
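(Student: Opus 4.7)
The plan is to verify the three assertions in order, treating (1) as the main geometric content and deriving (2) and (3) from it by standard Chow-theoretic manipulations.

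For part (1), I would first check that $\phi$ factors through $\Delta_N^1$ via the polynomial identity
\[
4(-3P^2)^3 + 27(2P^3)^2 = -108P^6 + 108P^6 = 0,
\]
so the composition $V^N_{2N}\to V^N_{4N,6N}\to V^N_{12N}$ given by $(A,B)\mapsto 4A^3+27B^2$ is identically zero. The $G_N$-equivariance is immediate from \Cref{subsec:basic}: an element $(\phi_0,\phi_1)\in G_N$ acts on $V^N_{kN}$ via $\phi_1^{\otimes k}$, which is compatible with the squaring $V^N_{2N}\to V^N_{4N}$ and cubing $V^N_{2N}\to V^N_{6N}$. Injectivity of $\phi$ on geometric points follows from $P_1^2=P_2^2$ and $P_1^3=P_2^3$ forcing $P_1=P_2$ in characteristic $\neq 2,3$. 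For surjectivity, a nonzero $(A,B)\in\Delta_N^1(k)$ satisfies $4A^3=-27B^2$, which forces the divisor of $A$ on $\bP^1$ to be of the form $2D$ and that of $B$ to be $3D$ for some effective divisor $D$ of degree $N$; hence $P:=-3B/(2A)$ is a regular section of $\cO(2N)$ with divisor $D$, and $\phi(P)=(A,B)$. The same rational formula defines the inverse morphism on $\Delta_N^1\smallsetminus\{0\}$, yielding the isomorphism away from the origin.

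For part (2), factor $\phi=i\circ\phi'$, where $\phi':V^N_{2N}\to\Delta_N^1$ is proper (verified via the valuative criterion using $P^2=-A/3$ and normality of $V^N_{2N}$), surjective, birational, and $G_N$-equivariant, while $i:\Delta_N^1\hookrightarrow V^N_{4N,6N}$ is the closed immersion. By the standard envelope principle (Fulton, \emph{Intersection Theory}, Example 1.9.1, and its equivariant version in \cite{EG}), $\phi'_*:\ch^*_{G_N}(V^N_{2N})\to\ch^*_{G_N}(\Delta_N^1)$ is surjective: every $G_N$-invariant subvariety of $\Delta_N^1$ is dominated birationally by its proper transform in $V^N_{2N}$, and composing with $i_*$ yields the statement in (2).

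For part (3), both $\ch^*_{G_N}(V^N_{2N})$ and $\ch^*_{G_N}(V^N_{4N,6N})$ are canonically identified with $\ch^*(\cB G_N)$ by homotopy invariance for the vector bundles $[V^N_{2N}/G_N]\to\cB G_N$ and $[V^N_{4N,6N}/G_N]\to\cB G_N$, with inverses given by the zero sections. Since $\phi(0)=0$, the pullback $\phi^*$ intertwines these identifications and is therefore an isomorphism. Writing an arbitrary $\beta\in\ch^*_{G_N}(V^N_{2N})$ as $\beta=\phi^*\alpha$ and applying the projection formulas for the proper morphism $\phi'$ and then for the closed immersion $i$:
\[
\phi_*(\beta) = i_*\phi'_*\bigl((\phi')^*(i^*\alpha)\bigr) = i_*\bigl((i^*\alpha)\cdot\phi'_*(1)\bigr) = \alpha\cdot i_*\phi'_*(1) = \alpha\cdot\phi_*(1).
\]
Hence the image of $\phi_*$ is the principal ideal generated by $\phi_*(1)$, and since $\phi'$ is a bijective birational morphism onto the (generically reduced) $\Delta_N^1$, we have $\phi_*(1)=[\Delta_N^1]_{G_N}$.

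The main obstacle is the scheme-theoretic verification in part (1) that $\phi$ is an isomorphism on the complement of the origin rather than merely a bijection: one must show that the formula $P=-3B/(2A)$ extends to an honest morphism $\Delta_N^1\smallsetminus\{0\}\to V^N_{2N}$. This follows from the multiplicity constraint imposed by $4A^3+27B^2=0$, which forces the pole orders of $-3B/(2A)$ to be canceled exactly, but executing this cleanly requires a local/étale calculation or a valuative-criterion check.
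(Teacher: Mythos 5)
Your argument follows the paper's proof essentially verbatim: the inverse $(A,B)\mapsto -3B/(2A)$ away from the origin gives (1) and hence (2) via the envelope principle, and the identification of both equivariant Chow rings with $\ch^*(\cB G_N)$ by homotopy invariance combined with the projection formula gives (3), exactly as in the paper. The only quibble is a slip in degrees — the effective divisor $D$ with $\operatorname{div}(A)=2D$ and $\operatorname{div}(B)=3D$ has degree $2N$, not $N$ — while your added details (the divisor computation for surjectivity, properness of $\phi$ via the valuative criterion) are correct elaborations of steps the paper leaves implicit.
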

\begin{proof}
Away from the origin, the map $(A,B)\mapsto -3B/2A$ defines an equivariant inverse to $V^N_{2N}\to\Delta_N^1$, so $\phi$ is bijective and is an isomorphism away from the origin. This also implies the surjectivity of the induced pushforward.

To prove the last point, observe that there is a well defined pullback morphism $\phi^*$ because $V^N_{4N,6N}$ is smooth, and $\phi^*$ is clearly surjective because both $[V^N_{2N}/G_N]$ and $[V^N_{4N,6N}/{G_N}]$ are vector bundles over $\cB {G_N}$. Therefore, for every cycle $\zeta$ in $\ch^*_{G_N}(V^N_{2N})$, we have $\phi_*(\zeta)=\phi_*\phi^*(\zeta')=\phi_*(1)\cdot\zeta'$. This proves the last point.
\end{proof}
\subsection{The case ${G_N}=\GL_2$}
When $N$ is odd, the group ${G_N}$ is isomorphic to $\GL_2$. To compute $[\Delta_N^1]_{\GL_2}$ we can apply the localization formula (\cite[Theorem 2]{EGloc}). In general, this formula only gives an expression which is true up to cycles that are zero divisor. In our case we are lucky, as the equivariant Chow ring of $V^N_{4N,6N}$ is a polynomial ring in the two variables $c_1$ and $c_2$, so the expression we obtain in the end holds true unconditionally.
\begin{Prop}\label{prop:class D1 GL2}
For $N$ odd, the image of $\ch_*^{G_N}(\Delta_N^1)\to\ch_*^{G_N}(V^N_{4N,6N})$ is generated as an ideal by $$[\Delta_N^1]_{G_N}=\frac{c_{10N+2}^{G_N}(V^N_{4N,6N})}{c_{2N+1}^{G_N}(V^N_{2N})}.$$
\end{Prop}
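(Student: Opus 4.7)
The plan is to bypass the localization formula entirely and instead exploit the fact that $\phi$ preserves the origin, reducing the computation of $\phi_*(1)$ to a single identity following from the projection formula. By \Cref{envelope}(3), the ideal in question is generated by $[\Delta^1_N]_{G_N}=\phi_*(1)$. Because $V^N_{4N,6N}\to\cB\GL_2$ is a $\GL_2$-equivariant vector bundle, pullback along $\pi':V^N_{4N,6N}\to\cB\GL_2$ identifies $\ch^*(\cB\GL_2)=\bZ[c_1,c_2]$ with $\ch^*_{G_N}(V^N_{4N,6N})$, so I will view $\phi_*(1)$ as an element of $\bZ[c_1,c_2]$.

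Let $s_0:\cB\GL_2\to V^N_{2N}$ and $s'_0:\cB\GL_2\to V^N_{4N,6N}$ be the zero sections of the two equivariant vector bundles, and let $\pi:V^N_{2N}\to\cB\GL_2$ be the structure morphism. Since $\phi$ is a polynomial map with $\phi(0)=0$, we have $\phi\circ s_0=s'_0$ and hence $\phi_*\circ s_{0*}=s'_{0*}$. The key ingredient is the standard identity that for the zero section $z:B\to E$ of a vector bundle $\pi_E:E\to B$, one has $z_*(1)=\pi_E^*c_{\mathrm{top}}(E)$; this follows from the self-intersection formula $z^*z_*(1)=c_{\mathrm{top}}(E)$ together with the observation that $z^*$ and $\pi_E^*$ are mutually inverse isomorphisms on Chow groups. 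Applied to our two zero sections this gives
\[
s_{0*}(1)=\pi^*c^{G_N}_{2N+1}(V^N_{2N}),\qquad s'_{0*}(1)=\pi'^*c^{G_N}_{10N+2}(V^N_{4N,6N}).
\]

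Combining the identity $\phi_*s_{0*}(1)=s'_{0*}(1)$ with the projection formula for $\phi$ (noting that $s_{0*}(1)=\phi^*\pi'^*c^{G_N}_{2N+1}(V^N_{2N})$ is a pullback from the target), I obtain
\[
\pi'^*c^{G_N}_{10N+2}(V^N_{4N,6N})=\pi'^*c^{G_N}_{2N+1}(V^N_{2N})\cdot\phi_*(1).
\]
Stripping off the isomorphism $\pi'^*$, this becomes the equation
\[
c^{G_N}_{10N+2}(V^N_{4N,6N})=c^{G_N}_{2N+1}(V^N_{2N})\cdot\phi_*(1) \quad \text{in } \bZ[c_1,c_2].
\]
Since $\bZ[c_1,c_2]$ is an integral domain and $c^{G_N}_{2N+1}(V^N_{2N})$ is nonzero (as $V^N_{2N}\simeq\det(E)^{N-1}\otimes\Sym^{2N}E^{\vee}$ has Chern roots $(j-N+1)\ell_1+(N+1-j)\ell_2$ for $j=0,\dots,2N$, each a nonzero linear form in the domain $\bZ[\ell_1,\ell_2]$), dividing yields the claimed formula.

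I do not anticipate any serious obstacle. The only point requiring care is the identity $z_*(1)=\pi_E^*c_{\mathrm{top}}(E)$ for zero sections of equivariant vector bundles, which is a routine consequence of the self-intersection formula. Everything else is formal manipulation in a polynomial ring, and this approach has the virtue of yielding the result directly rather than modulo zero-divisors, so no separate argument is needed to handle the ambiguity inherent in the localization formula.
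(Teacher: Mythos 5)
Your proof is correct, and it reaches the formula by a genuinely different and more elementary route than the paper. The paper restricts to the maximal torus $T\subset\GL_2$ and applies the Edidin--Graham localization formula at the unique fixed point (the origin); this a priori only gives an identity after inverting positive-degree classes, so the paper must then argue that the ambiguity disappears because $\ch^*_{G_N}(V^N_{4N,6N})\simeq\bZ[c_1,c_2]$ has no zero divisors, and must also pass between $T$-equivariant and $\GL_2$-equivariant Chern classes. You instead observe that $\phi$ carries the zero section of $V^N_{2N}$ to that of $V^N_{4N,6N}$ and combine the self-intersection identity $z_*(1)=\pi_E^*c_{\mathrm{top}}(E)$ with the projection formula, obtaining the exact equation $c^{G_N}_{10N+2}(V^N_{4N,6N})=c^{G_N}_{2N+1}(V^N_{2N})\cdot\phi_*(1)$ in $\bZ[c_1,c_2]$ with no localization and no detour through $T$. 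Unwound, both arguments rest on the same geometric fact --- the class of the origin in a representation is its top Chern class --- but your packaging needs only functoriality of proper pushforward plus the verification that $c^{G_N}_{2N+1}(V^N_{2N})$ is nonzero, which your Chern-root computation for $\det(E)^{N-1}\otimes\Sym^{2N}E^{\vee}$ correctly establishes (both approaches ultimately need the domain property of $\bZ[c_1,c_2]$ to divide). The one hypothesis you use implicitly, as does the paper, is properness of $\phi$, which is what makes $\phi_*$ and the projection formula available and is already built into \Cref{envelope}. Be aware that your method does not transport directly to the even case of \Cref{prop:class D1 PGL}, where $\ch^*(\cB G_N)$ has zero divisors and the fixed locus of the relevant torus action on the $\GL_3\times\bG_m$-counterpart is no longer a single point; there the paper's localization machinery is doing real work.
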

\begin{proof}
Let $T\subset \GL_2$ be the maximal subtorus of diagonal matrices. The point in $V^N_{rN}$ fixed by the $T$-action is the origin, whose tangent space is isomorphic to $V^N_{rN}$ itself. Applying localization formula (\cite[Theorem 2]{EGloc}), we deduce that
\begin{align*}
    \phi_*(1)=\frac{\phi_*([(0)]_T)}{c_{2N+1}^T(TV^N_{2N,(0)})}=\frac{[(0)]_T}{c_{2N+1}^T(TV^N_{2N,(0)})}=\frac{c_{10N+2}^T(V^N_{4N,6N})}{c_{2N+1}^T(V^N_{2N})}.
\end{align*}
As the $T$-equivariant Chern classes of a $\GL_2$-equivariant vector bundle are equal to the $\GL_2$-equivariant ones, we obtain an expression for $[\Delta_N^1]_{G_N}$. By \Cref{envelope}, this class generates the ideal $\im(\phi_*)$, and from the same Lemma we know that this ideal coincides with the image of $\ch^{*-8N-1}_{G_N}(\Delta_N^1)\to \ch^*_{G_N}(V^N_{4N,6N})$.
\end{proof}

\subsection{The case ${G_N}=\PGL_2\times\bG_m$}
For $N$ even, the group that we have to consider is $\PGL_2\times\bG_m$. Because of the fact that in $\ch^*(\cB(\PGL_2\times\bG_m))$ there are zero divisors, e.g. the integer $2$, we cannot apply the localization formula directly. To overcome this obstacle, we will use a trick introduced in \cite{DL}.

Let $f:X'\to X$ be a $\PGL_2\times\bG_m$-equivariant morphism between $\PGL_2\times\bG_m$-equivariant schemes. Then by \cite[Theorem 2.11]{DL} there exist $\GL_3\times\bG_m$-schemes $Y$ and $Y'$ with an equivariant morphism $Y'\to Y$ and a commutative diagram
\begin{equation}
\begin{tikzcd}
\left[ X'/\PGL_2\times\bG_m \right] \ar[r,"\simeq"] \ar[d] & \left[ Y'/\GL_3\times\bG_m \right] \ar[d] \\
\left[ X/\PGL_2\times\bG_m \right]  \ar[r,"\simeq"] & \left[ Y/\GL_3\times\bG_m \right].
\end{tikzcd}
\end{equation}
We refer to the $\GL_3\times\bG_m$-scheme $Y$ (resp. $Y'$) as the $\GL_3\times\bG_m$-counterpart of $X$ (resp. $X'$), as in \cite{DL}*{Definition 2.9}. Recall that $V^N_{2dN}$ is the $\PGL_2\times\bG_m$-representation $\oH^0(\bP^1,\cO(2dN))\otimes L^{\otimes(-2d)}$, where $L$ is the standard rank one representation of $\bG_m$: our aim is to describe explicitly the $\GL_3\times\bG_m$-counterpart of $V^N_{2dN}$.

The affine space $\bA^6$ is the parameter space of quadratic forms in three variables, and let $D$ be the discriminant divisor, i.e. the divisor that parametrizes quadratic forms of rank $\leq 2$. We regard $\bA^6$ as a $\GL_3\times\bG_m$-scheme, where $\bG_m$ acts trivially and $\GL_3$ acts as $A\cdot q(x,y,z)=\det(A)q(A^{-1}(x,y,z))$. Observe that $D$ is invariant with respect to this action.

Over $\bA^6\smallsetminus\{0\}$ we have an injective morphism of $\GL_3\times\bG_m$-equivariant free sheaves
\begin{equation}\label{eq:def of W}
    \oH^0(\bP^2,\cO(dm-2))\otimes L^{\otimes(-2d)}\otimes\cO_{\bA^6\smallsetminus\{0\}}\longrightarrow \oH^0(\bP^2,\cO(dm))\otimes L^{\otimes(-2d)}\otimes\cO_{\bA^6\smallsetminus\{0\}},
\end{equation}
where the $\GL_3$-action on these sheaves is inherited from the natural action of $\GL_3$ on $\bP^2$, the latter regarded as the projectivization of the standard $\GL_3$-representation.

The quotient of the map in (\ref{eq:def of W}) is denoted $W^m_{2dm}$ and by \cite{DL}*{Proposition 3.4} the restriction of $W^m_{2dm}$ to $\bA^6\smallsetminus D$ is the $\GL_3\times\bG_m$-counterpart of $V^m_{2dm}$. Moreover, we adopt the notation $W^m_{2dm,2em}$ for the direct sum of $W^m_{2dm}$ and $W^m_{2em}$. The $K$-points in the total space of $W^m_{2dm}$ should be thought as pairs $(q,[f])$ where $q$ is a non-zero ternary quadratic form on $K$, the polynomial $f$ is a homogeneous form in three variables of degree $dm$ and $[f]=[f']$ if and only if $q$ divides the difference $f-f'$.

In this way we can also describe the counterpart of the equivariant map $\phi:V^N_{2N}\to V^N_{4N,6N}$ introduced in (\ref{eq:map phi}), which is the restriction to $\bA^6\smallsetminus D$ of the morphism
\[ \psi: W^N_{2N}\longrightarrow W^N_{4N,6N},\quad (q,[f])\longmapsto (q,[-3f^2],[2f^3]). \]
In particular, this shows that the $\GL_3\times\bG_m$-equivariant fundamental class of $\psi(W^N_{2N})$ is equal to the $\PGL_2\times\bG_m$-fundamental class of $\Delta_N^1$.

As in the case where $N$ is odd, we plan to use the localization formula to compute the image of $\ch_*^{G_N}(\Delta_N^1)\to\ch_*^{G_N}(V^N_{4N,6N})$. To pass to integral coefficients however, it is convenient to work in an ambient space $X$ such that $\ch^*([X/\GL_3\times\bG_m])$ is a free $\ch^*(\cB \GL_3\times\bG_m)$-module. Therefore, for our purposes, we set $X=\bP^5$ to be the projectivization of the $\GL_3\times\bG_m$-scheme $\bA^6$. From \cite{DL}*{Definition 3.1} we know that there exists a locally free sheaf $\overline{W}^m_{2dm}$ whose pullback along $\bA^6\smallsetminus\{0\}\to\bP^5$ is isomorphic to $W^m_{2dm}$. Points in the total space of $\overline{W}^m_{2dm}$ are pairs $([q],[f])$, where $[q]=[q']$ if and only if $q=\lambda q'$ for some invertible scalar $\lambda$. We also have an equivariant map $\overline{\psi}:\overline{W}^N_{2N}\rightarrow \overline{W}^N_{4N,6N}$, whose pullback along $\bA^6\smallsetminus\{0\}\to\bP^5$ is isomorphic to $\psi$.
Recall from \cite[4.1]{EdFuRat} that 
\begin{align*}
    \ch_{\GL_3\times\bG_m}^*(\bP^5) &\simeq \oplus_{i=0}^{5} \ch_{\GL_3\times\bG_m}^*(\spec(k))\cdot h^i\\
    &\simeq \bZ[\tau_1,h,c_1,c_2,c_3]/((h^3-2c_1h^2+4c_2h-8c_3)(h^3-2c_1h^2+(c_1^2+c_2)h+c_3-c_1c_2))    
\end{align*}
where $h=c_1^{\GL_3\times\bG_m}(\cO(1))$. Observe in particular that this Chow ring is free as $\ch_{\GL_3\times\bG_m}^*(\spec(k))$-module.

Now we explain how to compute the Chern classes of $\overline{W}^m_{2dm}$. For this, the basic ingredient is the short exact sequence
\[0\to\cO_{\bP^5}(-1)\otimes \Sym^{m-2} E^{\vee} \otimes L^{\otimes(-m)} \to  \Sym^m E^{\vee}\otimes L^{\otimes(-m)}\otimes\cO_{\bP^5} \to \overline{W}^m_{2m}\to 0 \]
of locally free sheaves on $\bP^5$ (see \cite{DL}*{2.3}), where $E$ is the standard $\GL_3$-representation and $L$ is the standard $\bG_m$-representation of weight one. This implies 
\begin{align}\label{eq:class W}
    c_{2m+1}^{\GL_3\times\bG_m}(\overline{W}^m_{2m})=&\left\{ \frac{c^{\GL_3\times\bG_m}(\Sym^m E^{\vee}\otimes L^{\otimes(-m)})}{c^{\GL_3\times\bG_m}(\Sym^{m-2} E^{\vee} \otimes L^{\otimes(-m)}\otimes\cO(-1))} \right\}_{2m+1}\\
    &=\left\{\frac{\prod_{i+j\leq m}(1+x(i\ell_1+j\ell_2+(m-i-j)\ell_3-m\tau_1))}{\prod_{i'+j'\leq m-2}(1+x(i'\ell_1+j'\ell_2+(m-2-i'-j')\ell_3-m\tau_1-h))}\right\}_{2m+1} \nonumber
\end{align}
This expression in brackets should be interpreted as a formal series in $x$, from which we are extracting the coefficient in front of $x^{2m+1}$. Moreover, the symbols $\ell_1$, $\ell_2$ and $\ell_3$ stands for the Chern roots of $E^{\vee}$, so that the elementary symmetric polynomial in $\ell_1$, $\ell_2$ and $\ell_3$ of degree $d$ is equal to $(-1)^dc_d$.
\begin{Remark}\label{rmk:non zero div}
    Observe that $c_{2m+1}^{\GL_3\times\bG_m}(\overline{W}^m_{2m})$ is not a zero divisor. Indeed, if it was a zero divisor, this would imply that there exists a non zero element $\xi$ in the equivariant Chow ring of $\bP^5$ such that 
    \[ \xi \cdot c_{2m+1}^{\GL_3\times\bG_m}(\overline{W}^m_{2m}) \cdot c_{\frac{m(m-1)}{2}}^{\GL_3\times\bG_m}(\cO_{\bP^5}(-1)\otimes \Sym^{m-2} E^{\vee} \otimes L^{\otimes(-m)}) = \xi \cdot c_{\frac{(m+1)(m+2)}{2}}^{\GL_3\times\bG_m}(\Sym^m E^\vee \otimes L^{\otimes(-m)}) = 0, \]
    which would contradict the fact that $\ch_{\GL_3\times\bG_m}^*(\bP^5)$ is a free $\ch_{\GL_3\times\bG_m}^*(\spec(k))$-module.
\end{Remark}
\begin{Prop}\label{prop:class D1 PGL}
For $N$ even, the image of $\ch_*^{G_N}(\Delta_N^1)\to\ch_*^{G_N}(V^N_{4N,6N})$ is generated as an ideal by $$\left.\frac{c_{10N+2}^{\GL_3\times\bG_m}(\overline{W}^N_{4N,6N})}{c_{2N+1}^{\GL_3\times\bG_m}(\overline{W}^N_{2N})}\right|_{h=c_1=0}.$$
The expression above is a polynomial and after evaluation at $h=c_1=0$, it should be viewed as an element in $\bZ[\tau_1,c_2,c_3]/(2c_3)$, the ${G_N}$-equivariant Chow ring of $V^N_{4N,6N}$.
\end{Prop}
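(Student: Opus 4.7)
The strategy follows that of \Cref{prop:class D1 GL2} but is executed on $\bP^5$ in order to overcome the zero divisors in $\ch^*(\cB G_N)$. By the analogue of \Cref{envelope} in the $\GL_3\times\bG_m$-equivariant setting, combined with the counterpart isomorphism of \cite{DL} identifying $[\Delta_N^1/G_N]$ with $[\psi(W^N_{2N})/\GL_3\times\bG_m]$, the ideal in question is generated by $\psi_*(1)$. Because $c_{2N+1}^{G_N}(V^N_{2N})$ is potentially a zero divisor in $\ch^*_{G_N}(V^N_{4N,6N})=\bZ[\tau_1,c_2,c_3]/(2c_3)$, the localization argument used in \Cref{prop:class D1 GL2} cannot be invoked directly. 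Instead, I would first compute $\overline{\psi}_*(1)$ in the equivariant Chow ring of $\overline{W}^N_{4N,6N}$, regarded as a vector bundle over $\bP^5$, where $\ch^*_{\GL_3\times\bG_m}(\bP^5)$ is a free module over $\ch^*(\cB(\GL_3\times\bG_m))$ and so integral manipulations go through cleanly.

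To carry out this computation, I would apply equivariant localization for the fiber-scaling $\bG_m$-action that assigns weight $1$ to $\overline{W}^N_{2N}$ and weights $(2,3)$ to the two summands of $\overline{W}^N_{4N,6N}=\overline{W}^N_{4N}\oplus\overline{W}^N_{6N}$; with these weights, $\overline{\psi}(f)=(-3f^2,2f^3)$ is equivariant. Both fixed loci are the zero sections (each isomorphic to $\bP^5$), and $\overline{\psi}$ restricts to the identity between them. The localization formula then yields, in $\ch^*_{\GL_3\times\bG_m}(\overline{W}^N_{4N,6N})\simeq\ch^*_{\GL_3\times\bG_m}(\bP^5)$, the identity
\[
\overline{\psi}_*(1)\cdot c_{2N+1}^{\GL_3\times\bG_m}(\overline{W}^N_{2N}) = c_{10N+2}^{\GL_3\times\bG_m}(\overline{W}^N_{4N,6N}).
\]
By \Cref{rmk:non zero div}, the left-hand factor is not a zero divisor, so $\overline{\psi}_*(1)$ is uniquely determined as the quotient displayed in the proposition, and is in particular a polynomial in $\tau_1,h,c_1,c_2,c_3$ modulo the relation defining $\ch^*_{\GL_3\times\bG_m}(\bP^5)$.

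Finally, to pass from $\overline{\psi}_*(1)$ on $\bP^5$ to the $G_N$-equivariant class $[\Delta_N^1]_{G_N}$ on $V^N_{4N,6N}$, restrict $\overline{\psi}$ along the open immersion $\bA^6\smallsetminus D\hookrightarrow\bP^5$; this recovers $\psi$, and via the counterpart isomorphism one gets $\phi_*(1)=[\Delta_N^1]_{G_N}$. The induced restriction on Chow rings $\ch^*_{\GL_3\times\bG_m}(\bP^5)\to\ch^*_{\GL_3\times\bG_m}(\bA^6\smallsetminus D)=\bZ[\tau_1,c_2,c_3]/(2c_3)$ is the operation of evaluating a polynomial representative at $h=c_1=0$: the $\bG_m$-torsor $\bA^6\smallsetminus\{0\}\to\bP^5$ trivializes $\cO_{\bP^5}(-1)$ equivariantly via the canonical section $v\mapsto v$, forcing $h=0$, while the further restriction to $\bA^6\smallsetminus D$ identifies $[\bA^6\smallsetminus D/\GL_3\times\bG_m]\simeq\cB(\PGL_2\times\bG_m)$ through the embedding $\PGL_2\hookrightarrow\GL_3$ with image in $\mathrm{SL}_3$, hence $c_1=0$. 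This yields precisely the claimed formula. The principal obstacle is Step~2: since $\overline{\psi}$ ramifies along the zero section and so is not a regular embedding, the integral equality of top Chern class products must be extracted carefully from the localization theorem, crucially invoking the non-zero-divisor property of \Cref{rmk:non zero div} to upgrade the localized equality to the integral one.
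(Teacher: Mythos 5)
Your proposal reaches the correct formula by the paper's overall strategy---pass to the $\GL_3\times\bG_m$-counterpart over $\bP^5$, compute $\overline{\psi}_*(1)$ there as a quotient of top Chern classes, then restrict along $\bA^6\smallsetminus D\hookrightarrow\bA^6\smallsetminus\{0\}\to\bP^5$---but the localization you use is genuinely different. The paper localizes with respect to the maximal torus $T\subset\GL_3\times\bG_m$, whose fixed points on the total spaces $\overline{W}^N_{2dN}$ are the isolated points $([q],[0])$ with $q$ a monomial; summing the local contributions gives $\overline{\psi}_*(1)=c_{10N+2}^T(\overline{W}^N_{4N,6N})/c_{2N+1}^T(\overline{W}^N_{2N})$, and since the inverted characters are nonzero linear forms in $\ell_1,\ell_2,\ell_3,\tau_1$, no integer is inverted and freeness of $\ch^*_T(\bP^5)$ over $\ch^*_T(\spec k)$ gives the integral descent. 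You instead adjoin an auxiliary fiber-scaling $\bG_m$ with weights $(1;2,3)$, for which $\overline{\psi}$ is equivariant with both fixed loci equal to the zero sections; the self-intersection formula then produces the same quotient without enumerating fixed points, which is arguably cleaner. Your final restriction step is also correct ($c_1\mapsto 0$ because $\PGL_2$ admits no nontrivial characters); the paper records the intermediate substitution along the torsor as $h=c_1$ rather than $h=0$, but the two conventions agree after imposing $c_1=0$, so this does not affect the statement.

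The one point you flag but leave unresolved is exactly where your route could fail, so let me pin it down. A blunt application of the localization theorem for the scaling $\bG_m$ inverts the characters $t$, $2t$, $3t$, hence the integers $2$ and $3$---and the torsion the whole $\bP^5$ detour is designed to capture is $2$-torsion. This is harmless here, but for a reason you must supply: either (a) observe that $\ch^*_{\GL_3\times\bG_m}(\bP^5)$ is a free module over $\bZ[\tau_1,c_1,c_2,c_3]$, hence $\bZ$-torsion-free, so tensoring with $\bZ[\tfrac{1}{6}]$ is injective and the identity descends integrally; or (b), more economically, localize only at the single class $c^{T}_{2N+1}(\overline{W}^N_{2N}\otimes\chi_t)=\prod_k(u_k+t)$, which already kills $\ch^T_*(\overline{W}^N_{2N}\smallsetminus 0)$ by the projective bundle relation, so that $1=s_{Z*}\bigl(\prod_k(u_k+t)^{-1}\bigr)$ and the pushforward along $\overline{\psi}\circ s_Z=s_W$ requires no further inversion; as $\prod_k(u_k+t)$ is monic in $t$ it is a non-zero-divisor in $\ch^*_{\GL_3\times\bG_m}(\bP^5)[t]$, the identity $\overline{\psi}_*(1)\cdot\prod_k(u_k+t)=\prod_i(w_i+2t)\prod_j(v_j+3t)$ holds integrally, and setting $t=0$ gives the claim. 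Note also that \Cref{rmk:non zero div} is then needed only to identify the $t=0$ specialization as the unique element with the stated product (i.e.\ to make sense of the displayed fraction), not to justify the localization itself; your writeup conflates these two uses.
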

\begin{proof}
The fact that $[\Delta_N^1]_{G_N}$ generates as an ideal the image of $\ch_*^{G_N}(\Delta_N^1)\to\ch_*^{G_N}(V^N_{4N,6N})$ has already been proved in \Cref{envelope}. Moreover, the previous discussion shows that $[\Delta_N^1]_{G_N}=[\psi(W^N_{2N})]_{\GL_3\times\bG_m}$.

Let $T\subset\GL_3\times\bG_m$ be the maximal subtorus of pairs formed by diagonal matrices and an invertible scalar. The fixed points for the action of $T$ on $\overline{W}^m_{2dm}$ are of the form $([q],[0])$ where $q$ is a monomial. Observe that the tangent space of $\overline{W}^m_{2dm}$ at $p$ is isomorphic to the direct sum $T\bP^5_{[q]}\oplus \overline{W}^m_{2dm,[q]}$. Moreover, the fundamental class of $([q],[0])$ in the equivariant Chow ring of $\overline{W}^m_{2dm}$ is equal to the product $[[q]]_T\cdot c_{2dm+1}^T(\overline{W}^m_{2dm,[q]})$. The localization formula (\cite{EGloc}*{Theorem 2}) then gives us the equality
\begin{align}\label{eq:frac}
    \overline{\psi}_*(1)&=\overline{\psi}_*\left( \sum_{q=x_ix_j, i\leq j} \frac{[([q],[0])]_T}{c_{2N+6}^T(T\overline{W}^N_{2N+1,[q]})} \right) \nonumber\\
    &=\sum_{q=x_ix_j, i\leq j} \frac{\overline{\psi}_*([([q],[0])]_T)}{c_5^T(T\bP^5_{[q]})c_{2N+1}( \overline{W}^N_{2N,[q]})} \nonumber\\
    &=\sum_{q=x_ix_j, i\leq j} \frac{[[q]]_T\cdot c_{10N+2}^T(\overline{W}^N_{4N,6N,[q]})}{c_5^T(T\bP^5_{[q]})c_{2N+1}( \overline{W}^N_{2N,[q]})}=\frac{c_{10N+2}^T(\overline{W}^N_{4N,6N})}{c_{2N+1}^T(\overline{W}^N_{2N})},
\end{align}
where in the last equality we applied again the localization formula to obtain an expression in the equivariant Chow ring of $\overline{W}^N_{4N,6N}$. Observe that a priori the localization formulas would only give an equality in the ring $\ch_T^*(\bP^5)\otimes (\ch_T^*(\spec(k))^{+})^{-1}$ obtained by inverting the positive degree elements in $\ch_T^*(\spec(k))$; nevertheless, as $\ch_T^*(\bP^5)$ is a free $\ch^*_T(\spec(k))$-module, the natural homomorphism $\ch_T^*(\bP^5)\to \ch_T^*(\bP^5)\otimes (\ch_T^*(\spec(k))^{+})^{-1}$ is injective: this proves that (1) the last term is not just a rational function but a polynomial, and that (2) it coincides with $\overline{\psi}_*(1)$. Observe moreover that as $c_{2N+1}^T(\overline{W}^N_{2N})$ is not a zero divisor (\Cref{rmk:non zero div}), the expression 
\[ \frac{c_{10N+2}^T(\overline{W}^N_{4N,6N})}{c_{2N+1}^T(\overline{W}^N_{2N})} \]
is well defined, in the sense that it coincides with the unique element $\xi$ such that $\xi \cdot c_{2N+1}^T(\overline{W}^N_{2N}) = c_{10N+2}^T(\overline{W}^N_{4N,6N})$.

The $T$-equivariant top Chern classes of $\overline{W}^m_{2dm}$ are equal to the $\GL_3\times\bG_m$-ones. Observe also that the last term in (\ref{eq:frac}) can be regarded as a polynomial in $h$, the hyperplane class of $\bP^5$, so that the element 
\begin{equation}\label{eq:frac 2}
    \left.\frac{c_{10N+2}^{\GL_3\times\bG_m}(\overline{W}^N_{4N,6N})}{c_{2N+1}^{\GL_3\times\bG_m}(\overline{W}^N_{2N})}\right|_{h=c_1}
\end{equation}
is well defined, and it coincides with the pullback of $\overline{\psi}_*(1)$ along the $\bG_m$-torsor $\bA^6\smallsetminus\{0\}\to\bP^5$, which in turn is equal to $\psi_*(1)$. If we further restrict this cycle to $\bA^6\smallsetminus D$ (observe that this operation sends $c_1$ to zero), we get an explicit expression for the $\GL_3\times\bG_m$-fundamental class of $\psi(W^N_{2N})$, which we already observed to be equal to the $\PGL_2\times\bG_m$-equivariant fundamental class of $\Delta^1_N$. 
\end{proof}

\section{Relations coming from $[\Delta^2_N/G_N]$}\label{Delta2}
In this Section we compute the relations in the Chow ring of $\cW^{\min}_N$ coming from the excision of $[\Delta_N^2/G_N]$. We first define an equivariant stratification for $\Delta_N^2$, which we leverage to compute the generators of the ideal of the relations. The final result is summarized in \Cref{prop:relations D2}.

In the last part of the Section, we prove the first main result of the paper (\Cref{thm:main}). In this section we work over a ground field of any characteristic, with the only exception of \Cref{thm:main}.
\subsection{An equivariant stratification of $\Delta^2_N$}\label{sec:equi}
First, we recall the definition of equivariant stratification in general.
\begin{Def}
Let $X$ be a $G$-scheme. An equivariant stratification of $X$ is a finite family $\{Z_{\tau}\}_{\tau\in J}$ of locally closed, pairwise disjoint, and equivariant subschemes of $X$ such that $\bigcup_{\tau\in J} Z_{\tau}=X$ and 
\[
\overline{Z_{\tau}}\smallsetminus Z_{\tau}=\bigcup Z_{\tau'}.
\]
\end{Def}
We can endow $\bP V_k$ with a $G_N$ action as follows:
\begin{itemize}
    \item for $N$ odd, we can regard $V_k$ as a $\GL_2$-representation with the action defined at the beginning of Section \ref{equivint}; we can then use the isomorphism $G_N \to \GL_2$ in order to give to $V_k$ the structure of a $G_N$-representation. This of course induces an action of $G_N$ on $\bP V_k$. Observe that for $k=rN$, the two $G_N$-representations $V_k$ and $V^N_k$ are \emph{not} the same (which also motivates the difference in the notation).
    \item For $N$ even, we can regard $\bP V_k$ as a $\PGL_2\times\bG_m$-scheme using the $\PGL_2$-action defined at the beginning of Section \ref{equivint}, and letting $\bG_m$ act trivially. Therefore, we can endow $\bP V_k$ with the structure of a $G_N$-scheme via the isomorphism $G_N \to \PGL_2\times\bG_m$. If $k$ is even, we can also endow $V_k$ with a $G_N$-action, exactly in the same way.
\end{itemize}
Let $\Sigma_k^{(m+1)}$ denote the $(m+1)$-thickening of the subscheme $\Sigma_k\subset\bP V_k\times\bP^1$ defined in Section \ref{second case}, i.e. the subscheme defined by the ideal sheaf $\cI_{\Sigma_k}^{m+1}\simeq \cO_{\bP V_k}(-m-1)\boxtimes \cO_{\bP^1}(-(m+1)k)$. We then have a short exact sequence of $G_N$-equivariant sheaves
\[ 0\longrightarrow\cO_{\bP V_k}(-m-1)\boxtimes \cO_{\bP^1}(-(m+1)k)\longrightarrow \cO_{\bP V_k\times\bP^1}\longrightarrow i_*\cO_{\Sigma_k^{(m+1)}}\longrightarrow 0. \]
We can twist the sequence above by $\pr_2^*\cO_{\bP^1}(2d)$ and push everything down on $\bP V_k$; if we further assume that $\oH^1(\bP^1,\cO_{\bP^1}(2d-(m+1)k)=0$, by cohomology and base chage we obtain the following short exact sequence of $G_N$-equivariant locally free sheaves on $\bP V_k$:
\begin{equation}\label{eq:seq PP}
    0\longrightarrow\cO_{\bP V_k}(-m-1)\otimes V_{2d-(m+1)k} \longrightarrow V_{2d}\otimes\cO_{\bP V_k}\longrightarrow \cP^m_k(\cO_{\bP^1}(2d)) \longrightarrow 0
\end{equation}
where we define $\cP^m_k(\cO_{\bP^1}(2d))$ as the locally free sheaf $\pr_{1*}(\pr_2^*\cO_{\bP ^1}(2d)|_{\Sigma_k^{(m+1)}})$. This bundle coincides with the bundle of principal parts considered in \cite{CK}.

In particular, if we specialize this short exact sequence to the cases $(d,m)=(2N,3)$, $(3N,5)$, we get short exact sequences
\begin{align*}
    0\longrightarrow\cO_{\bP V_k}(-4)\otimes V_{4(N-k)} \longrightarrow V_{4N}\otimes\cO_{\bP V_k}\longrightarrow \cP^3_k(\cO_{\bP^1}(4N)) \longrightarrow 0 \\
    0\longrightarrow\cO_{\bP V_k}(-6)\otimes V_{6(N-k)} \longrightarrow V_{6N}\otimes\cO_{\bP V_k}\longrightarrow \cP^5_k(\cO_{\bP^1}(6N)) \longrightarrow 0
\end{align*}
Define $\bL$ as follows:
\begin{itemize}
    \item for $N$ odd, it is defined as $\det(E)^{\otimes \frac{N-1}{2}}$, where $E$ is the standard representation of $\GL_2$;
    \item for $N$ even, it is defined as $L^{\otimes(-1)}$, the rank one representation of $\bG_m$ of weight $-1$.
\end{itemize}
There is an action of $\GL_2/\bmu_N$ on $\bP V_k$: for $N$ odd, we have $\GL_2/\bmu_N\simeq\GL_2$, and for $N$ even we have $\GL_2/\bmu_N\simeq \PGL_2\times\bG_m$; the action of these two groups on $\bP V_k$ coincide with the ones mentioned at the beginning of Section \ref{equivint}.

We have then short exact sequences of $\GL_2/\bmu_N$-equivariant locally free sheaves
\begin{align*}
    0\longrightarrow\cO_{\bP V_k}(-4)\otimes V_{4(N-k)} \otimes\bL^{\otimes 4} \longrightarrow V_{4N}\otimes\bL^{\otimes 4}\otimes\cO_{\bP V_k} \longrightarrow \cP^3_k(\cO_{\bP^1}(4N))\otimes\bL^{\otimes 4} \longrightarrow 0 \\
    0\longrightarrow\cO_{\bP V_k}(-6)\otimes V_{6(N-k)} \otimes\bL^{\otimes 6} \longrightarrow V_{6N}\otimes\bL^{\otimes 6}\otimes\cO_{\bP V_k} \longrightarrow \cP^5_k(\cO_{\bP^1}(6N))\otimes\bL^{\otimes 6} \longrightarrow 0 \\
\end{align*}
Let $Z_k$ denote the total space of the locally free sheaf $$\left(\cO_{\bP V_k}(-4)\otimes V_{4(N-k)} \otimes\bL^{\otimes 4}\right)\oplus \left(\cO_{\bP V_k}(-6)\otimes V_{6(N-k)} \otimes\bL^{\otimes 6}\right).$$
Then $Z_k$ is a $G_N$-equivariant vector subbundle of $V^N_{4N,6N}\times \bP V_k$ and we have an equivariant morphism
\begin{equation*}
    p_k:Z_k \longrightarrow V^N_{4N,6N}
\end{equation*}
whose image corresponds to the invariant subscheme of pair of forms $(A,B)$ such that there exists a form $H$ of degree $k$ with $A$ vanishing with order $\geq 4$ along ${H=0}$ and $B$ vanishing with order $\geq 6$ along ${H=0}$.

Moreover, this morphism is one-to-one on the locally closed subscheme of pairs $(A,B)$ which satisfy the previous condition together with the further restraint that there exists no form $H'$ of degree $k+1$ such that $A$ (resp. $B$) vanish with order $4$ (resp. $6$) along ${H'=0}$.

\begin{Lemma}
The image of the pushforward $\ch^{*-9}([\Delta^2_N/G_N])\longrightarrow\ch^*([V_{4N,6N}^N/G_N])$ is equal to the sum of the images of the equivariant pushforwards $p_{k*}$, for $k=1,\ldots,N$.
\end{Lemma}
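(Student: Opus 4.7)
The plan is to construct a $G_N$-equivariant stratification of $\Delta^2_N$ whose locally closed pieces are precisely the open parts of the images of the $p_k$'s, and then to recognize the disjoint union $\sqcup_k p_k$ as an equivariant envelope in the sense of Edidin--Graham. First I would set $Y_k := p_k(Z_k) \subseteq V^N_{4N,6N}$, a $G_N$-invariant closed subscheme since $p_k$ is proper and equivariant; by the very definition of $\Delta^2_N$ one has $Y_1 = \Delta^2_N$. The inclusion $Y_{k+1} \subseteq Y_k$ is immediate: if $(A,B)$ admits a degree-$(k+1)$ form $H'$ with $(H')^4 \mid A$ and $(H')^6 \mid B$, any degree-$k$ divisor of $H'$ (which exists after passing to the algebraic closure) witnesses $(A,B) \in Y_k$. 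Setting $Y_{N+1} := \emptyset$, this gives an equivariant stratification $\Delta^2_N = \bigsqcup_{k=1}^N \Delta^{2,k}_N$ with $\Delta^{2,k}_N := Y_k \smallsetminus Y_{k+1}$.

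The second step is to show that $p_k$ is birational over the top stratum $\Delta^{2,k}_N$. By unique factorization of homogeneous forms on $\bP^1$, any degree-$k$ form $H = \prod_{p \in \bP^1} \ell_p^{b_p}$ satisfying $H^4 \mid A$ and $H^6 \mid B$ must have $b_p \leq c_p := \min\bigl(\lfloor \operatorname{ord}_p A / 4 \rfloor,\, \lfloor \operatorname{ord}_p B / 6 \rfloor\bigr)$. Membership in $Y_k$ forces $\sum_p c_p \geq k$, while non-membership in $Y_{k+1}$ forces $\sum_p c_p \leq k$. Hence $\sum_p c_p = k$ exactly, the unique admissible choice is $b_p = c_p$, and so $p_k^{-1}(A,B)$ consists of a single reduced point. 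The restriction $p_k \colon p_k^{-1}(\Delta^{2,k}_N) \to \Delta^{2,k}_N$ is therefore proper, bijective, and generically of degree one, hence birational.

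To conclude, I would invoke equivariant envelope theory: the disjoint union $\sqcup_k p_k \colon \sqcup_k Z_k \to \Delta^2_N$ is an equivariant envelope, because for any closed $G_N$-invariant subvariety $V \subseteq \Delta^2_N$, taking the maximal $k$ with $V \subseteq Y_k$ makes $V \cap \Delta^{2,k}_N$ dense in $V$, and by the previous step the closure of $p_k^{-1}(V \cap \Delta^{2,k}_N)$ in $Z_k$ maps birationally onto $V$. The envelope theorem then yields surjectivity of $\sum_k p_{k*} \colon \bigoplus_k \ch^*_{G_N}(Z_k) \twoheadrightarrow \ch^*_{G_N}(\Delta^2_N)$, and composition with the closed immersion $\Delta^2_N \hookrightarrow V^N_{4N,6N}$ gives the claimed equality of ideals. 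I expect the main subtlety to lie in verifying the envelope property cleanly in the equivariant setting with integral coefficients; this is analogous to the envelope argument already carried out for $\Delta^1_N$ in \Cref{envelope}, and checking birationality on the open strata will be the core technical input.
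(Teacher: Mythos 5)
Your proposal is correct and takes essentially the same route as the paper: both define the filtration by the images $\im(p_k)$, observe that $p_k$ restricted over the open stratum is bijective (hence an equivariant Chow envelope of that stratum), and conclude by the equivariant envelope/stratification lemma, which the paper cites as \cite{DLFV}*{Lemma 3.3}. Your write-up merely fills in the divisibility argument for uniqueness of the degree-$k$ form $H$ on the open stratum, which the paper leaves implicit.
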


\begin{proof}
Set $\Delta^2_{N,k}:=\im(p_k)$, so that we have an equivariant stratification of $\Delta^2_N$ given by
\[ \Delta_N^2=\Delta_{N,1}^2\supset\Delta_{N,2}^2\supset\dots\supset\Delta^2_{N,N-1}\supset\Delta^2_{N,N}. \]
Observe that the induced maps 
$Z_k\smallsetminus p_k^{-1}(\Delta^2_{N,{k+1}})\longrightarrow (\Delta^2_{N,k}\smallsetminus\Delta^2_{N,k+1})$ are equivariant Chow envelopes of the strata. We can then apply \cite{DLFV}*{Lemma 3.3} and conclude the proof.
\end{proof}

We have reduced the problem of computing the relations coming from $\Delta^2_N$ to determining the images of several pushforwards. The generators of the Chow groups of $[Z_k/G_N]$ are easier to compute, and so are their pushforwards. Indeed, consider the diagram
\[ \begin{tikzcd}
\left[V^N_{4N,6N}\times\bP V_k/G_N\right] \ar[r, "\pr_1"] \ar[d, "\pr_2"] & \left[V^N_{4N,6N}/G_N\right] \\
\left[\bP V_k/G_N\right].
\end{tikzcd} \]
Then we have the following.
\begin{Lemma}\label{lm:generators}
The image of $p_{k*}$ is generated as an ideal by all the cycles of the form $\pr_{1*}([Z_k]_{G_N}\cdot \pr_2^*\eta)$, where $\eta$ ranges among all the generator of $\ch^*([\bP V_k/G_N])$ as $\ch^*(\cB G_N)$-module.
\end{Lemma}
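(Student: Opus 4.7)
The plan is to exploit the factorization $p_k \colon Z_k \xhookrightarrow{\,i\,} V^N_{4N,6N}\times\bP V_k \xrightarrow{\,\pr_1\,} V^N_{4N,6N}$, together with the fact that, as a $G_N$-equivariant vector bundle over $\bP V_k$, the total space $Z_k$ satisfies $\pi^* \colon \ch^*([\bP V_k/G_N])\xrightarrow{\sim} \ch^*([Z_k/G_N])$ by homotopy invariance, where $\pi = \pr_2|_{Z_k}$. So any class $\alpha$ on $[Z_k/G_N]$ can be written uniquely as $\pi^*\beta$, and since $\pi = \pr_2\circ i$, we have $\pi^*\beta = i^*\pr_2^*\beta$. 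The projection formula for the closed embedding $i$ then gives
\begin{equation*}
p_{k*}(\alpha) \;=\; \pr_{1*}\,i_*\bigl(i^*\pr_2^*\beta\bigr) \;=\; \pr_{1*}\bigl([Z_k]_{G_N}\cdot \pr_2^*\beta\bigr),
\end{equation*}
which already shows that $\im(p_{k*})$ consists exactly of the cycles $\pr_{1*}([Z_k]_{G_N}\cdot \pr_2^*\beta)$ as $\beta$ ranges over the full ring $\ch^*([\bP V_k/G_N])$.

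To cut down to a finite list of generators, I would decompose an arbitrary $\beta$ in terms of the chosen $\ch^*(\cB G_N)$-module generators $\{\eta_j\}$: writing $\beta = \sum_j \pi_B^*(c_j)\cdot\eta_j$ with $\pi_B\colon [\bP V_k/G_N]\to\cB G_N$ the structure map and $c_j\in\ch^*(\cB G_N)$. Using the commutative square of structure maps $\pi_B\circ\pr_2 = \pi'_B\circ\pr_1$, where $\pi'_B\colon [V^N_{4N,6N}/G_N]\to\cB G_N$, the class $\pr_2^*\pi_B^*(c_j)$ agrees with $\pr_1^*\pi_B'^{*}(c_j)$. A second application of the projection formula, now for $\pr_1$, yields
\begin{equation*}
\pr_{1*}\bigl([Z_k]_{G_N}\cdot \pr_2^*\beta\bigr) \;=\; \sum_j \pi_B'^{*}(c_j)\cdot \pr_{1*}\bigl([Z_k]_{G_N}\cdot \pr_2^*\eta_j\bigr),
\end{equation*}
which expresses every element of $\im(p_{k*})$ as a $\ch^*([V^N_{4N,6N}/G_N])$-linear combination of the finitely many distinguished cycles $\pr_{1*}([Z_k]_{G_N}\cdot \pr_2^*\eta_j)$. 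Conversely, each such distinguished cycle is manifestly in $\im(p_{k*})$ (take $\alpha = \pi^*\eta_j$), and $\im(p_{k*})$ is automatically an ideal in $\ch^*([V^N_{4N,6N}/G_N])$ by the projection formula applied to $p_k$ itself.

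I do not anticipate a serious obstacle: the whole argument is a bookkeeping exercise combining the vector-bundle pullback isomorphism with two applications of the projection formula. The only point requiring care is verifying that the $\ch^*(\cB G_N)$-module structures on $\ch^*([\bP V_k/G_N])$ and $\ch^*([V^N_{4N,6N}/G_N])$ are compatible after pulling back via $\pr_1$ and $\pr_2$, which is immediate from the commutativity of the square of structure maps to $\cB G_N$.
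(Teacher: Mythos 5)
Your proposal is correct and follows essentially the same route as the paper: factor $p_k$ through the closed embedding $i$ and the projection $\pr_1$, identify $\ch^*([Z_k/G_N])$ with $\ch^*([\bP V_k/G_N])$ via the vector-bundle pullback, and apply the projection formula to convert $p_{k*}$ into $\pr_{1*}([Z_k]_{G_N}\cdot\pr_2^*(-))$. The paper's proof is just a terser version of the same argument, leaving the final bookkeeping with the $\ch^*(\cB G_N)$-module generators implicit.
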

\begin{proof}
Write $p_k$ as the composition of the closed embedding $i:Z_k\hookrightarrow V^N_{4N,6N}\times\bP V_k$ followed by the projection $\pr_1:V^N_{4N,6N}\times\bP V_k\to V^N_{4N,6N}$. Observe that the Chow ring of $[Z_k/G]$ is generated as a module over $\ch^*(\cB G)$ by the pullback of generators of $\ch^*(\bP V_k)$, i.e. by elements of the form $i^*\pr_2^*\eta$. We deduce that the image of $p_{k*}$ is generated as an ideal by
\begin{align*}
    p_{k*}(i^*\pr_2^*\eta) = \pr_{1*}i_*(i^*\pr_2^*\eta) = \pr_{1*}([Z_k]_G\cdot\pr_2^*\eta),
\end{align*}
as claimed.
\end{proof}

\subsection{Computation of the fundamental class of $Z_k$}.

The subvariety $Z_k\subset V_{4N,6N}^N \times  \bP V_k$ has codimension $10k$ and its equivariant fundamental class is equal to the equivariant top Chern class of the vector bundle 
\begin{equation*}
    \left(\cP^3_k(\cO_{\bP^1}(4N))\otimes\bL^{\otimes 4}\right) \oplus  \left(\cP^5_k(\cO_{\bP^1}(6N))\otimes\bL^{\otimes 6}\right),
\end{equation*} 
which is equal to the product of the top Chern classes of the two factors. We write
\begin{equation*}
    c_{2dk}^{G_N}(\cP^{2d-1}_k(\cO_{\bP^1}(2dN))\otimes\bL^{\otimes 2d})=\sum_{i=0}^{2dk} c_i^{G_N}(\cP^{2d-1}_k(\cO_{\bP^1}(2dN)))(2d\zeta_1)^{2dk-i},
\end{equation*}
where we set $\zeta_1=-\tau_1$ when $N$ is even and $\zeta_1=(N-1)c_1/2$ when $N$ is odd.
In this way, we have reduced our computation of the fundamental class of $Z_k$ to determining the $G_N$-equivariant Chern classes of $\cP^{2d-1}_k(\cO_{\bP^1}(2dN))$. For this we use (\ref{eq:seq PP}), which tells us that
\begin{equation*}
    c_i^{G_N}(\cP^{2d-1}_k(\cO_{\bP^1}(2dN)))=\sum_{j=0}^{i} c_j^{G_N}(V_{2dN})s_{i-j}^{G_N}(V_{2d(N-k)}\otimes\cO_{\bP V_k}(-2d)).
\end{equation*}
Define $\xi_1$ as $c_1^{G_N}(\cO_{\bP V_k}(1))$ for $N$ odd or $N$ even and $k$ even, and as $\frac{1}{2}c_1^{G_N}(\cO_{\bP V_k}(2))\gamma_1/2$ for $N$ even and $k$ odd. Observe that $\cO_{\bP V_k}(2)$ is indeed an equivariant $G_N\simeq\PGL_2\times\bG_m$-line bundle: this follows from the fact that the square map $\bP V_k\hookrightarrow\bP V_{2k}$ is $\PGL_2$-equivariant, and the restriction of the $\PGL_2$-equivariant line bundle $\cO_{\bP V_{2k}}(1)$ (which is equivariant because $\bP V_{2k}$ is the projectivization of the $\PGL_2$-representation $V_{2k}$) along this map coincides with $\cO_{\bP V_k}(2)$. 

Applying the formula for the Segre classes of tensor products, we obtain
\begin{align*}
    s_{i-j}^{{G_N}}(V_{2d(N-k)}\otimes\cO_{\bP V_k}(-2d))&=\sum_{\ell=0}^{i-j}(-1)^{i-j-l}\binom{2d(N-k)+i-j}{2d(N-k)+\ell}s_{\ell}^{{G_N}}(V_{2d(N-k)})(-2d\xi_1)^{i-j-\ell}\\
    &=\sum_{\ell=0}^{i-j}\binom{2d(N-k)+i-j}{2d(N-k)+\ell}s_{\ell}^{{G_N}}(V_{2d(N-k)})(2d\xi_1)^{i-j-\ell}
\end{align*}
Putting everything together, we get the following expression for the ${G_N}$-equivariant fundamental class of $Z_k$:
\begin{align}\label{eq:class Zk}
    \prod_{d=2}^{3}\sum \binom{2d(N-k)+i_d-j_d}{2d(N-k)+\ell_d}(2d)^{2dk-j_d-\ell_d} c_j^{{G_N}}(V_{2dN})s_{\ell_d}^{G_N}(V_{2d(N-k)})\xi_1^{i_d-j_d-\ell_d}\zeta_1^{2dk-i_d}
\end{align}
where the sum index runs over all the triples $(i_d,j_d,\ell_d)$ such that $j_d+\ell_d\leq i_d\leq 2dk$, for $d=2,3$.
\subsection{Relations from $\Delta^2_N$}
We are going to compute generators as an ideal for the image of
\begin{equation}\label{eq:push D2}
    \ch^{*-9}([\Delta^2_N/G_N])\longrightarrow\ch^*([V^N_{4N,6N}/{G_N}]).
\end{equation}
Consider again the diagram
\begin{equation}\label{eq:diag PVk}
\begin{tikzcd}
\left[V^N_{4N,6N}\times\bP V_k/{G_N}\right] \ar[r, "\pr_1"] \ar[d, "\pr_2"] & \left[V^N_{4N,6N}/{G_N}\right] \ar[d] \\
\left[\bP V_k/{G_N}\right] \ar[r, "\pi"] & \cB {G_N}.
\end{tikzcd} 
\end{equation}
From \Cref{lm:generators} we know that the image of (\ref{eq:push D2}) is generated by the cycles $\pr_{1*}([Z_k]_{G_N}\cdot\pr_2^*\eta)$, where $\eta$ ranges among all the generators of $\ch^*(\left[\bP V_k/{G_N}\right])$ as $\ch^*(\cB {G_N})$-module.

Let us rewrite the formula for $[Z_k]_{G_N}$ contained in (\ref{eq:class Zk}) as 
\begin{equation*}
    \sum C_k(i,j,\ell)\xi_1^{i-j-\ell}
\end{equation*}
where $j+\ell\leq i\leq 10k$, and the coefficients are
\begin{align*}
    C_k(i,j,\ell)=\zeta_1^{10k-i} \cdot \left(\sum  \prod_{d=2}^{3} \binom{2d(N-k)+i_d-j_d}{2d(N-k)+\ell_d}(2d)^{2dk-j_d-\ell_d} c_{j_d}^{{G_N}}(V_{2dN})s_{\ell_d}^{{G_N}}(V_{2d(N-k)})\right).
\end{align*}
The sum above is taken over all the triples $(i_d,j_d,\ell_d)$, $d=2,3$, such that
\begin{equation*}
    j_d+\ell_d\leq i_d\leq 2dk,\quad \sum_{d=2}^{3} (i_d,j_d,\ell_d)=(i,j,\ell).
\end{equation*}
Pullbacks along the vertical arrows of the diagram (\ref{eq:diag PVk}) induce isomorphism of Chow rings. Thus, after identifying the Chow rings on the top of the diagram with the respective ones on the bottom, we have
\begin{equation}\label{eq:push not exp}
     \pr_{1*}([Z_k]_{G_N}\cdot \pr_2^*\eta) =  \sum C_k(i,j,\ell) \pi_*(\xi_1^{i-j-\ell}\cdot\eta)
\end{equation}
Note that, in the equality above, we are allowed to apply the projection formula because the coefficients $C_k(i,j,\ell)$ are cycles pulled back from $\cB {G_N}$.

For ${G_N}\simeq\GL_2$ we know from \Cref{prop:chow hilb even} that the Chow ring of $[\bP V_k/\GL_2]$ is generated by powers of the hyperplane class $h$ and we have $\zeta_1=(N-1)c_1/2$ and $\xi_1=h$. Therefore, applying \Cref{prop:chow hilb even} we get the following explicit expression for (\ref{eq:push not exp}) when $\eta=h^m$:
\begin{align}\label{eq:relations D2 for GL2}
    f_{k,m}:=&\sum \left((N-1)c_1/2\right)^{10k-i}s^{\GL_2}_{i-j-\ell-(k-m)}(V_k) \nonumber \\
    &\cdot \left(\sum  \prod_{d=2}^{3} \binom{2d(N-k)+i_d-j_d}{2d(N-k)+\ell_d}(2d)^{2dk-j_d-\ell_d} c_{j_d}^{\GL_2}(V_{2dN})s_{\ell_d}^{\GL_2}(V_{2d(N-k)})\right),
\end{align}
where $(i,j,\ell):=(i_2,j_2,\ell_2)+(i_3,j_3,\ell_3)$ and the sum is taken over all the pairs of triples of positive numbers $\{(i_d,j_d,\ell_d)\}_{d=2,3}$ such that $j_d+\ell_d\leq i_d \leq 2dk$.

For ${G_N}\simeq\PGL_2\times\bG_m$ and $k$ even, we have a similar picture: the only difference is that $\zeta_1= -\tau_1$, hence an explicit expression for (\ref{eq:push not exp}) when $\eta=h^{m'}$ is given by
\begin{align}\label{eq:relations D2 for PGL2 k even}
    g_{k,m'}:=&\sum (-\tau_1)^{10k-i}s^{\PGL_2}_{i-j-\ell-(k-m')}(V_k) \nonumber\\ &\cdot \left(\sum  \prod_{d=2}^{3} \binom{2d(N-k)+i_d-j_d}{2d(N-k)+\ell_d}(2d)^{2dk-j_d-\ell_d} c_{j_d}^{\PGL_2}(V_{2dN})s_{\ell_d}^{\PGL_2}(V_{2d(N-k)})\right).
\end{align}
where again $(i,j,\ell):=(i_2,j_2,\ell_2)+(i_3,j_3,\ell_3)$ and the sum is taken over all the pairs of triples of positive numbers $\{(i_d,j_d,\ell_d)\}_{d=2,3}$ such that $j_d+\ell_d\leq i_d \leq 2dk$.

Finally, for $G_N\simeq\PGL_2\times\bG_m$ and $k$ odd, we know from \Cref{prop:chow hilb odd} that the Chow ring of the stack $[\bP V_k/\PGL_2\times\bG_m]$ is generated as a module over $\ch^*(\cB (\PGL_2\times\bG_m))$ by monomials of the form $\gamma_1^m\gamma_2^n$, where $m\in\{0,1\}$ and $n\leq \frac{k-1}{2}$. Moreover, for $k$ odd we have $\xi_1=\frac{\gamma_1}{2}$, hence
\begin{equation*}
    \pr_{1*}([Z_k]_{G_N}\cdot \gamma_1^m\gamma_2^n) =  \sum 2^{-(i-j-\ell)}C_k(i,j,\ell) \pi_*(\gamma_1^{m+i-j-\ell}\gamma_2^n)
\end{equation*}
Write $m'=2n+m$, where $m$ is either $0$ or $1$. Applying \Cref{lm:push odd}, we get the following explicit expression for the pushforwards:
\begin{align}\label{eq:relations D2 for PGL2 k odd}
    g_{k,m'}:=&\sum k^{-1} 2^{-(i-j-\ell)}(-\tau_1^{10k-i})\\
    &\cdot\left(\sum  \prod_{d=2}^{3} \binom{2d(N-k)+i_d-j_d}{2d(N-k)+\ell_d}(2d)^{2dk-j_d-\ell_d} c_{j_d}^{\PGL_2}(V_{2dN})s_{\ell_d}^{\PGL_2}(V_{2d(N-k)})\right) \nonumber \\
    &\cdot\left(\sum_{q\leq n+\frac{m+i-j-\ell-k}{2}} E_{n,m+i-j-\ell}(q) \cdot s^{\PGL_2}_{2(n-q)+m+i-j-\ell-k}(V_{k-1})2c_2^q\right) \nonumber
\end{align}
where, as before, we set $(i,j,\ell):=(i_2,j_2,\ell_2)+(i_3,j_3,\ell_3)$ and the sum is taken over all the pairs of triples of positive numbers $\{(i_d,j_d,\ell_d)\}_{d=2,3}$ such that $j_d+\ell_d\leq i_d \leq 2dk$. The quantity $E_{n,m}(q)$ is the one defined just before \Cref{lm:push odd}.

Putting all together, we deduce the following.
\begin{Prop}\label{prop:relations D2}
The image of the pushforward $\ch^{*-9}([\Delta_N^2/{G_N}])\to\ch^*([V^N_{4N,6N}/{G_N}])$ is generated by:
\begin{enumerate}
    \item when $N$ is odd, by the cycles $f_{k,m}$ described in (\ref{eq:relations D2 for GL2}) for $1\leq k\leq N$ and $0\leq m\leq k$;
    \item when $N$ is even, by the cycles $g_{k,m'}$ described in (\ref{eq:relations D2 for PGL2 k even}) and (\ref{eq:relations D2 for PGL2 k odd}) for $1\leq k\leq N$ and $0\leq m'\leq k$.
\end{enumerate}

\end{Prop}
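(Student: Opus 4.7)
The strategy is simply to assemble the preparatory results from Subsections \ref{sec:equi} and from Section \ref{equivint}. By the Lemma on the equivariant stratification, the image of $\ch^{*-9}([\Delta^2_N/G_N])\to\ch^*([V^N_{4N,6N}/G_N])$ equals the sum, over $k=1,\dots,N$, of the images of the pushforwards $p_{k*}$. By \Cref{lm:generators}, each such image is in turn generated as an ideal by the cycles $\pr_{1*}([Z_k]_{G_N}\cdot\pr_2^*\eta)$, where $\eta$ runs over any chosen set of generators of $\ch^*([\bP V_k/G_N])$ as a $\ch^*(\cB G_N)$-module. So the plan is to choose these generators according to the two cases of Section \ref{equivint} and then apply the projection formula together with the explicit pushforward descriptions already proved.

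First I would fix the expression (\ref{eq:class Zk}) for $[Z_k]_{G_N}$ and rewrite it in the form $\sum C_k(i,j,\ell)\,\xi_1^{i-j-\ell}$, where the coefficients $C_k(i,j,\ell)$ are pulled back from $\cB G_N$ (this is precisely how they are defined, and it is the crucial point that allows the use of the projection formula in (\ref{eq:push not exp})). Identifying the Chow rings on top of the diagram (\ref{eq:diag PVk}) with those on the bottom via pullback isomorphisms along the vector bundle projections, the projection formula gives
\[ \pr_{1*}([Z_k]_{G_N}\cdot\pr_2^*\eta)=\sum C_k(i,j,\ell)\,\pi_*(\xi_1^{i-j-\ell}\cdot\eta). \]

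Next I would split into the three cases according to the parity of $N$ and $k$. For $N$ odd, Proposition \ref{prop:chow hilb even} tells us that $\ch^*([\bP V_k/\GL_2])$ is generated over $\ch^*(\cB\GL_2)$ by $h^m$ for $0\leq m\leq k$; setting $\zeta_1=(N-1)c_1/2$ and $\xi_1=h$ and using the formula $\pi_*(h^{m+i-j-\ell})=s^{\GL_2}_{m+i-j-\ell-k}(V_k)$ from the same proposition, one immediately recovers the explicit expression for $f_{k,m}$ in (\ref{eq:relations D2 for GL2}). For $N$ even and $k$ even, the argument is identical, with $\zeta_1=-\tau_1$ and the $\PGL_2$-equivariant Segre classes in place of the $\GL_2$-ones, yielding $g_{k,m'}$ as in (\ref{eq:relations D2 for PGL2 k even}).

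The only case that requires slightly more care is $N$ even and $k$ odd, since then $\ch^*([\bP V_k/\PGL_2])$ is not a projective bundle; by \Cref{prop:chow hilb odd} we must take as generators the monomials $\gamma_1^m\gamma_2^n$ with $m\in\{0,1\}$ and $0\leq n\leq(k-1)/2$, and we have $\xi_1=\gamma_1/2$. After substituting and writing $m'=2n+m$, the pushforward $\pi_*(\gamma_1^{m+i-j-\ell}\gamma_2^n)$ is computed by \Cref{lm:push odd}, producing exactly the expression (\ref{eq:relations D2 for PGL2 k odd}). I expect that the only mildly delicate point in the whole proof is checking that, in the odd-$k$ case, all the factors of $1/2$ and $1/k$ coming from $\xi_1=\gamma_1/2$ and from \Cref{lm:push odd} respectively cancel against actual integer coefficients — but this is guaranteed by the fact that the expressions a priori live in the integral Chow ring, as was already observed right after \Cref{lm:push odd}. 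Putting the three cases together yields the statement.
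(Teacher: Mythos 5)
Your proposal is correct and follows essentially the same route as the paper: reduce via the stratification lemma to the images of the $p_{k*}$, apply \Cref{lm:generators}, expand $[Z_k]_{G_N}$ in powers of $\xi_1$ with coefficients pulled back from $\cB G_N$ so that the projection formula applies, and then split into the three parity cases using \Cref{prop:chow hilb even}, \Cref{prop:chow hilb odd} and \Cref{lm:push odd}. Your remark that the apparent denominators $1/2$ and $1/k$ in the odd-$k$ case are harmless because the expressions a priori live in the integral Chow ring matches the observation made in the paper immediately after \Cref{lm:push odd}.
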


\subsection{Proof of the main result}
We have all the ingredients necessary to prove our main result. Indeed, we know from \Cref{thm:presentation WN} that the stack $\cW^{\min}_N$ is isomorphic to $[V_{4N,6N}^N\smallsetminus (\Delta_N^1\cup \Delta_N^2)/{G_N}]$, hence we have a localization exact sequence
\[ \ch([(\Delta_N^1\cup\Delta_N^2)/{G_N}]) \longrightarrow \ch([V^N_{4N,6N}/{G_N}]) \longrightarrow \ch(\cW^{\min}_N) \longrightarrow 0. \]
The image of the map on the left is equal to the sum of the images of the maps $\ch_*([\Delta_N^i/{G_N}]) \to \ch_*([V^N_{4N,6N}/{G_N}])$ for $i=1,2$, which have been computed in \Cref{envelope} and \Cref{prop:relations D2}.

The integral Chow ring of $[V^N_{4N,6N}/{G_N}]$ is isomorphic to the one of $\cB {G_N}$, where the isomorphism is induced by the pullback morphism along the map $[V^N_{4N,6N}/{G_N}]\to\cB {G_N}$. When $N$ is odd, we have ${G_N}\simeq\GL_2$ and $\ch^*(\cB\GL_2)\simeq \bZ[c_1,c_2]$, with $c_1$ and $c_2$ the Chern classes of the universal rank two vector bundle.

Therefore, the generators $c_1$ and $c_2$ of $\ch^*(\cW_N^{\min})$ are by construction the Chern classes of the pullback of the universal rank two vector bundle on $\cB\GL_2$. The map $\cW_N^{\min}\to\cB\GL_2$ is induced by the rank two vector bundle $\cE_N$ of \Cref{def:vec N even} (see \Cref{prop:class map odd}), hence the pullback of the universal vector bundle is equal to $\cE_N$.

Similarly, for $N$ even we have ${G_N}\simeq\PGL_2\times\bG_m$ and the integral Chow ring of the associated classifying stack is isomorphic to $\bZ[\tau_1,c_2,c_3]/(2c_3)$.

The generator $\tau_1$ is the first Chern class of the pullback of the universal line bundle on $\cB\bG_m$, which by \Cref{prop:class map even} is equal to $\cL_N$. The other two generators $c_2$ and $c_3$ are by definition the pullback of the generators of $\ch^*(\cB\PGL_2)$, which are the Chern classes of the rank three vector bundle $(\cP\overset{p}{\to} B)\longmapsto p_*(\omega_{\cP/B}^{\vee})$. The pullback of the latter is by definition the rank three vector bundle $\cE_N$ of \Cref{def:vec N even}.

Putting all together, we obtain our first main result.
\begin{theorem}\label{thm:main}
Suppose that the ground field has characteristic $\neq 2,3$. Then
\begin{enumerate}
    \item for $N$ odd we have
    \[ \ch^*(\cW^{\min}_N)\simeq \bZ[c_1,c_2]/I_N \]
    where the ideal of relations $I_N$ is generated by the polynomials $f_{k,m}$ described in (\ref{eq:relations D2 for GL2}) for $1\leq k\leq N$ and $0\leq m\leq k$, together with the fundamental class $[\Delta_N^1]_{\GL_2}$. The degree of $f_{k,m}$ is $9k+m$ and the degree of $[\Delta_N^1]_{\GL_2}$ is $8N+1$. The generators $c_1$ and $c_2$ are the Chern classes of the rank two vector bundle $\cE_N$ introduced in \Cref{def:vec N odd}.
    
    \item for $N$ even, we have
    \[ \ch^*(\cW^{\min}_N)\simeq \bZ[\tau_1,c_2,c_3]/(2c_3,I_N) \]
    where the ideal of relations $I_N$ is generated by the polynomials $g_{k,m'}$ described in (\ref{eq:relations D2 for PGL2 k even}) and (\ref{eq:relations D2 for PGL2 k odd}) for $1\leq k\leq N$ and $0\leq m'\leq k$, together with the fundamental class $[\Delta_N^1]_{\PGL_2\times\bG_m}$. The degree of $g_{k,m'}$ is $9k+m'$ and the degree of $[\Delta_N^1]_{\PGL_2\times\bG_m}$ is $8N+1$. The generator $\tau_1$ is the first Chern class of the line bundle $\cL_N$ introduced in \Cref{def:vec N even}, and the generators $c_2$ and $c_3$ are the Chern classes of the rank three vector bundle $\cE_N$ introduced in \Cref{def:vec N even}.
    
\end{enumerate}
\end{theorem}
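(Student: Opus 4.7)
The plan is to assemble the theorem from the ingredients already developed in Sections \ref{moduli}--\ref{Delta2}. By \Cref{thm:presentation WN full} we have an isomorphism $\cW^{\min}_N\simeq [(V^N_{4N,6N}\smallsetminus\Delta_N)/G_N]$ where $G_N\simeq \GL_2$ if $N$ is odd and $G_N\simeq \PGL_2\times\bG_m$ if $N$ is even. The stack $[V^N_{4N,6N}/G_N]$ is a $G_N$-equivariant affine bundle over $\cB G_N$, so the pullback along the zero section induces an isomorphism
\[ \ch^*([V^N_{4N,6N}/G_N])\simeq \ch^*(\cB G_N), \]
which is $\bZ[c_1,c_2]$ in the odd case (after Totaro--Pandharipande) and $\bZ[\tau_1,c_2,c_3]/(2c_3)$ in the even case (by \cite{Phan} together with $\ch^*(\cB\bG_m)=\bZ[\tau_1]$).

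Next, I would invoke the localization exact sequence associated to the closed embedding $\Delta_N=\Delta_N^1\cup\Delta_N^2\hookrightarrow V^N_{4N,6N}$:
\[ \ch_*^{G_N}(\Delta_N) \longrightarrow \ch_*^{G_N}(V^N_{4N,6N}) \longrightarrow \ch^*(\cW^{\min}_N) \longrightarrow 0. \]
By additivity of Chow groups and the fact that $\Delta_N$ is covered by $\Delta_N^1$ and $\Delta_N^2$, the image of the left map is the sum of the ideals generated by the images of the pushforwards from $\Delta_N^1$ and $\Delta_N^2$. By \Cref{envelope} together with \Cref{prop:class D1 GL2} (respectively \Cref{prop:class D1 PGL}), the image from $\Delta_N^1$ is a principal ideal generated by $[\Delta_N^1]_{G_N}$, which has degree $8N+1$ (the codimension of $\Delta_N^1$ inside $V^N_{4N,6N}$). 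By \Cref{prop:relations D2}, the image from $\Delta_N^2$ is generated by the classes $f_{k,m}$ (odd case) or $g_{k,m'}$ (even case) for $1\leq k\leq N$ and $0\leq m,m'\leq k$; a direct degree count using \eqref{eq:relations D2 for GL2}--\eqref{eq:relations D2 for PGL2 k odd} shows these have degree $9k+m$ (resp.\ $9k+m'$), since $[Z_k]_{G_N}$ has codimension $10k$ and the pushforward along $\pr_1$ lowers degree by the dimension $k$ of $\bP V_k$ (with appropriate correction by $m$ from the generator chosen).

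It remains to identify the abstract generators $c_1,c_2$ (respectively $\tau_1, c_2, c_3$) of $\ch^*(\cB G_N)$ with the Chern classes of the claimed tautological bundles on $\cW^{\min}_N$. For $N$ odd this is \Cref{prop:class map odd}, which says that the map $\cW_N^{\min}\to\cB\GL_2$ corresponding to the presentation is classified by $\cE_N$, so that the pullback of the universal rank-two bundle is $\cE_N$ and hence $c_i=c_i(\cE_N)$. For $N$ even, \Cref{prop:class map even} gives the corresponding statement: the map $\cW_N^{\min}\to\cB\PGL_2\times\cB\bG_m$ sends a family to $(\cP\to T,\cL_N)$, so $\tau_1=c_1(\cL_N)$ and $c_2,c_3$ are the Chern classes of $\cE_N=p_*(\omega_{\cP/T}^\vee)$.

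The main obstacle in this plan is purely notational/combinatorial: one must unwind the definitions of $f_{k,m}$ and $g_{k,m'}$ in \eqref{eq:relations D2 for GL2}--\eqref{eq:relations D2 for PGL2 k odd} and verify the degrees. There is no conceptual difficulty left---all the work has been done in the preceding sections---but care is needed to check that no relations are lost when passing from cycle classes in $\ch_*^{G_N}(V^N_{4N,6N})$ to ring classes in $\ch^*([V^N_{4N,6N}/G_N])\simeq \ch^*(\cB G_N)$, which is routine since the excision sequence is in fact a sequence of ideals by \Cref{lm:generators} and the projection formula.
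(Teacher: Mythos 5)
Your proposal is correct and follows essentially the same route as the paper's own proof: the presentation of \Cref{thm:presentation WN full}, the localization exact sequence with the image of the excised locus computed via \Cref{envelope} and \Cref{prop:relations D2}, and the identification of the generators through \Cref{prop:class map odd} and \Cref{prop:class map even}. No substantive differences to report.
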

Note the relations appearing in the Theorem above can be made fully explicit: one can apply \Cref{prop:class D1 GL2} and \Cref{prop:class D1 PGL} for computing the fundamental class of $\Delta_N^1$, and \Cref{prop:chern V2m 1} and \Cref{prop:chern V2m 2} to obtain explicit expressions for the Chern and Segre classes of the representations appearing in $f_{k,m}$ and $g_{k,m'}$. Plugging these formulas into the relations, one get the desired description. This is exactly what we will do in the next Section for $N=1,2$.
\section{Integral Chow rings of stacks of rational elliptic surfaces and elliptic K3 surfaces}\label{N12}
In this Section we compute the integral Chow ring of $\cW_1^{\min}$, the moduli stack of rational elliptic surfaces, and of $\cW_2^{\min}$, the moduli stack of elliptic K3 surfaces. The two main results are \Cref{thm:chow W1} and \Cref{thm:chow W2}.
\subsection{The case $N=1$}
A Weierstrass fibration $X\to \bP^1$ with fundamental invariant $N=1$ is a rational surface, obtained by blowing up $\bP^2$ along the base locus of a pencil of cubics. Equivalently, we can think of $X$ as the blow-up of a Del Pezzo surface of degree $1$ along the anticanonical divisor.

The stack $\cW^{\min}_1$ is not Deligne-Mumford because of the presence of objects with infinite dimensional automorphism group \cite[Remark 4.5]{PS}.

\begin{theorem}\label{thm:chow W1}
Suppose that the ground field $k$ has characteristic $\neq 2,3$ and set $r_6=576(30c_1^6+151c_1^4c_2+196c_1^2c_2^2+64c_2^3)$. Then we have
\[ \ch^*(\cW^{\min}_1)\simeq\bZ[c_1,c_2]/(6c_1c_2r_6,c_1^3r_6,c_1^2c_2r_6), \]
where $c_1$ and $c_2$ are the Chern classes of the rank two vector bundle $\cE_1$ introduced in \Cref{def:vec N odd}.
\end{theorem}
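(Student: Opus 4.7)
The strategy is to specialize \Cref{thm:main} to the case $N=1$. Since $1$ is odd, the theorem gives a presentation $\ch^*(\cW^{\min}_1)\simeq \bZ[c_1,c_2]/I_1$, where $I_1$ is generated by three elements: the fundamental class $[\Delta_1^1]_{\GL_2}$ of degree $8N+1=9$, and the polynomials $f_{1,0}$ and $f_{1,1}$ of degrees $9$ and $10$ respectively. The proof reduces to making each of these three generators completely explicit as polynomials in $c_1,c_2$, and recognizing the factor $r_6$ in each.

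For the fundamental class of $\Delta_1^1$, I would apply \Cref{prop:class D1 GL2}, which yields
\[
[\Delta_1^1]_{\GL_2} \;=\; \frac{c^{\GL_2}_{12}(V^1_4\oplus V^1_6)}{c^{\GL_2}_3(V^1_2)}.
\]
For $N=1$ we have $V^1_k = \Sym^k E^\vee$. Writing the Chern roots of $\Sym^k E^\vee$ as $a\ell_1+(k-a)\ell_2$ for $0\le a\le k$ (with $\ell_1,\ell_2$ the Chern roots of $E^\vee$) and pairing conjugate roots, one finds $c_3(\Sym^2 E^\vee)=-4c_1c_2$, while the numerator factors as
\[
c_5(\Sym^4 E^\vee)\,c_7(\Sym^6 E^\vee) \;=\; 13824\cdot c_1^2c_2^2\,(3c_1^2+4c_2)(5c_1^2+16c_2)(2c_1^2+c_2).
\]
Dividing by $-4c_1c_2$ and expanding the product of the three quadratics in $c_1^2$ produces $-6c_1c_2\cdot 576(30c_1^6+151c_1^4c_2+196c_1^2c_2^2+64c_2^3) = -6c_1c_2r_6$, matching the first stated relation up to a sign.

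For $f_{1,0}$ and $f_{1,1}$, I would apply formula (\ref{eq:relations D2 for GL2}) with $k=1$. Because $\zeta_1 = (N-1)c_1/2 = 0$ for $N=1$, only the terms with $i=10$ survive; the constraints $i_2+i_3 = 10$ with $i_d\le 2d$ force $(i_2,i_3)=(4,6)$. Moreover $V_{2d(N-k)} = V_0 = \cO$ forces every surviving Segre class $s_{\ell_d}(V_0)$ to have $\ell_d=0$, collapsing the inner double sum to a single product involving only $c^{\GL_2}_{j_d}(V_{2d})$ and the Segre classes $s^{\GL_2}_{9+m-j}(V_1)$ of $V_1 = E^\vee$. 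Using \Cref{prop:chern V2m 1} to express $c^{\GL_2}_*(V_4)$ and $c^{\GL_2}_*(V_6)$ as polynomials in $c_1,c_2$ and inverting $c(E^\vee)=1-c_1t+c_2t^2$ to get the Segre classes of $V_1$, one obtains closed expressions for $f_{1,0}$ and $f_{1,1}$; the main computational claim is that these equal $c_1^3r_6$ and $c_1^2c_2r_6$ up to a unit.

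The main obstacle is the bookkeeping required to verify these last two identities: one must expand the sums over all $(j_2,j_3)$ with $0\le j_d\le i_d$ and $j = j_2+j_3$ ranging from $0$ to $9$ (resp. $10$), and check divisibility by the sextic $30c_1^6+151c_1^4c_2+196c_1^2c_2^2+64c_2^3$. This is elementary but lengthy, and is naturally delegated to a computer algebra system. Once all three generators have been put into the form displayed in the theorem, the presentation follows immediately from \Cref{thm:main}.
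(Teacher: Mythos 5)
Your proposal is correct and follows essentially the same route as the paper: specialize \Cref{thm:main} to $N=1$, evaluate $[\Delta_1^1]_{\GL_2}$ via \Cref{prop:class D1 GL2} and $f_{1,0},f_{1,1}$ via (\ref{eq:relations D2 for GL2}) using the Chern/Segre class formulas of \Cref{prop:chern V2m 1}, and delegate the final expansion to a computer algebra system (the paper uses Mathematica, obtaining $[\Delta_1^1]_{\GL_2}=-6c_1c_2r_6$, $f_{1,0}=-c_1^3r_6$, $f_{1,1}=-c_1^2c_2r_6$ up to the stated unit). Your explicit factorization of the numerator and the observation that $\zeta_1=0$ collapses the sum to $(i_2,i_3)=(4,6)$ with $\ell_d=0$ are both accurate and consistent with the paper's computation.
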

\begin{proof}
This is a straightforward application of \Cref{thm:main}. To compute explicitly the Chern classes of the representations involved, one can use \Cref{prop:chern V2m 1}. The Segre classes are then obtained by formally inverting the total Chern classes. Then one can plug in these expressions into the formulas given in (\ref{eq:relations D2 for GL2}) and into the formula given in \Cref{prop:class D1 GL2}.
After performing these computations with Mathematica, we obtain:
\begin{align*}
    [\Delta_1^1]_{\GL_2}&=-3456c_1c_2(30c_1^6+151c_1^4c_2+196c_1^2c_2^2+64c_2^3);\\
    f_{1,0}&=-576c_1^3(30c_1^6+151c_1^4c_2+196c_1^2c_2^2+64c_2^3);\\
    f_{1,1}&=-576c_1^2c_2(30c_1^6+151c_1^4c_2+196c_1^2c_2^2+64c_2^3).
\end{align*}
This concludes the proof.
\end{proof}

\subsection{The case $N=2$}
The stack $\cW_2^{\min}$ can be regarded as the stack of lattice-polarized elliptic K3 surfaces, as explained in the Introduction of \cite{CK}. The coarse space of this moduli stack is particularly interesting and it has been the subject of much work (see for instance  \cite{ MOP, PY}). Here we determine its integral Chow ring.

\begin{theorem}\label{thm:chow W2}
Suppose that the ground field has characteristic $\neq 2,3$. Then we have
\[ \ch^*(\cW^{\min}_2)\simeq\bZ[\tau_1,c_2,c_3]/(2c_3,r_9,r_{10},r_{18},r_{19}) \]
where
\begin{align*}
    r_9=&1152 (691 c_2^4 \tau_1 - 38005 c_2^3 \tau_1^3 + 309568 c_2^2 \tau_1^5 - 
   497520 c_2 \tau_1^7 + 124416 \tau_1^9), \\
   r_{10}=&1152 (30 c_2^5 - 6811 c_2^4 \tau_1^2 + 133495 c_2^3 \tau_1^4 - 
   481528 c_2^2 \tau_1^6 + 327600 c_2 \tau_1^8 - 20736 \tau_1^{10}), \\
   r_{18}=&1152 c_2^5 (108314154642930 c_2^4 + 1045672 c_2^3 \tau_1^2 - 
   89483 c_2^2 \tau_1^4 + 35 c_2 \tau_1^6 - 4 \tau_1^8),\\
   r_{19}=&2304 c_2^6 \tau_1 (118203201 c_2^3 + 180502 c_2^2 \tau_1^2 - 7 c_2 \tau_1^4 + 4 \tau_1^6).
\end{align*}
The generator $\tau_1$ is the first Chern class of the line bundle $\cL_2$ (see \Cref{def:vec N even}), the other generators $c_2$ and $c_3$ are Chern classes of the rank three vector bundle $\cE_2$ (see \Cref{def:vec N even}), whose first Chern class vanishes.
\end{theorem}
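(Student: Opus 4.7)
The plan is to apply Theorem \ref{thm:main} specialized to $N=2$. Since $N$ is even, Theorem \ref{thm:main}(2) gives $\ch^*(\cW_2^{\min}) \simeq \bZ[\tau_1,c_2,c_3]/(2c_3,I_2)$, where $I_2$ has $\binom{4}{2}=6$ generators: the fundamental class $[\Delta_2^1]_{\PGL_2\times\bG_m}$ in degree $8\cdot 2+1=17$, and the classes $g_{k,m'}$ for $1\leq k\leq 2$, $0\leq m'\leq k$, whose degrees are $9$, $10$, $18$, $19$, and $20$. Since the theorem to be proved lists only four relations (in degrees $9$, $10$, $18$, $19$), the task has two parts: compute these six candidate generators explicitly, then show that the degree $17$ relation $[\Delta_2^1]_{\PGL_2\times\bG_m}$ and the degree $20$ relation $g_{2,2}$ lie in the ideal generated by $r_9, r_{10}, r_{18}, r_{19}$.

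For the first part, I would use Proposition \ref{prop:chern V2m 2} to obtain the $\PGL_2$-equivariant Chern classes of the $\PGL_2\times\bG_m$-representations $V^2_{4},V^2_{6},V^2_{8},V^2_{12}$ appearing in formulas \eqref{eq:relations D2 for PGL2 k even} and \eqref{eq:relations D2 for PGL2 k odd}. The corresponding equivariant Segre classes are obtained by formally inverting the total Chern classes. Note that for $N=2$ we must handle both parities of $k$: for $k=2$ (even) we use \eqref{eq:relations D2 for PGL2 k even} and projective bundle calculations, while for $k=1$ (odd) we use \eqref{eq:relations D2 for PGL2 k odd} involving the classes $\gamma_1,\gamma_2$ of Proposition \ref{prop:chow hilb odd} and the pushforward formula in Lemma \ref{lm:push odd}. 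The class $[\Delta_2^1]_{\PGL_2\times\bG_m}$ is computed via Proposition \ref{prop:class D1 PGL}, which requires first computing the relevant top Chern classes of $\overline{W}^2_{4},\overline{W}^2_{8},\overline{W}^2_{12}$ on $\bP^5$ using \eqref{eq:class W}, then dividing, then restricting to $h=c_1=0$.

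For the second part, once the six polynomials are in hand as explicit elements of $\bZ[\tau_1,c_2,c_3]/(2c_3)$, I would verify in Mathematica (exactly as was done in the proof of Theorem \ref{thm:chow W1}) that $[\Delta_2^1]_{\PGL_2\times\bG_m}\in (r_9,r_{10})$ and $g_{2,2}\in (r_9,r_{10},r_{18},r_{19})$ by exhibiting explicit polynomial multipliers (using that $r_9$ and $r_{10}$ sit in low degree, cofactors in degree $8$ and $7$ respectively suffice for the degree $17$ relation). Since the ambient ring is a quotient of a polynomial ring modulo $2c_3$, the reductions are straightforward to certify: one performs division and tracks the $2c_3$ term.

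The main obstacle is purely computational: formulas \eqref{eq:relations D2 for PGL2 k even} and \eqref{eq:relations D2 for PGL2 k odd} are large multi-index sums, and formula \eqref{eq:class W} for the $\GL_3\times\bG_m$-equivariant top Chern classes of $\overline{W}^2_{2dm}$ involves formally expanding rational functions of several variables. The calculation must be delegated to a computer algebra system, and the resulting polynomials are large (e.g.\ the coefficient $108314154642930$ in $r_{18}$ signals that the expressions arising before simplification are substantial). The redundancy check for the degree $17$ and degree $20$ relations, although conceptually easy, is the only step where something could go wrong arithmetically, so it serves as a consistency check on the rest of the computation.
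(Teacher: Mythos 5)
Your proposal is correct and follows the same overall strategy as the paper: specialize \Cref{thm:main} to $N=2$, compute the six candidate generators (degrees $9,10,17,18,19,20$) with a computer algebra system, and then show the degree $17$ and degree $20$ classes are redundant. Two small points of comparison. First, for the degree $20$ class the paper does not need a separate ideal-membership check: the explicit formula gives $g_{2,2}=-c_2\cdot g_{2,0}$ on the nose, so redundancy is immediate; your Mathematica verification would of course also succeed. Second, and more substantively, for $[\Delta_2^1]_{\PGL_2\times\bG_m}$ you propose to apply \Cref{prop:class D1 PGL} directly, i.e.\ compute the top Chern classes of $\overline{W}^2_4$ and $\overline{W}^2_{8,12}$ on $\bP^5$ via (\ref{eq:class W}), divide, and restrict to $h=c_1=0$. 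This is legitimate --- the division is well defined integrally because $c_5^{\GL_3\times\bG_m}(\overline{W}^2_4)$ is not a zero divisor (\Cref{rmk:non zero div}), so the quotient can be found by solving a linear system in the free module $\ch^*_{\GL_3\times\bG_m}(\bP^5)$ over $\bZ[\tau_1,c_1,c_2,c_3]$ --- but be aware that you cannot shortcut this by reducing mod $2$ first, since mod $2$ the divisor acquires annihilators (multiples of $h$) and the quotient is no longer unique. The paper instead splits the computation (\Cref{lemma:Delta12}): it pulls back along $\cB(\SL_2\times\bG_m)\to\cB(\PGL_2\times\bG_m)$ to get the answer modulo $c_3$ cheaply, and then handles the ambiguous mod $2$ part by a separate divisibility-by-$h$ argument showing the $c_3$-correction vanishes. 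Your direct route buys uniformity at the cost of a heavier integral linear-algebra computation; the paper's route buys smaller intermediate expressions at the cost of the extra argument in \Cref{lemma:Delta12}. Either way the final answer agrees, and the rest of your plan (plugging \Cref{prop:chern V2m 2} into (\ref{eq:relations D2 for PGL2 k even}) and (\ref{eq:relations D2 for PGL2 k odd}), using \Cref{lm:push odd} for $k=1$, and certifying $[\Delta_2^1]\in(r_9,r_{10})$ with explicit degree $8$ and $7$ cofactors) matches the paper's proof.
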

We will prove this Theorem by applying \Cref{thm:main} and by explicitly computing the relations in terms of the generators $\tau_1$, $c_2$ and $c_3$.
\begin{Lemma}\label{lemma:Delta12}
\begin{align*} [\Delta_2^1]_{\PGL_2\times\bG_m}=&-995328 \tau_1 (9 c_2^2 + 160 c_2 \tau_1^2 + 256 \tau_1^4) (100 c_2^6 + 5369 c_2^5 \tau_1^2 \\&+ 74074 c_2^4 \tau_1^4 + 400257 c_2^3 \tau_1^6 + 972972 c_2^2 \tau_1^8 + 1061424 c_2 \tau_1^{10} + 419904 \tau_1^{12})  \end{align*}
\end{Lemma}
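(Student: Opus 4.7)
The plan is to apply Proposition \ref{prop:class D1 PGL} with $N=2$, which reduces the computation of $[\Delta_2^1]_{\PGL_2\times\bG_m}$ to evaluating
\[
\left.\frac{c_{22}^{\GL_3\times\bG_m}(\overline{W}^2_{8,12})}{c_{5}^{\GL_3\times\bG_m}(\overline{W}^2_{4})}\right|_{h=c_1=0}
= \left.\frac{c_{13}^{\GL_3\times\bG_m}(\overline{W}^2_{8})\cdot c_{9}^{\GL_3\times\bG_m}(\overline{W}^2_{12})}{c_{5}^{\GL_3\times\bG_m}(\overline{W}^2_{4})}\right|_{h=c_1=0}.
\]
Here the three Chern classes in the numerator and denominator each have a concrete description via formula (\ref{eq:class W}): for $m=2,4,6$, the bundle $\overline{W}^2_{2m}$ fits in the short exact sequence
\[
0 \to \cO_{\bP^5}(-1)\otimes \Sym^{m-2}E^\vee \otimes L^{\otimes(-m)} \to \Sym^m E^\vee \otimes L^{\otimes(-m)}\otimes \cO_{\bP^5} \to \overline{W}^2_{2m}\to 0,
\]
so its total $\GL_3\times\bG_m$-equivariant Chern class is the quotient of the total Chern classes of the outer terms, expressible as a polynomial in $\ell_1,\ell_2,\ell_3,\tau_1,h$.

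First, I would expand each of $c^{\GL_3\times\bG_m}(\overline{W}^2_{2m})$ using the product formula in (\ref{eq:class W}), being careful to truncate at the appropriate degree. The denominator $c_5(\overline{W}^2_4)$ is a polynomial of modest size; since by Remark \ref{rmk:non zero div} it is not a zero divisor in $\ch_{\GL_3\times\bG_m}^*(\bP^5)$, the division is well-defined and can be carried out by the usual algorithm (multiply the numerator by the ``inverse'' modulo the relation defining $\ch^*(\bP^5)$ as a free $\ch^*(\cB\GL_3\times\bG_m)$-module, or, more practically, determine the unique $\xi$ of the correct degree such that $\xi\cdot c_5(\overline{W}^2_4)=c_{13}(\overline{W}^2_8)c_9(\overline{W}^2_{12})$ by solving a linear system in the coefficients). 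Finally, I would set $h=c_1=0$ and rewrite the remaining symmetric expression in terms of $\tau_1$, $c_2=\ell_1\ell_2+\ell_1\ell_3+\ell_2\ell_3$ (now under the relation $c_1=-(\ell_1+\ell_2+\ell_3)=0$) and $c_3=-\ell_1\ell_2\ell_3$, imposing $2c_3=0$.

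The resulting expression lives in degree $8N+1=17$, and the claimed factorisation as
\[
-995328\,\tau_1\,(9c_2^2+160c_2\tau_1^2+256\tau_1^4)\cdot P_{12}(c_2,\tau_1),
\]
with $P_{12}$ the degree $12$ polynomial written in the statement, can be verified by multiplying out and comparing with the output of the division. The only real obstacle is the sheer size of the symbolic computation: the numerator is a degree $22$ polynomial in five variables and intermediate expressions grow rapidly. Consequently I would carry out the whole calculation in a computer algebra system such as Mathematica or SageMath, exactly as in the proof of Theorem \ref{thm:chow W1}; the conceptual content is entirely contained in Proposition \ref{prop:class D1 PGL} and formula (\ref{eq:class W}), and the lemma amounts to recording the numerical output.
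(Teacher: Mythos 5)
Your proposal is mathematically valid: \Cref{prop:class D1 PGL} does assert that $[\Delta_2^1]_{\PGL_2\times\bG_m}$ equals the stated quotient of top Chern classes evaluated at $h=c_1=0$, the division is well defined by \Cref{rmk:non zero div}, and you correctly perform the division \emph{before} specializing $h$ and $c_1$. However, this is not the route the paper takes. The paper's proof explicitly avoids the direct application of \Cref{prop:class D1 PGL} and instead splits the computation in two: first it pulls back along $\cB(\SL_2\times\bG_m)\to\cB(\PGL_2\times\bG_m)$ (under which $c_2\mapsto 4c_2$ and $c_3\mapsto 0$), where the class is a ratio of top Chern classes of honest $\SL_2\times\bG_m$-representations and is computed from Chern roots with very little effort; this determines $[\Delta_2^1]$ up to a term of the form $c_3\eta$. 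Second, since $2c_3=0$, the ambiguity lives modulo $2$, and the paper shows it vanishes by a divisibility-by-$h$ argument in $\ch^*_{\GL_3\times\bG_m}(\bP^5)\otimes\bZ/2$. What the paper's trick buys is a drastic reduction in the size of the symbolic computation: your approach requires solving for the unique $\xi$ with $\xi\cdot c_5^{\GL_3\times\bG_m}(\overline{W}^2_4)=c_{22}^{\GL_3\times\bG_m}(\overline{W}^2_{8,12})$ in a rank-six free module over $\bZ[\tau_1,c_1,c_2,c_3]$ with a degree-$22$ numerator, whereas the paper only carries out the analogous division modulo $2$, where it reduces to a divisibility statement. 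Your approach is a legitimate (if heavier) alternative, and is essentially a full-strength version of the paper's second step. One slip to correct: the top Chern classes are transposed in your first display. Since $\overline{W}^2_8$ has rank $9$ and $\overline{W}^2_{12}$ has rank $13$, the numerator should read $c_{9}^{\GL_3\times\bG_m}(\overline{W}^2_{8})\cdot c_{13}^{\GL_3\times\bG_m}(\overline{W}^2_{12})$; as written, $c_{13}$ of a rank-nine bundle is zero.
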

\begin{proof}
Instead of applying directly the formula of \Cref{prop:class D1 PGL}, we first compute $[\Delta_2^1]_G$ modulo $c_3$, and then we conclude the computation modulo $2$. This trick is inspired by \cite{FV}.

The homomorphism of algebraic groups $\SL_2\times\bG_m\to\PGL_2\times\bG_m$ given on the first factor by quotienting by $\mu_2$ and the second factor by the identity induces a morphism of stacks \[\cB(\SL_2\times\bG_m)\to\cB(\PGL_2\times\bG_m).\] By taking the pullback along this map we get a homomorphism of rings 
\[\ch^*(\cB(\PGL_2\times\bG_m))\simeq\bZ[c_2,c_3,\tau_1]/(2c_3)\to\ch^*(\cB(\SL_2\times\bG_m))\simeq\bZ[c_2,\tau_1],\]
that sends $\tau_1$ to $\tau_1$, the class $c_2$ to $4c_2$ and $c_3$ is sent to zero (see \cite[Proof of Lemma 5.1]{FV}). The pullback of $[\Delta_2^1]_{G_2}$ along this map is equal to $[\Delta_2^1]_{\SL_2\times\bG_m}$, hence if we compute this last class and we substitute $c_2$ with $c_2/4$ we get an expression of $[\Delta_2^1]_{G_2}$ that holds up to multiples of $c_3$.

The same argument of \Cref{prop:class D1 GL2} shows that
\[ [\Delta_2^1]_{\SL_2\times\bG_m}=\frac{c_{22}^{\SL_2\times\bG_m}(V^2_{8,12})}{c_5^{\SL_2\times\bG_m}(V^2_4)}=\frac{c_{9}^{\SL_2\times\bG_m}(V^2_{8})c_{13}^{\SL_2\times\bG_m}(V^2_{12})}{c_5^{\SL_2\times\bG_m}(V^2_4)}. \]
The representation $V^2_{2m}$ is equal to $\Sym^{2m}E^{\vee}\otimes L^{\otimes(-m)}$, where $E$ is the standard $\SL_2$-representation and $L$ is the standard $\bG_m$-representation (of weight one). If $\ell_1$ and $\ell_2$ denote the Chern roots of $E^{\vee}$ and $\tau_1$ is the first Chern class of $L$, we see that the Chern roots of $V^2_{2m}$ are of the form $i\ell_1+(2m-i)\ell_2-m\tau_1$, for $i=0,\ldots,2m$. As the product of the Chern roots is equal to the top Chern class, after some computations and after plugging in the relations $\ell_1+\ell_2=0$ and $\ell_1\ell_2=c_2$, we get
\begin{align}\label{eq:class D1 2} 
[\Delta_2^1]_{\SL_2\times\bG_m}=&-1019215872 (9 c_2^2 + 40 c_2 \tau_1^2 + 16 \tau_1^4) (6400 c_2^6 \tau_1 + 85904 c_2^5 \tau_1^3 \\ &+ 296296 c_2^4 \tau_1^5 + 400257 c_2^3 \tau_1^7 + 
   243243 c_2^2 \tau_1^9 + 66339 c_2 \tau_1^{11} + 6561 \tau_1^{13}). \nonumber
\end{align}
We replace $c_2$ with $\frac{1}{4}c_2$, thus obtaining 
\begin{align}\label{eq:partial}
    -995328 \tau_1 (9 c_2^2 + 160 c_2 \tau_1^2 &+ 256 \tau_1^4) (100 c_2^6 + 5369 c_2^5 \tau_1^2 \nonumber \\&+ 74074 c_2^4 \tau_1^4 + 400257 c_2^3 \tau_1^6 + 972972 c_2^2 \tau_1^8 + 1061424 c_2 \tau_1^{10} + 419904 \tau_1^{12}).
\end{align}
We deduce that $[\Delta_2^1]_{G_2}$ must be equal to the expression in $(\ref{eq:partial})$ plus an element of the form $c_3\eta$, where $\eta$ belongs to $\bZ[\tau_1,c_2,c_3]/(2)$. In particular, the class of $\Delta_2^1$ modulo $2$ is equal to $c_3\eta$.
For computing the class of $\Delta_2^1$ modulo $2$, we first find an element $\xi$ such that
\begin{equation}\label{eq:condition} \xi\cdot c_5^{\GL_3\times\bG_m} (\overline{W}^2_4) = c_{22}^{\GL_3\times\bG_m}(\overline{W}^2_{8,12}). \end{equation}
This task is accomplished by direct computations of the top Chern classes using (\ref{eq:class W}), and then reduction modulo $2$: we find a polynomial $\xi'$ such that $\xi=h\xi'$ satisfies the condition (\ref{eq:condition}), where $h=c_1^{\GL_3\times\bG_m}(\cO(1))$.
We are still not done, because by \cite[page 8]{EdFuRat} the ring $\ch^*([\bP^5/\GL_3\times\bG_m])\otimes\bZ/2$ is isomorphic to $$\bZ[\tau_1 ,c_1, c_2,c_3,h]/(2,h^3(c_1 c_2 + c_3 + c_1^2 h + c_2 h + h^3)),$$
hence the reduction modulo $2$ of 
\begin{equation}\label{eq:fraction 2}\frac{c_{22}^{\GL_3\times\bG_m}(\overline{W}^2_{8,12})}{c_{5}^{\GL_3\times\bG_m}(\overline{W}^2_{4})} \end{equation}
is equal to $\xi$ only up to annihilators of $c_{5}^{\GL_3\times\bG_m}(\overline{W}^2_{4})$. This top Chern class is equal modulo $2$ to $h^2(c_1 c_2 + c_3 + c_1^2 h + c_2 h + h^3)$, so if $\xi''$ is an annihilator of this element, it must be a multiple of $h$ (this can also be checked directly using the tautological exact sequence on $\bP^5$). This shows that the reduction modulo $2$ of (\ref{eq:fraction 2}) is divisible by $h$. As the reduction modulo $2$ of $[\Delta_2^1]_{G_2}$ is equal to (\ref{eq:fraction 2}) evaluated at $h=0$ (see \Cref{prop:class D1 PGL}), we deduce that this reduction is zero, hence $[\Delta_2^1]_{G_2}$ is equal to the expression in (\ref{eq:partial}).
\end{proof}

According to \Cref{thm:main}, we need to compute five other relations. The first two are obtained as follows: let $Z_1\subset V^2_{8,12} \times \bP V_1$ be the subscheme of triples $(A,B,p)$ where $p$ is a point of $\bP^1$ and the form $A$ (resp. the form $B$) vanishes in $p$ with order $\geq 4$ (resp. $\geq 6$). Let $\gamma_1$ be the generator of the $\PGL_2\times\bG_m$-equivariant Chow ring of $\bP V_1$ as a module over $\ch^*(\cB(\PGL_2\times\bG_m))$. Then the first two relations are given by
\[ g_{1,0}:=\pr_{1*}[Z_1]_{\PGL_2\times\bG_m},\quad g_{1,1}:=\pr_{1*}([Z_1]_{\PGL_2\times\bG_m}\cdot \pr_2^*\gamma_1),  \]
where $\pr_1$ (resp. $\pr_2$) is the projection on the first (resp. second) factor.

Formulas for these two relations are given by (\ref{eq:relations D2 for PGL2 k odd}) with $N=2$, $k=1$, $m\in\{0,1\}$ and $n=0$. To make these expressions completely explicit we have to plug in the formulas for Chern classes and Segre classes of $V_4$, $V_6$, $V_8$ and $V_{12}$, which can be extracted from \Cref{prop:chern V2m 2}. After some computations with Mathematica, we get
\begin{align}\label{eq:Z1}
   g_{1,0} &= -1152 (691 c_2^4 \tau_1 - 38005 c_2^3 \tau_1^3 + 309568 c_2^2 \tau_1^5 - 
   497520 c_2 \tau_1^7 + 124416 \tau_1^9) \\
   \label{eq:Z1gamma}
   g_{1,1} &= -1152 (30 c_2^5 - 6811 c_2^4 \tau_1^2 +
   133495 c_2^3 \tau_1^4 - 481528 c_2^2 \tau_1^6 + 327600 c_2 \tau_1^8 - 20736 \tau_1^{10})
\end{align}
Let us recall how the other three relations are obtained: let $Z_2\subset V^4_{8,12}\times\bP V_2$ be the subscheme of triples $(p_1+p_2,A,B)$ such that $p_1+p_2$ is a dimension zero subscheme of $\bP^1$ of length two and $A$ (resp. $B$) vanishes along $p_1+p_2$ with order $\geq 4$ (resp. $6$). If $h$ denotes the hyperplane section of $\bP V_2$ and $\pr_i$ the projection on the $i^{\rm th}$-factor, then the cycles
\[ g_{2,0}:=\pr_{1*}[Z_2]_{\PGL_2\times\bG_m},\quad g_{2,1}:=\pr_{1*}([Z_2]_{\PGL_2\times\bG_m}\cdot \pr_2^*h),\quad g_{2,2}:=\pr_{1*}([Z_2]_{\PGL_2\times\bG_m}\cdot \pr_2^*h^2) \]
are the three relations we are looking for.

Formulas for these relations are given in \Cref{prop:relations D2}: they correspond to the cases $N=2$, $k=2$ and $0\leq m\leq 2$. Observe that in this case the representation $V_{2d(N-k)}$ is trivial, hence the only non-zero Segre class is the one of degree zero, which is equal to one. This means that in the summation we can impose $\ell_d=0$ for $d=2,3$.

To make the formulas completely explicit, we only need to plug in the values of the Chern classes of $V_8$ and $V_{12}$ and of the Segre classes of $V_2$, which are computed as before using \Cref{prop:chern V2m 2}. After some computations with Mathematica, we get
\begin{align*}
    g_{2,0}= &-11943936 (38562300 c_2^9 - 109363770 c_2^8 \tau_1^2+ 134699250 c_2^7 \tau_1^4 - 303690446 c_2^6 \tau_1^6 + 312766535 c_2^5 \tau_1^8\\
    &-259047756 c_2^4 \tau_1^10 + 192326864 c_2^3 \tau_1^{12} -128471616 c_2^2 \tau_1^{14} + 87091200 c_2 \tau_1^{16} - 11943936 \tau_1^{18}),\\
    g_{2,1}=& 23887872 c_2 \tau_1 (37514745 c_2^8 - 64489645 c_2^7 \tau_1^2+ 97095345 c_2^6 \tau_1^4 - 170891502 c_2^5 \tau_1^6 + 142583080 c_2^4 \tau_1^8 \\
    &- 114176800 c_2^3 \tau_1^{10} + 78779520 c_2^2 \tau_1^{12} - 54743040 c_2 \tau_1^{14} + 23887872 \tau_1^{16}),\\
    g_{2,2}=&-c_2\cdot g_{2,0}.
\end{align*}
These five relations, together with the fundamental class $[\Delta_2^1]_{\PGL_2\times\bG_m}$ computed in \Cref{lemma:Delta12}, are all we need to compute the integral Chow ring of $\cW^{\min}_2$.

A quick computation with Mathematica shows that $[\Delta_2^1]_{\PGL_2\times\bG_m}$ belongs to the ideal generated by (\ref{eq:Z1}) and (\ref{eq:Z1gamma}). After further simplifying it via Mathematica, we obtain the presentation given in \Cref{thm:chow W2}.

\bibliographystyle{amsalpha}
\bibliography{bibliography}
\end{document}